\newcommand{\Addresses}{{
		\bigskip
		\footnotesize
		
		Goethe Universit\"at Frankfurt am Main, Institut f\"ur Mathematik, Robert-Mayer Strasse 6-8
		\hfill \newline\texttt{}
		\indent 60325 Frankfurt am Main, Germany} 
	\par\nopagebreak
	\textit{E-mail address}: \texttt{andreibud95@protonmail.com}
}
\DeclarePairedDelimiter\floor{\lfloor}{\rfloor}
\theoremstyle{plain}
\newtheorem{trm}{Theorem}[section]
\newtheorem{lm}[trm]{Lemma}
\newtheorem{prop}[trm]{Proposition}
\newtheorem{cor}[trm]{Corollary}
\theoremstyle{definition}
\newtheorem{defi}[trm]{Definition}
\def\mzero{\mm_{0,g/\mathrm{S}_{g-1}}}
\def\OO{\mathcal{O}}
\def\cC{\mathcal{C}}
\def\cM{\mathcal{M}}
\def\cR{\mathcal{R}}
\def\rr{\overline{\mathcal{R}}}
\def\cC{\mathcal{C}}
\def\Pic0{{\rm Pic}^0(X)}
\def\mm{\overline{\mathcal{M}}}
\begin{document}
\title{The class of the Prym-Brill-Noether divisor}

\author{Andrei Bud}
\date{}
\maketitle
\begin{abstract}
	For $r\geq 3$ and $g= \frac{r(r+1)}{2}$, we study the Prym-Brill-Noether variety $V^r(C,\eta)$ associated to Prym curves $[C,\eta]$. The locus $\mathcal{R}_g^r$ in $\mathcal{R}_g$ parametrizing Prym curves $(C, \eta)$ with nonempty $V^r(C,\eta)$ is a divisor. We compute some key coefficients of the class $[\overline{\mathcal{R}}_g^r]$ in $\mathrm{Pic}_\mathbb{Q}(\overline{\mathcal{R}}_g)$. Furthermore, we examine a strongly Brill-Noether divisor in $\overline{\mathcal{M}}_{g-1,2}$: we show its irreducibility and compute some of its coefficients in $\mathrm{Pic}_\mathbb{Q}(\overline{\mathcal{M}}_{g-1,2})$. As a consequence of our results, the moduli space $\mathcal{R}_{14,2}$ is of general type. 
\end{abstract}
\section{Introduction} 
 The study of Prym curves from an algebraic perspective was initiated by Mumford in his seminal paper \cite{MumfordPrym}. Alongside Beauville's work \cite{Beau77}, where he provides a modular interpretation of Prym curves, these contributions laid the foundation for the study of the moduli space of Prym curves. This is defined as
  $$\mathcal{R}_g \coloneqq \{[C,\eta] \ \mid \ [C] \in \cM_g, \ \eta\in \mathrm{Pic}^0(C) \ \textrm{such that} \ \eta^{\otimes 2} \cong \OO_C \},$$ 
 parametrizing pairs $(C, \eta)$ where $C$ is a smooth curve of genus $g$ and $\eta$ is a $2$-torsion line bundle of $C$. 
 
 One natural question about $\cR_g$ is computing its Kodaira dimension. This problem was the focus of several mathematicians, who described the geometry of $\mathcal{R}_g$ for almost all values of $g$. This space is rational if $2\leq g \leq 4$, see \cite{Dolgachev}, \cite{Catanese}, unirational, if $5\leq g \leq 7$, see \cite{MoriMukai}, \cite{DonagiA5}, \cite{VerraA4}, \cite{VerraA5}, \cite{Izadi}, \cite{FarVerNikulin} uniruled if $g = 8$, see \cite{FarVerNikulin} and of general type if $g\geq 13, g \neq 16$, see \cite{FarLud}, \cite{Bruns}, \cite{FarR13}. 
 
  Through the natural map $\mathcal{P}_g\colon\mathcal{R}_g \rightarrow \mathcal{A}_{g-1}$, one can relate the geometry of principally polarized Abelian varieties to the geometry of curves. For $2\leq g \leq 6$ the map $\mathcal{P}_g$ is surjective, and hence the characterization above is fundamental in understanding the birational geometry of the moduli of Prym varieties. 
 
 Similarly, we can consider the moduli space $\mathcal{R}_{g,2}$ parametrizing tuples $(C,x+y,\eta)$ where $C$ is a smooth curve of genus $g$, the points $x$ and $y$ of $C$ are distinct, and $\eta$ is a line bundle satisfying $\eta^{\otimes 2} \cong \OO_C(-x-y)$. This space comes equipped with a map $\mathcal{P}_{g,2} \colon \cR_{g,2} \rightarrow \mathcal{A}_g$. This correspondence between pointed curves and principally polarized Abelian varieties motivates the study of the birational geometry of $\cR_{g,2}$. We know that $\mathcal{R}_{g,2}$ is unirational for $3\leq g \leq 5$, uniruled for $g= 6$ and of general type if $g \geq 16$ or $ g = 13$, see \cite{Lelli-Chiesa-uniruled}, \cite{BudKodPrym} and \cite{FarR13}.

 When studying the birational geometry of $\mathcal{M}_g$, Brill-Noether Theory plays a fundamental role in showing that $\mathcal{M}_g$ is of general type when $g \geq 22$, see \cite{KodMg}, \cite{KodevenHarris1984}, \cite{EisenbudHarrisg>23} and \cite{FarPayneJensen}. When $g\geq 24$, we can consider numbers $r, d$ such that $\rho(g,r,d) \coloneqq g-(r+1)(g-d+r)  = -1$ and look at the locus of curves $[C] \in \cM_g$ for which the Brill-Noether variety $W^r_d(C)$ is nonempty. This locus is a divisor in $\mathcal{M}_g$ and the class of its closure in $\mm_g$ can be used to show that $\mathcal{M}_g$ is of general type when $g\geq 24$. For a Prym curve $[C,\eta] \in \mathcal{R}_g$ we can consider $\pi\colon \widetilde{C} \rightarrow C$ the associated double cover and look at the locus 
\[V^r(C,\eta) \coloneqq \left\{ L \in \mathrm{Pic}^{2g-2}(\widetilde{C}) \ | \ \mathrm{Nm}(L)\cong \omega_C, \ h^0(\widetilde{C}, L) \geq r+1, \ \mathrm{and} \ h^0(\widetilde{C}, L) \equiv r+1\ (\mathrm{mod} \ 2)  \right\}\]
where the norm map sends a line bundle $L \in \textrm{Pic}(\widetilde{C})$ to $\wedge^2\pi_*L \otimes \eta$. Equivalently, it sends a line bundle $\OO_{\widetilde{C}}(D)$ to $\OO_C(\pi_*D)$ for every divisor $D$ on $\widetilde{C}$. 

These Prym-Brill-Noether loci can be understood as Brill-Noether loci on $\widetilde{C}$ that take into account the involution $\iota\colon \widetilde{C} \rightarrow \widetilde{C}$ associated to the double cover $\pi\colon \widetilde{C}\rightarrow C$. These loci were introduced in \cite{Welters} to understand the singularities of Prym varieties, particularly by computing the expected dimension and describing the smooth locus of $V^r(C,\eta)$. Subsequently, it was shown that for a generic $(C,\eta)$, the locus $V^r(C,\eta)$ has the expected dimension, see \cite{Bertram}, \cite{SchwarzPrym} and is irreducible when $g >\frac{r(r+1)}{2} + 1$, see \cite{DebarreLefschetz}. Viewing $V^r(C,\eta)$ as a Lagrangian degeneracy locus, De Concini and Pragacz computed the virtual class of this locus in the Prym variety, see \cite{DeConciniPragacz}. 

In recent years, two perspectives on the study of Prym-Brill-Noether loci arose. On one hand, the tropical geometry approach was used to provide another proof for the dimension estimate of $V^r(C,\eta)$, along with many other properties, see \cite{tropicalPBN-4authors}, \cite{tropicalPBN-Len_Ulirsch}, \cite{JensenPaynesurvey} and the references therein. On the other hand, the moduli theory approach was used to understand the birational geometry of $\cR_g$ for small values of $g$, see \cite{FarVerNikulin}. Moreover, for $g = \frac{r(r+1)}{2} +1$, it was shown in \cite{BudPrymIrr} that the universal Prym-Brill-Noether locus 
\[ \mathcal{V}^r_g\coloneqq \left\{[C,\eta,L] \ | \ [C,\eta]\in \mathcal{R}_g \ \mathrm{and} \ L \in V^r(C,\eta)\right\}\]
has a unique irreducible component dominating $\cR_g$.  

For $r\geq 3$ and $ g = \frac{r(r+1)}{2}$ we consider the locus 
\[ \cR^r_g \coloneqq \left\{ [C,\eta] \in \cR_g \ | \ V^r(C,\eta) \neq \emptyset \right\} \]
It follows immediately from \cite[Theorem II]{FultonLazarsfConnected}, \cite[Example 1.4]{DebarreLefschetz} and \cite[Theorem 2.9]{Steffen} that in this case $\cR^r_g$ is a divisor in $\cR_g$. The main goal of this paper is to compute the class of the Prym-Brill-Noether divisor $\rr^r_g$ in $\mathrm{Pic}_\mathbb{Q}(\rr_g)$, where the closure $\rr_g$ is described in \cite{Casa} and \cite{FarLud}. Our main result is: 

\begin{trm} \label{maintrm} Let $r \geq 3 $ and $g = \frac{r(r+1)}{2}$. Then the class of $\rr^r_g$ in $\mathrm{Pic}_\mathbb{Q}(\rr_g)$ is equal to 
\[ [\rr^r_g] = c\cdot (a\lambda- b_0'\delta_0' - b_0''\delta_0''-b_0^{\mathrm{ram}}\delta_0^{\mathrm{ram}}- \sum_{i=1}^{g-1}b_i\delta_i - \sum_{i = 1}^{[\frac{g}{2}]}b_{i:g-i}\delta_{i:g-i}) \]
where $a = g+1$, $b_0' = \frac{g}{6}$, $b_0^{\mathrm{ram}} = \frac{g}{4}$, $b_0'' = \frac{g^2-g+2}{8}$ and $b_i = \frac{(g-i)(g+i-1)}{2}$.
\end{trm}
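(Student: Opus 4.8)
The plan is to realize the divisor class by a standard test-curve and degeneration-class computation on the moduli space of Prym curves, following the strategy that produced the Brill-Noether divisor class on $\overline{\mathcal{M}}_g$ and its Prym analogues in \cite{FarLud} and \cite{BudPrymIrr}. The first step is to construct a universal object: over a suitable open subset of $\overline{\mathcal{R}}_g$ (or over a partial compactification $\widetilde{\mathbf{R}}_g$ where the geometry is controlled), I would build the universal double cover $\pi\colon\widetilde{\mathcal{C}}\to\mathcal{C}$ together with a relative compactified Picard scheme, and inside it the relative Prym-Brill-Noether locus $\mathcal{V}^r$ cut out by the degeneracy condition $h^0\geq r+1$ with the parity constraint. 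Since $\rho$ is arranged so that $V^r(C,\eta)$ has dimension $0$ on the generic Prym curve, the forgetful map $\mathcal{V}^r\to\overline{\mathcal{R}}_g^r$ is generically finite, and $\overline{\mathcal{R}}_g^r$ is the pushforward of a degeneracy locus. The whole point is that $[\overline{\mathcal{R}}_g^r]$ is the determinant of a map of vector bundles of the same rank (the relevant Lagrangian degeneracy setup of De Concini–Pragacz, cf. \cite{DeConciniPragacz}), so its class is computed by a Chern-class formula; the factor $c$ in the statement absorbs the mapping degree and the enumerative multiplicity of the degeneracy class.

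The second, computational step is to pin down each coefficient by restricting $[\overline{\mathcal{R}}_g^r]$ to explicit test curves in $\overline{\mathbf{R}}_g$ and matching against the known intersection numbers of these families with the generators $\lambda,\delta_0',\delta_0'',\delta_0^{\mathrm{ram}},\delta_i,\delta_{i:g-i}$ of $\mathrm{Pic}_\mathbb{Q}(\overline{\mathbf{R}}_g)$. The coefficient $a$ of $\lambda$ is obtained by a Grothendieck–Riemann–Roch computation for the pushforward of the relevant tautological classes along $\pi$ and along the universal curve, expressing everything in terms of $\lambda$, $\kappa_1$, and the $\psi$-classes and then using the relation $\kappa_1=12\lambda-\delta$ on the moduli side. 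For the boundary coefficients I would degenerate to the three types of boundary divisors of $\overline{\mathbf{R}}_g$ coming from the Beauville admissible-cover description: $\delta_0'$ and $\delta_0''$ governing the two ways a non-separating node lifts, and $\delta_0^{\mathrm{ram}}$ for the ramification boundary, together with the $\delta_i$ for separating nodes and the $\delta_{i:g-i}$ for the Prym-specific boundary. On each such boundary the double cover degenerates to a nodal admissible cover, and the limit linear series / admissible-cover count for $V^r$ reduces the coefficient to a Brill-Noether number on lower-genus pieces that one evaluates via the Eisenbud–Harris theory of limit $\mathfrak g^r_d$'s and the corresponding Prym degenerations.

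The step I expect to be the genuine obstacle is the correct bookkeeping of the \emph{parity} condition $h^0\equiv r+1\ (\mathrm{mod}\ 2)$ across the boundary, which is what splits $\delta_0$ into the refined classes $\delta_0',\delta_0'',\delta_0^{\mathrm{ram}}$ and is responsible for the asymmetry among $b_0'$, $b_0''$, and $b_0^{\mathrm{ram}}$. Concretely, one must determine how a theta-characteristic-type parity jumps when the curve acquires a node of each type, i.e.\ control the local structure of the degeneracy locus $\mathcal V^r$ near each boundary stratum and extract the multiplicity with which $\overline{\mathcal R}_g^r$ meets it; the De Concini–Pragacz Lagrangian class computes the smooth enumerative count, but the boundary contributions require a careful analysis of the limiting Prym-Brill-Noether loci, including possible base-point and ramification corrections at the nodes. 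A secondary difficulty is identifying the constant $c$ unambiguously: since the theorem only asserts the coefficients $a,b_0',b_0^{\mathrm{ram}},b_0'',b_i$ up to the common factor $c$, I would fix $c$ by one independent normalization (for instance by intersecting with a test family for which the PBN-count is known exactly, such as a pencil sweeping out the class computed via \cite{DeConciniPragacz}) rather than by computing the intersection-theoretic multiplicity of the degeneracy locus directly, thereby sidestepping the hardest enumerative constant while still determining the ratios among all listed coefficients.
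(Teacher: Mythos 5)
Your second step (test curves and pullbacks to pin down coefficients) is in the same spirit as the paper, but the two mechanisms you rely on to produce the actual relations both have genuine gaps. The first is your plan to evaluate the boundary coefficients by degenerating to nodal covers and counting limit linear series ``via the Eisenbud--Harris theory.'' For all three of the coefficients $b_0'$, $b_0''$, $b_0^{\mathrm{ram}}$ the underlying stable curve has a \emph{non-separating} node, so the associated double covers are not of compact type and Eisenbud--Harris limit linear series simply do not apply there. This is exactly why the paper imports Osserman's theory of limit linear series for curves not of compact type (\cite{Ossermandim}, \cite{Ossermancompacttype}) to analyze $\Delta_0''$ (Propositions \ref{testA_1} and \ref{sBNpull}), and why it never analyzes $\Delta_0'$ by degeneration at all: the ratio $a/b_0'$ is instead obtained by pulling back along the elliptic-Prym-tail map $\mm_{g-1,1}\rightarrow\rr_g$, identifying the pullback with the pointed Brill--Noether divisor $\mm^r_{g-1,g+r-1}(\textbf{a})$ with $\textbf{a}=(0,2,\ldots,2r)$ (Proposition \ref{secondrel}), writing that divisor as $\mu\cdot[\mathcal{BN}_{g-1}]+\nu\cdot[\mathcal{W}_{g-1}]$ following \cite{FarkasCastel}, and proving $\mu=\nu$ by nontrivial generating-function identities. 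None of this is visible in your outline, and without a substitute for Osserman's theory your boundary analysis at $\delta_0'$, $\delta_0''$, $\delta_0^{\mathrm{ram}}$ stalls at the first step.

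Second, your step one --- a universal Lagrangian (parity-constrained) degeneracy locus over $\rr_g$ whose class is computed by a Chern-class/GRR formula --- is not available off the shelf, and it is precisely what the paper's architecture is designed to avoid. De Concini--Pragacz \cite{DeConciniPragacz} compute the class of $V^r(C,\eta)$ inside the Prym variety of a \emph{fixed} smooth Prym curve; to run your argument you would need this as a relative degeneracy class extending over the boundary of $\rr_g$ with controlled corrections at $\delta_0'$, $\delta_0''$, $\delta_0^{\mathrm{ram}}$, where (again) the cover is not of compact type and even the limits of the Prym variety and of the parity function are delicate. You correctly flag this as ``the genuine obstacle,'' but you propose no way around it, whereas circumventing it is the actual content of the proof. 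Concretely, your plan has no counterpart of Theorem \ref{stronglyBNdiv}: in the paper the coefficient $b_0''$ is only reached by identifying $\pi_2^*\rr^r_g$ with a new, strongly Brill--Noether divisor on $\mm_{g-1,2}$ and computing that divisor's class by a separate fixed-curve Fulton--Pragacz flag-degeneracy computation on $C\times C\times\mathrm{Pic}^{g+r}(C)$ --- that is, degeneracy-locus intersection theory enters the paper at the level of a single curve, never over the moduli space. Finally, your concern about normalizing $c$ is moot: the theorem leaves $c$ (and the $b_{i:g-i}$) undetermined, and the paper's proof accordingly only ever establishes ratios among the listed coefficients.
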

The constants $c$ and $b_{i:g-i}$ were not determined.

Theorem \ref{maintrm} provides another proof of the fact that $\cR_{15}$ is of general type, proven in \cite{Bruns}. Moreover, by pulling back this divisor to $\cR_{14,2}$ we are able to prove that

\begin{trm} \label{Kodaira} The moduli space $\mathcal{R}_{14,2}$ is of general type. 
\end{trm}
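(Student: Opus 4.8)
The strategy is to specialize Theorem~\ref{maintrm} to $r = 5$ and $g = 15$ and to transport the resulting Prym--Brill--Noether divisor $\rr^5_{15}$ onto $\rr_{14,2}$ through a boundary clutching map. Gluing the two marked points of a Prym curve $(C,x+y,\eta) \in \cR_{14,2}$, where $\eta^{\otimes 2} \cong \OO_C(-x-y)$, yields an irreducible nodal curve $C/(x\sim y)$ of arithmetic genus $15$ whose associated double cover is ramified over the node; this defines a finite map $j\colon \rr_{14,2} \to \rr_{15}$ with image the ramification boundary divisor $\Delta_0^{\mathrm{ram}}$. I would first verify that $\rr^5_{15}$ does not contain $\Delta_0^{\mathrm{ram}}$ — by exhibiting a single ramified Prym curve in the boundary with $V^5 = \emptyset$ — so that $j^{*}[\rr^5_{15}]$ is represented by the effective divisor $\rr^5_{15}\cap\Delta_0^{\mathrm{ram}}$. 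Under the identification $\Delta_0^{\mathrm{ram}}\cong\rr_{14,2}$ this is an effective divisor whose class can be read off from that of the strongly Brill--Noether divisor in $\mm_{14,2}$ studied earlier in the paper.

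The second step is the pullback of the Picard generators. I would compute $j^{*}\lambda = \lambda$, express the pullback of each boundary class of $\rr_{15}$ in terms of the generators $\lambda$, $\psi_x + \psi_y$ and the boundary divisors of $\rr_{14,2}$, and in particular establish the normal-bundle identity for the ramification divisor, of the form $j^{*}\delta_0^{\mathrm{ram}} = -\tfrac12(\psi_x+\psi_y) + (\text{boundary})$. This is the delicate point, since the cover is ramified rather than \'etale over the node and the smoothing parameter upstairs is a square root of the one downstairs. Substituting the determined coefficients $a = 16$, $b_0' = \tfrac52$, $b_0'' = \tfrac{53}{2}$, $b_0^{\mathrm{ram}} = \tfrac{15}{4}$ and $b_i = \tfrac{(15-i)(14+i)}{2}$ of Theorem~\ref{maintrm}, the $\delta_0^{\mathrm{ram}}$-term contributes a positive multiple of $\psi_x+\psi_y$, and I obtain an expression
\[ j^{*}[\rr^5_{15}] = c\big(16\lambda + B(\psi_x+\psi_y) - (\text{effective boundary})\big), \qquad c,B > 0. \]

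The final step compares this with the canonical class. Using the known formula for $K_{\rr_{14,2}}$, which has the shape $13\lambda + (\psi_x+\psi_y) - 2\delta_0' - \cdots$, I would produce nonnegative constants $\alpha,\beta,\gamma$ and nonnegative boundary coefficients realizing
\[ K_{\rr_{14,2}} = \alpha\, j^{*}[\rr^5_{15}] + \beta\lambda + \gamma(\psi_x+\psi_y) + (\text{effective boundary}). \]
Since $\lambda$ is big along the directions dominating $\mm_{14}$ while $\psi_x+\psi_y$ provides positivity along the point-moving directions, a combination $\beta\lambda + \gamma(\psi_x+\psi_y)$ with $\beta,\gamma>0$ is big on $\rr_{14,2}$; adding the effective divisor $\alpha\,j^{*}[\rr^5_{15}]$ and effective boundary then exhibits $K_{\rr_{14,2}}$ as big.

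I expect the decisive obstacle to be the normal-bundle computation above together with the existence of such nonnegative constants, the latter reducing to a slope-type inequality among the determined coefficients of $[\rr^5_{15}]$ and those of $K_{\rr_{14,2}}$ on $\delta_0',\delta_0'',\delta_0^{\mathrm{ram}}$ and the $\delta_i$. The constants left undetermined in Theorem~\ref{maintrm} are harmless here: the overall factor $c$ enters only through the ratio $\alpha/c$ and needs merely $c>0$, while each coefficient $b_{i:g-i}$ multiplies a class $\delta_{i:g-i}$ that pulls back to effective boundary of $\rr_{14,2}$ and enters $K_{\rr_{14,2}} - \alpha\,j^{*}[\rr^5_{15}]$ with the favourable sign $+\alpha c\, b_{i:g-i}\, j^{*}\delta_{i:g-i}$, so it only aids effectivity. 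To upgrade bigness of $K_{\rr_{14,2}}$ to $\rr_{14,2}$ being of general type, I would invoke the extension of pluricanonical forms across the finite quotient singularities of $\rr_{14,2}$, the analogue for $\rr_{g,2}$ of Ludwig's result for $\rr_g$, ensuring that no pluricanonical forms are lost on a desingularization.
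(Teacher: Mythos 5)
Your setup coincides with the paper's up to the final step: your map $j$ is the paper's $\iota_{14,2}\colon\rr_{14,2}\rightarrow\Delta_0^{\mathrm{ram}}\subseteq\rr_{15}$, your normal-bundle identity $j^*\delta_0^{\mathrm{ram}}=-\tfrac12(\psi_x+\psi_y)+(\textrm{boundary})$ is correct, and feeding in the coefficients of Theorem \ref{maintrm} gives exactly the class the paper records: up to a positive constant, $j^*[\rr^5_{15}]=16\lambda+\tfrac{15}{8}\psi-\tfrac52\delta_0'-\tfrac{15}{4}\delta_0^{\mathrm{ram}}-\cdots$ with $\psi=\psi_x+\psi_y$ (the paper writes this, scaled by $8$, as $15\psi+128\lambda-20\delta_0'-30\delta_0^{\mathrm{ram}}-\cdots$). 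The gap is in your last step: the decomposition you hope for does not exist, i.e.\ the slope-type inequality you defer actually fails. Write $K_{\rr_{14,2}}=13\lambda+\psi-2\delta_0'-2\delta_0''-3\delta_0^{\mathrm{ram}}-\cdots$ and try $K_{\rr_{14,2}}=\alpha\, j^*[\rr^5_{15}]+\beta\lambda+\gamma\psi+E$ with $E$ effective boundary and $\alpha,\beta,\gamma\geq0$. Nonnegativity of the $\delta_0'$-coefficient of $E$ forces $\tfrac52\alpha\geq2$, i.e.\ $\alpha\geq\tfrac45$ (the $\delta_0^{\mathrm{ram}}$-coefficient gives the same bound, $\tfrac{15}{4}\alpha\geq3$); but then the $\psi$-contribution of $\alpha\,j^*[\rr^5_{15}]$ is $\tfrac{15}{8}\alpha\geq\tfrac32>1$, so $\gamma=1-\tfrac{15}{8}\alpha\leq-\tfrac12<0$. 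A negative $\gamma$ cannot be absorbed, because $\lambda$ is not big on $\rr_{14,2}$: it is pulled back from $\rr_{14}$, hence trivial on the two-dimensional fibres of $\rr_{14,2}\rightarrow\rr_{14}$, and $\beta\lambda+\gamma\psi$ with $\gamma<0$ is negative on those fibres. So no choice of $\alpha$ works, and the single divisor $j^*[\rr^5_{15}]$ is insufficient; the large positive $\psi$-term that the ramified normal bundle forces into the pullback is precisely what kills the one-divisor argument.

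This is why the paper's proof mixes three effective divisors rather than one. It adds the pullback of the Brill--Noether divisor $[\mathcal{BN}_{14}]=34\lambda-5\delta_0'-10\delta_0^{\mathrm{ram}}-\cdots$ from $\mm_{14}$ (which carries no $\psi$-term) and the pullback $\chi_{14,2}^*[\overline{\mathcal{GP}}^3_{28,24}]=19289\psi+308624\lambda-47784\delta_0'-62470\delta_0^{\mathrm{ram}}-\cdots$ of a Gieseker--Petri divisor from $\mm_{28}$, whose $\psi$-to-$\lambda$ ratio ($\approx 0.063$) is roughly half that of $\iota_{14,2}^*[\rr^5_{15}]$ ($15/128\approx0.117$). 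The explicit positive combination $\frac{4603}{63570}[\mathcal{BN}_{14}]+\frac{1}{50856}\chi_{14,2}^*[\overline{\mathcal{GP}}^3_{28,24}]+\frac{683}{19560}\iota_{14,2}^*[\rr^5_{15}]$ equals $13\lambda-2\delta_0'-3\delta_0^{\mathrm{ram}}-\cdots$ plus $\bigl(\frac{19289}{50856}+\frac{15\cdot683}{19560}\bigr)\psi$, and since that $\psi$-coefficient is $\approx0.903<1$, the difference between $K_{\rr_{14,2}}$ and this effective combination is a positive multiple of the big and nef class $\psi$ plus effective boundary, which yields bigness of the canonical class. Your remaining points --- effectivity of the pullback, harmlessness of the undetermined constants $c$ and $b_{i:g-i}$, and the Ludwig-type extension of pluricanonical forms over the singularities --- are all fine; the missing idea is the enlarged collection of effective divisors needed to make your linear program feasible.
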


Using the numerology of Theorem \ref{maintrm}, we can intersect the Prym-Brill-Noether divisor with a pencil of Prym curves on a Nikulin surface. Because the intersection number is negative, we obtain the following result about the Nikulin locus (i.e. the locus of Prym curves lying on Nikulin surfaces) in $\mathcal{R}_g$:

\begin{cor} \label{nikulin} Let $r\geq 3$ and $g = \frac{r(r+1)}{2}$. Then the Nikulin locus is contained in the Prym-Brill-Noether divisor. 
\end{cor}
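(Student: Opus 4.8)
The plan is to produce a pencil of Prym curves sitting on a fixed Nikulin surface, compute its intersection with the divisor $\rr^r_g$ via Theorem~\ref{maintrm}, and observe that this number is strictly negative. Since an irreducible curve meeting an effective divisor negatively must be contained in it, and since such pencils sweep out a dense subset of the Nikulin locus, the full Nikulin locus (being closed) will then lie in $\rr^r_g$. Concretely, let $S$ be a general Nikulin surface of genus $g$, i.e.\ a K3 surface carrying eight disjoint $(-2)$-curves $N_1,\dots,N_8$ with $N_1+\cdots+N_8=2\mathfrak e$ and a genus-$g$ polarization $H$ with $H\cdot N_i=0$. For general $C\in|H|$ one has $C\cap N_i=\emptyset$ and $\eta_C:=\mathfrak e|_C$ is a nontrivial $2$-torsion line bundle, so $(C,\eta_C)$ is a Prym curve lying on $S$; a general pencil $\Lambda=\langle C_0,C_\infty\rangle\subset|H|$ then defines an irreducible curve $\widetilde B\subset\rr_g$ inside the Nikulin locus.

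Blowing up the $H^2=2g-2$ base points of $\Lambda$ yields a fibration $f\colon\widetilde S\to\pone$ with fibers of genus $g$, and I would first compute $\lambda\cdot\widetilde B=\chi(\OO_{\widetilde S})-\chi(\OO_{\pone})\,\chi(\OO_F)=2-(1-g)=g+1$. For the boundary the key is to sort the singular fibers. Because $N_i\cdot H=0$, each $N_i$ is contained in a fiber: exactly one member $C_i=N_i+D_i\in\Lambda$ contains $N_i$, with $D_i\in|H-N_i|$ of genus $g-1$ meeting $N_i$ in $D_i\cdot N_i=2$ points. These eight reducible fibers carry the ramification of the Nikulin double cover along $N_i$, so after contracting the $(-2)$-curve their stable Prym limit is of Wirtinger type and they contribute to $\delta_0^{\mathrm{ram}}$; the remaining singular fibers are irreducible $1$-nodal curves on which $\mathfrak e$ restricts to a nontrivial $2$-torsion bundle on the normalization, hence contribute to $\delta_0'$. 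An Euler-characteristic count $e(\widetilde S)=2g+22=2(2-2g)+n'+8\cdot 2$ gives $n'=6g+2$, so I would record
\[ \lambda\cdot\widetilde B=g+1,\qquad \delta_0'\cdot\widetilde B=6g+2,\qquad \delta_0^{\mathrm{ram}}\cdot\widetilde B=8, \]
with $\delta_0''\cdot\widetilde B=0$ and $\delta_i\cdot\widetilde B=\delta_{i:g-i}\cdot\widetilde B=0$ for all $i\geq 1$. As a consistency check, $\delta_0'\cdot\widetilde B+2\,\delta_0^{\mathrm{ram}}\cdot\widetilde B=6g+18=6(g+3)$ agrees with the classical nodal count for a K3 pencil via the relation $\pi^*\delta_0=\delta_0'+\delta_0''+2\delta_0^{\mathrm{ram}}$, the factor $2$ reflecting the $A_1$-singularity created when the $(-2)$-curve is contracted.

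Feeding these numbers into the class of Theorem~\ref{maintrm} (the undetermined terms $b_{i:g-i}$ drop out since $\delta_{i:g-i}\cdot\widetilde B=0$), I would obtain
\[ [\rr^r_g]\cdot\widetilde B=c\Big((g+1)^2-\tfrac{g}{6}(6g+2)-\tfrac{g}{4}\cdot 8\Big)=c\cdot\frac{3-g}{3}. \]
Since $\rr^r_g$ is a genuine nonzero effective divisor and $c$ is the positive multiplicity realising its class, $c>0$; as $g=\tfrac{r(r+1)}{2}\geq 6$ for $r\geq 3$, this intersection number is strictly negative. Hence $\widetilde B$ cannot meet $\rr^r_g$ in a proper (finite) intersection, so $\widetilde B\subseteq\rr^r_g$. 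Letting $S$ and $\Lambda$ vary, these pencils cover a dense subset of the Nikulin locus, and since $\rr^r_g$ is closed the whole Nikulin locus is contained in $\rr^r_g$.

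I expect the main difficulty to be the boundary bookkeeping: verifying that the eight fibers through the $N_i$ genuinely land in $\delta_0^{\mathrm{ram}}$ (rather than $\delta_0'$ or $\delta_0''$) and that they enter $\delta_0\cdot\widetilde B$ with multiplicity two, together with confirming that no honest nodal fiber is of type $\delta_0''$. A secondary point is arranging sufficient general position of the Nikulin pencil (irreducibility of $\widetilde B$, only the expected singular fibers, base points off the $N_i$, smoothness of the residual $D_i$) and pinning down the sign $c>0$, after which the negativity of $[\rr^r_g]\cdot\widetilde B$ and the containment conclusion are immediate.
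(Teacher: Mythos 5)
Your proposal is correct and takes essentially the same approach as the paper: the paper intersects $\rr^r_g$ with the pencil $\Xi_g$ of Prym curves on a Nikulin surface, quoting the intersection numbers $\Xi_g\cdot\lambda=g+1$, $\Xi_g\cdot\delta_0'=6g+2$, $\Xi_g\cdot\delta_0^{\mathrm{ram}}=8$ from Farkas--Verra rather than rederiving them as you do, and then concludes from $\Xi_g\cdot[\rr^r_g]=c\,(1-\tfrac{g}{3})<0$ exactly as you argue. One terminological slip: the eight reducible fibers are correctly assigned to $\delta_0^{\mathrm{ram}}$ (the cover is ramified at the resulting node), but they are not of ``Wirtinger type''---Wirtinger covers are those parametrized by $\delta_0''$.
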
 

This is another proof that Prym-Brill-Noether varieties do not have the expected dimension on Prym curves in the Nikulin locus. A more general version of this result, proved using the geometry of Nikulin surfaces in an essential way, appears in \cite{Lelli-Chiesa-lowgenus}. 

In order to prove Theorem \ref{maintrm}, we will consider the intersection of $\rr^r_g$ with the boundary divisor $\Delta''_0$. To understand this intersection we will work with Prym limit linear series for curves that are not of compact type. The theory developed by Osserman in \cite{Ossermandim} and \cite{Ossermancompacttype} is well-suited to tackle this problem. The norm condition on the limit linear series will substantially simplify the situation. To compute the class of $\rr^r_g$, we will have to compute the class of a strongly Brill-Noether divisor in $\mm_{g-1,2}$. 

For $x,y$ two points on a curve $C$, we consider the sequence $D_\bullet(x,y)$ of effective divisors: 
\[ 0 \leq x+y \leq 2\cdot(x+y) \leq \cdots \leq n\cdot (x+y) \leq \cdots\]
and the multivanishing sequence $\textbf{a}$:
\[ a_0 = 0 \leq a_1 = 2 \leq a_2 = 4 \leq \cdots \leq a_r = 2r  \] 
For $r \geq 3$ and $ g = \frac{r(r+1)}{2} - 1$, we consider the locus in $\cM_{g,2}$ of pointed curves $[C, x, y]$ satisfying that $C$ admits a $g^r_{g+r}$ with multivanishing sequence $\textbf{a}$ along  $D_\bullet(x,y)$. That is: 
\begin{multline*} \cM^r_{g, g+r}\text{\large (}D_\bullet, \textbf{a}\text{\large )} \coloneqq \left\{ \right. [C,x,y] \in \cM_{g,2} \ | \ \exists \ L\in W^r_{g+r}(C) \ \textrm{satisfying} \ \\  h^0\text{\Large (}C,L(-i(x+y))\text{\Large )} \geq r+1-i \ \forall \ 0\leq i \leq r \left.\right\} 
\end{multline*}

This locus has a divisorial component and we can show 

\begin{trm} \normalfont \label{stronglyBNdiv} In the notation above, the strongly Brill-Noether divisor is irreducible and its class satisfies: 
\[[\mm^r_{g, g+r}\text{\large (}D_\bullet, \textbf{a}\text{\large )}] = c\cdot(a_1\psi_1 + a_2\psi_2 + a\lambda - b_0\delta_0 - \sum_{i=0}^{g-1} b_{i, \left\{1,2\right\}}\delta_{i, \left\{1,2\right\}} - \sum_{i=1}^{g-1}b_{i,1}\delta_{i,1}) \]
where $a_1 = a_2 = \frac{g^2+g+2}{8}$, $a = g+2$, $b_0 = \frac{g+1}{6}$, $b_{i,\left\{1,2\right\}} = \frac{(g-i)(g+i+1)}{2}$ and $c = \frac{(g+1)!}{g-1}\cdot 2^{g-1}\prod_{i=1}^{r}\frac{i!}{(2i)!}$. 
\end{trm}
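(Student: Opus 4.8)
The plan is to realize $\mm^r_{g,g+r}(D_\bullet,\textbf{a})$ as the image of a degeneracy locus on a parameter space over $\mm_{g,2}$ and to compute its class by a Thom--Porteous / Grothendieck--Riemann--Roch calculation, exploiting the fact that the even multivanishing sequence $\textbf{a}=(0,2,\dots,2r)$ reduces the enumerative bookkeeping to the Lagrangian setting of De Concini--Pragacz \cite{DeConciniPragacz}. Concretely, I would work over a stack $\widetilde{\G}\to\mm_{g,2}$ parametrizing data $(C,x,y,L)$ with $L\in W^r_{g+r}(C)$ and
\[ h^0\bigl(C,L(-i(x+y))\bigr)\geq r+1-i,\qquad 0\le i\le r. \]
Since $\rho(g,r,g+r)=g$, the bare $g^r_{g+r}$-condition is vacuous on the interior, so the locus is cut out \emph{precisely} by the multivanishing, which I encode as a flag of rank conditions on the evaluation maps $\pi_*(\L(-\Gamma_i))\to(\text{jets along }\Gamma_i)$, where $\Gamma_i=i(\sigma_1+\sigma_2)$ on the universal curve. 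This flag has total expected codimension one, which accounts for the divisorial component.

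Over the interior, an infinitesimal (tangent-space) computation together with the connectedness of such degeneracy loci shows that $\widetilde{\G}$ is irreducible of expected dimension and maps generically one-to-one onto the divisor; irreducibility of $\mm^r_{g,g+r}(D_\bullet,\textbf{a})$ follows. The even spacing of $\textbf{a}$ is what makes the relevant determinantal class a Schur-$Q$-type (Lagrangian) expression, and its evaluation produces the constant $c=\frac{(g+1)!}{g-1}\,2^{g-1}\prod_{i=1}^r\frac{i!}{(2i)!}$, exactly the De Concini--Pragacz multiplicity; I would extract $c$ as the overall factor of the Porteous pushforward.

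To pin down the individual coefficients I would combine the GRR pushforward with test curves. The symmetry of $D_\bullet(x,y)$ in $x$ and $y$ forces $a_1=a_2$ and identifies the $\delta_{i,\{1\}}$- and $\delta_{i,\{2\}}$-coefficients, so it suffices to treat one marked point. The coefficients $a$ of $\lambda$ and $b_0$ of $\delta_0$ come out of the GRR computation of the Chern character of the Picard bundle $\pi_*\L$ (the universal $\tfrac1{12}$ of Riemann--Roch being responsible for $b_0=\tfrac{g+1}{6}$ and $a=g+2$ after normalizing by $c$). The value $a_1=a_2=\frac{g^2+g+2}{8}$ is then fixed by the family obtained from a fixed general $(C,y)$ with $x$ varying over $C$: on it $\lambda=\delta_0=\psi_2=0$, while $\psi_1$ and $\delta_{0,\{1,2\}}$ have classical degrees and $\Gamma\cdot D$ is the adjusted De Concini--Pragacz count of $g^r_{g+r}$'s with the prescribed multivanishing and a moving base point. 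For $i\ge 1$ the boundary coefficients $b_{i,\{1,2\}}=\frac{(g-i)(g+i+1)}{2}$ are obtained by intersecting with test curves sweeping $\delta_{i,\{1,2\}}$, namely a genus-$i$ component carrying both $x,y$ glued to a varying genus-$(g-i)$ curve, and computing the limiting count by refined limit linear series in the sense of Osserman \cite{Ossermandim}, \cite{Ossermancompacttype}. The coefficients $b_{i,1}$ are left undetermined.

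The main obstacle is the boundary analysis. The honest difficulty is to describe the limits of the multivanishing data over $\delta_0$ and over the $\delta_{i,\{1,2\}}$: one must show that $\widetilde{\G}$ admits a sensible compactification whose boundary fibers are governed by limit linear series satisfying the imposed vanishing at the nodes and at $x,y$, and that these fibers have the expected dimension, so that the closure contains no boundary divisor and its class is genuinely the Porteous pushforward. Here the even multivanishing sequence is again decisive: it rigidifies the node-smoothing analysis exactly as the norm condition does in the Prym setting, so the Osserman theory should yield transverse, finite fibers and the enumerative counts needed for $b_0$ and $b_{i,\{1,2\}}$. Verifying this transversality and carrying out the resulting limit-linear-series counts is the technical heart of the argument.
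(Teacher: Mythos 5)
Your irreducibility argument is a genuine gap. You invoke ``the connectedness of such degeneracy loci'' plus an infinitesimal computation, but there is no off-the-shelf Fulton--Lazarsfeld-type connectedness theorem for flag (multivanishing) degeneracy loci of the kind needed here, and even granting that the fibre $Z\subset C\times C\times\mathrm{Pic}^{g+r}(C)$ is connected for each generic $C$, connectedness of these one-dimensional, possibly singular fibres does not yield irreducibility of the universal divisor: monodromy could permute components of $Z$ as $C$ varies, you have not excluded divisorial components lying over proper subvarieties of $\cM_g$, and ``maps generically one-to-one'' (needed so that your Porteous class computes the divisor with multiplicity one) is asserted, not proved. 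The paper's proof has a completely different mechanism that your sketch does not replace: first, the elliptic-attaching argument of Proposition \ref{rho-2} shows that \emph{every} divisorial component of $\mm^r_{g,g+r}\text{\large(}D_\bullet,\textbf{a}\text{\large)}$ must meet $\Delta_{0,\{1,2\}}$; second, Proposition \ref{FTsimilar} shows that the pullback under $\pi\colon\mm_{g,1}\to\mm_{g,2}$ (i.e. the restriction to $\Delta_{0,\{1,2\}}$) equals the pointed Brill--Noether divisor $\mm^r_{g,g+r}(\textbf{a})$ \emph{with multiplicity one} -- this is exactly what the matching of the Fulton--Pragacz intersection number $[Z]\cdot[\Delta\times\mathrm{Pic}^{g+r}(C)]$ with the Farkas--Tarasca degree $n_{g,r,g+r,\textbf{a}}$ buys, combined with the irreducibility of the universal pointed Brill--Noether locus from \cite{IrreducibilityBN}. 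Since that restriction is reduced and irreducible, there can be only one divisorial component.

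The determination of $c$, $a$, $b_0$ and $b_{i,\{1,2\}}$ is likewise not carried out. You propose a global Porteous/GRR computation over $\mm_{g,2}$ plus limit-linear-series counts on test curves sweeping $\delta_{i,\{1,2\}}$, but you yourself concede that extending the degeneracy-locus description across the boundary (compactified Jacobians, excess boundary components, transversality of the limit fibres) is an unresolved ``technical heart''; that extension is precisely what the paper is engineered to avoid. In the paper, all of $c$, $a$, $b_0$, $b_{i,\{1,2\}}$ drop out at once from the single identity $\pi^*[\mm^r_{g,g+r}\text{\large(}D_\bullet,\textbf{a}\text{\large)}]=[\mm^r_{g,g+r}(\textbf{a})]$, because the right-hand class is known: by \cite{FarkasCastel} it is $\mu[\mathcal{BN}]+\nu[\mathcal{W}]$, and the generating-function identities following Proposition \ref{secondrel} give $\mu=\nu$. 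Two further inaccuracies: the constant $c$ is not a De Concini--Pragacz multiplicity -- without the Prym norm condition there is no Lagrangian/symplectic structure in play, the relevant formula is the Fulton--Pragacz flag determinant, and $c$ arises from the degree $n_{g,r,g+r,\textbf{a}}$; and on the test curve $A=\{[C,x,y]\}_{x\in C}$ one has $A\cdot\psi_2=1$ (not $0$) and $A\cdot\delta_{0,\{1,2\}}=1$, so the correct relation is $c\cdot(2g\,a_1-b_{0,\{1,2\}})=A\cdot[\mm^r_{g,g+r}\text{\large(}D_\bullet,\textbf{a}\text{\large)}]$; with your stated intersection numbers the value of $a_1$ would come out wrong. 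That said, your choice of this test curve and the symmetry argument for $a_1=a_2$ do coincide with the paper's final step.
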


The coefficients $b_{i,1}$ for $1\leq i \leq g-1$ were not determined.



In order to prove Theorem \ref{maintrm}, several basic Brill-Noether properties will be required. We provide these results in Section \ref{basicBN}. Next, we consider in Section \ref{testintersection}, the intersection of the divisor $\rr^r_g$ with several test curves. The interplay between the norm condition, the Brill-Noether number and limit linear series is first investigated in this section. In Section \ref{pullbacks}, we consider different pullbacks of the divisor $\rr^r_g$. These pullbacks consist of a unique non-boundary divisor and hence, will provide new relations between the coefficients of the class $[\rr^r_g]$ in $\mathrm{Pic}_\mathbb{Q}(\rr_g)$. The results in Section \ref{testintersection} and Section \ref{pullbacks} conclude Theorem \ref{maintrm} and Theorem \ref{Kodaira}. Finally in Section \ref{stronglysection} we deal with strongly Brill-Noether divisors in $\mm_{g,2}$. For a generic curve $[C]\in \cM_g$, the fibre of the strongly Brill-Noether divisor appearing in Theorem \ref{stronglyBNdiv} is one dimensional above $[C]$. We consider the locus of tuples $(x,y, L)$ satisfying $[C,x,y] \in \mm^r_{g, g+r}\text{\large (}D_{\bullet}, \textbf{a}\text{\large )}$, $L\in  \mathrm{Pic}^{g+r}(C)$ and respecting the condition in the definition of $\mm^r_{g, g+r}\text{\large (}D_{\bullet}, \textbf{a}\text{\large )}$. This is a one dimensional locus in the product space $C\times C \times \mathrm{Pic}^{g+r}(C)$ and can be realized as a flag degeneracy locus. We use the Fulton-Pragacz determinantal formula to compute the intersection of this locus with the divisors $\Delta \times \mathrm{Pic}^{g+r}(C)$ and $C\times \left\{p\right\} \times \mathrm{Pic}^{g+r}(C)$. This gives us the irreducibility , together with a relation between its coefficients in $\mathrm{Pic}_\mathbb{Q}(\mm_{g,2})$ that will allow us to compute the coefficients of $\psi_1$ and $\psi_2$. The intersection of $\mm^r_{g, g+r}\text{\large (}D_{\bullet}, \textbf{a}\text{\large )}$ with the boundary divisor $\Delta_{0,\left\{1,2\right\}}$ is easy to understand and will be used to conclude Theorem \ref{stronglyBNdiv}.

\textbf{Acknowledgements:} This paper was part of my PhD project at Humboldt Universit\"at zu Berlin. I am very grateful to Gavril Farkas for suggesting this topic, and to Andr\'es Rojas for the helpful comments on this paper. I am also thankful to the anonymous referee for the diligent reading of this paper and for the many suggested improvements. The author acknowledges support by Deutsche Forschungsgemeinschaft
(DFG, German Research Foundation) through the Collaborative Research
Centre TRR 326 \textit{Geometry and Arithmetic of Uniformized Structures}, project number 444845124.

 \section{Some basic Brill-Noether properties} \label{basicBN}
 
 To understand the intersection of $\overline{\mathcal{R}}^r_g$ with different divisors, we will require several well-know Brill-Noether properties, that we will recall in this section. We start by reviewing some basic definitions about linear series. 
  
 In their seminal work on Brill--Noether Theory (see \cite{limitlinearbasic}), Eisenbud and Harris wanted to study line bundles of a given degree possessing numerous global sections. To accomplish this, they used the concept of a linear series:
 
 \begin{defi} Let $C$ be a smooth curve of genus $g$. A linear series $g^r_d$ on $X$ is a pair $l = (L, V)$ where $L \in \textrm{Pic}^d(C)$ is a degree $d$ line bundle and $V\subseteq H^0(C,L)$ is an $(r+1)$-dimensional subspace of the space of global sections on $L$. The variety parametrizing all $g^r_d$'s on a curve $C$ is denoted $G^r_d(C)$ 
 \end{defi}
 
 Having some points $x_1, \ldots, x_n$ on $C$, it is natural to look at their vanishing orders with respect to linear series. 
 
 \begin{defi}
Let $\textbf{a}$ be a sequence $0\leq a_0< \cdots < a_r \leq d$. We say that a $g^r_d$, denoted $(V,L)$, has vanishing orders given by $\textbf{a}$ at a point $x$ if there exists a basis of $V$ such that the vanishing orders at $x$ of that basis are the $a_i$'s for $0\leq i \leq r$. 
 \end{defi}

 If $\textbf{a}^1, \ldots, \textbf{a}^n$ are ramification profiles, we define the Brill-Noether number to be 
 \[ \rho(g,r,d,\textbf{a}^1,\ldots,\textbf{a}^n) = g-(r+1)(g-d+r)- \sum_{j=1}^{n}\sum_{i=0}^{r}a^j_i + n\cdot \frac{r(r+1)}{2}\]
 
The main result of \cite{limitlinearbasic} is a partial compactification of the space of linear series to curves that are of compact type. For a definition of limit linear series, we refer the reader to \cite{limitlinearbasic}.
 
 We have the following results, compiled from \cite[Theorem 1.1]{EisenbudHarrisg>23} and \cite[Proposition 1.4.1]{FarkasThesis}.
 \begin{lm}\label{bnLemma} On a genus $g$ curve, we consider limit linear $g^r_d$'s having vanishing profiles $\textbf{a}^1, \ldots, \textbf{a}^n$ at $n$ marked points. Then we have the following: \\
 I) If $g=0$ and $[R,x_1,\ldots,x_n] \in \mm_{0,n}$ admits such a limit $g^r_d$, then the Brill-Noether number is positive, i.e: 
 \[ \rho(0,r,d,\textbf{a}^1,\ldots,\textbf{a}^n) \geq 0\]
 II) If $g=1, n=1$ and $[E,x] \in \mm_{1,1}$ admits such a $g^r_d$, then
  \[ \rho(1,r,d,\textbf{a}) \geq 0\]
 III) If $g=1, n=2$ and $[E,x,y] \in \mm_{1,2}$ admits such a $g_d^r$, then 
 \[ \rho(1,r,d,\textbf{a}^1, \textbf{a}^2) \geq -r\]
 IV) Let $\mathcal{W} \subseteq \cM_{2,1}$ be the Weierstrass divisor. If $g=2, n= 1$ and $[C,x] \in \cM_{2,1}\setminus \mathcal{W}$ admits such a $g^r_d$, we have: 
  \[ \rho(2,r,d,\textbf{a}) \geq 0\]
 V) If $g=2, n =1 $ and $[C,x] \in \partial\mm_{2,1}$ is a generic point of a boundary component admitting such a limit $g^r_d$, then: 
 \[ \rho(2,r,d,\textbf{a}) \geq 0 \] \color{black}
 VI) If $[C,x_1,\ldots, x_n] \in \cM_{g,n}$ is generic and admits such a $g^r_d$, then 
  \[\rho(g,r,d,\textbf{a}^1,\ldots,\textbf{a}^n) \geq 0\]
 \end{lm}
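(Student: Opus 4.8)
The plan is to read all six items as nonexistence (upper bound) statements on ramification, and to split the verification by genus: parts I--III I would prove directly from Riemann--Roch and a dimension count of the vanishing filtration, parts IV--V from a refined genus-$2$ analysis in which the Weierstrass hypothesis is decisive, and part VI from the Eisenbud--Harris pointed Brill--Noether theorem. The bounds are exactly those recorded in \cite[Theorem 1.1]{EisenbudHarrisg>23} and \cite[Proposition 1.4.1]{FarkasThesis}, and the limit linear series background is that of \cite{limitlinearbasic}.

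For I, I would invoke the Pl\"ucker (Wronskian) formula: the total ramification of a $g^r_d=(V,L)$ on a smooth genus $g$ curve is $(r+1)d+r(r+1)(g-1)$, which for $g=0$ equals $(r+1)(d-r)$. Since the weights $w^j=\sum_i(a^j_i-i)$ at the marked points are part of this total, $\sum_j w^j\le (r+1)(d-r)$, and substituting into $\rho(0,r,d,\mathbf{a}^1,\dots,\mathbf{a}^n)=(r+1)(d-r)-\sum_j w^j$ gives $\rho\ge 0$. For the genus-$1$ items I would argue directly. Writing $V_i=\{s\in V:\operatorname{ord}_x s\ge a_i\}$, one has $\dim V_i=r+1-i$ and $V_i\subseteq H^0(E,L(-a_ix))$; Riemann--Roch on $E$ gives $h^0(L(-a_ix))=d-a_i$ whenever $d-a_i\ge 1$, so $a_i-i\le d-r-1$ for $i\le r-1$, while the top order satisfies $a_r\le d$. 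Summing the weights yields $w^x\le (r+1)d-r(r+2)$, hence $\rho(1,r,d,\mathbf{a})\ge 0$, which is II. For III, with points $x,y$ and subspaces $V^1_i,V^2_j$ of dimensions $r+1-i$ and $r+1-j$, the intersection $V^1_i\cap V^2_j$ is nonzero once $i+j\le r$; taking $i+j=r$ forces a nonzero section of $L(-a^1_ix-a^2_{r-i}y)$, so $a^1_i+a^2_{r-i}\le d$. Summing over $i=0,\dots,r$ gives $\sum_i a^1_i+\sum_i a^2_i\le (r+1)d$, so $w^1+w^2\le (r+1)(d-r)$, and substituting into the Brill--Noether number produces exactly $\rho(1,r,d,\mathbf{a}^1,\mathbf{a}^2)\ge -r$.

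The genus-$2$ cases IV and V are the first real obstacle. Here Riemann--Roch reads $h^0(M)=\deg M-1+h^0(\omega_C-M)$, so the clean bound $h^0(L(-a_ix))=d-a_i-1$ fails precisely for the top vanishing orders with $a_i\ge d-2$, where $h^1(L(-a_ix))=h^0(\omega_C-L+a_ix)\neq 0$. The crux is to cap how far these top sections can vanish at $x$, and this is where $[C,x]\notin\W$ enters: on a genus-$2$ curve the only degree-$2$ bundle with two sections is $\omega_C$ (the hyperelliptic $g^1_2$), and a section of $\omega_C$ vanishes to order $2$ at $x$ only if $2x\sim\omega_C$, i.e.\ if $x$ is a Weierstrass point. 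Excluding $\W$ therefore bounds the top orders and recovers $\rho(2,r,d,\mathbf{a})\ge 0$. For V, I would replace the smooth curve by a generic point of a boundary component of $\partial\mm_{2,1}$ and pass to Eisenbud--Harris limit linear series: the limit $g^r_d$ restricts to genuine series with node-ramification on the genus-$1$ and genus-$0$ components, the compatibility $a_i(p)+a_{r-i}(p)\ge d$ holds at each node, and the per-component bounds from I--III combine through additivity of the adjusted Brill--Noether number to give $\rho\ge 0$.

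Finally, VI is the pointed Brill--Noether theorem, which I expect to be the hardest input and would deduce from \cite{EisenbudHarrisg>23}. The mechanism is again additivity: degenerating $[C,x_1,\dots,x_n]$ to a compact-type curve and using $a_i(p)+a_{r-i}(p)\ge d$ at each node yields $\rho_{\mathrm{global}}\ge\sum_v\rho_v$, and for a degeneration in which every component carries controllable node-ramification each contribution is forced nonnegative by the genus-$0$ and genus-$1$ estimates of I--III, so existence of the series forces $\rho(g,r,d,\mathbf{a}^1,\dots,\mathbf{a}^n)\ge 0$ on the general pointed curve. The difficulty concentrated in IV and VI is controlling the top of the vanishing filtration---in genus $2$ through the non-Weierstrass hypothesis, and in the general case through a degeneration chosen so that no component accumulates too much ramification---after which all six bounds follow by linear bookkeeping of weights.
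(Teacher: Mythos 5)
The first thing to say is that the paper does not prove this lemma at all: it is explicitly a compilation, cited from \cite[Theorem 1.1]{EisenbudHarrisg>23} and \cite[Proposition 1.4.1]{FarkasThesis}, with the limit linear series background deferred to \cite{limitlinearbasic}. Your proposal therefore differs from the paper simply by supplying arguments where the paper supplies references, and those arguments are essentially the standard ones behind the cited results. Your I--III are correct and complete for smooth curves: the Pl\"ucker bound $\sum_j w^j \le (r+1)(d-r)$ is literally equivalent to $\rho \ge 0$ in genus $0$; the genus-$1$ count via $h^0(E, L(-a_ix)) = d-a_i$ gives II after the weight bookkeeping; and the intersection argument $V^1_i \cap V^2_{r-i} \neq 0$, hence $a^1_i + a^2_{r-i} \le d$, summed over $i$, yields exactly $\rho \ge -r$, which is how \cite[Proposition 1.4.1]{FarkasThesis} is proved. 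Your identification of the crux of IV is also right, and the case analysis does close: for $a_i \le d-3$ one gets $a_i - i \le d-r-2$ cleanly, the only indices that can exceed this are $a_{r-1} = d-2$ (forcing $L(-(d-2)x) \cong \omega_C$, since a degree-$2$ bundle on a genus-$2$ curve with $h^0 \ge 2$ is canonical) and $a_r \in \{d-1, d\}$, and the dangerous coincidences --- $a_{r-1} = d-2$ together with $a_r = d$ --- force $\omega_C \cong \OO_C(2x)$, i.e.\ $[C,x] \in \W$; with $\W$ excluded the worst admissible configuration is $(a_{r-1}, a_r) = (d-2, d-1)$, which gives $\rho \ge 0$ exactly. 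What your reconstruction buys over the paper's citation is a self-contained and checkable proof; what the citation buys is brevity and, for VI, the full strength of the Eisenbud--Harris pointed Brill--Noether theorem, which your sketch also (correctly) does not attempt to reprove from scratch.

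Two patches are needed before your proposal is a complete proof. First, parts I and II are stated for arbitrary points of $\mm_{0,n}$ and $\mm_{1,1}$, so nodal curves are allowed; your smooth-curve arguments must be supplemented by the additivity-plus-induction mechanism you only invoke in VI (routine, but it should be said, since the paper applies I and II precisely to aspects of limit series on reducible curves). Second, your treatment of V is too quick on the component $\Delta_0$ of $\partial\mm_{2,1}$: its generic point is an irreducible one-nodal curve, whose semistable model $E \cup R$ (a rational bridge meeting $E$ at two points) is \emph{not} of compact type, so the Eisenbud--Harris compatibility-plus-additivity mechanism you invoke does not literally apply there. One either argues on the normalization --- a generic two-pointed elliptic curve, where genericity of $x-y$ (non-torsion) upgrades your bound from III to $\rho \ge 0$, since $a^1_i + a^2_{r-i} = d$ for two distinct indices would force a torsion relation $(a^1_i - a^1_j)(x-y) \sim 0$ --- or one uses a theory of limit linear series beyond compact type as in \cite{Ossermandim} and \cite{Ossermancompacttype}, which is exactly what the paper itself does for a curve of this shape in Proposition \ref{testA_1}. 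With these two repairs your proposal is correct and, if anything, more informative than the paper's treatment of the lemma.
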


Our next goal is to understand the Brill-Noether theory of Prym curves. For this, we provide a pointed version of the main result in \cite{SchwarzPrym}. As in \cite{BudKodPrym}, we denote by $\mathcal{C}^n\mathcal{R}_g\coloneqq \mathcal{R}_g\times_{\mathcal{M}_g}\cM_{g,n}$ the moduli space parametrizing tuples $[C,x_1,\ldots,x_n, \eta]$ where $[C, \eta] \in \mathcal{R}_g$ and $x_1, \ldots, x_n \in C$. We have:
\begin{prop} \label{pointedSchwarz}
	For a generic pointed Prym curve $[C,x,\eta] \in \mathcal{C}^1\mathcal{R}_g$, let $\widetilde{C}\rightarrow C$ be the associated double cover and let $\widetilde{x}_1,\widetilde{x}_2 \in \widetilde{C}$ the two points in the preimage of $x$.   
	
	We consider some integers $r, d$ and some vanishing profiles $\textbf{a}^1, \textbf{a}^2$ such that the condition 
	\[\rho(g,r,d,\textbf{a}^1, \textbf{a}^2) <-r\] 
	is satisfied. Then $\widetilde{C}$ does not admit a $g^r_d$ with ramification profiles $\textbf{a}^1$ and $\textbf{a}^2$ at $\widetilde{x}_1,\widetilde{x}_2$.    
\end{prop}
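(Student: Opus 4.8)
The plan is to reduce this Prym Brill-Noether statement about the double cover $\widetilde{C}$ to a genericity/dimension count on $\mathcal{R}_g$, using a degeneration of the generic pointed Prym curve. The cover $\widetilde{C} \to C$ is étale of degree two, so $\widetilde{C}$ has genus $2g-1$, and a $g^r_d$ on $\widetilde{C}$ with prescribed ramification at the conjugate pair $\widetilde{x}_1, \widetilde{x}_2$ is an ordinary (pointed) Brill-Noether datum on a genus $2g-1$ curve. The subtlety, and the reason the bound is $-r$ rather than $0$, is that $\widetilde{C}$ is not a \emph{generic} curve of its genus: it carries the free involution $\iota$, and the two marked points are exchanged by $\iota$. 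So I cannot simply quote part VI) of Lemma \ref{bnLemma}. Instead I would exploit the Prym structure directly.

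**First I would** set up a degeneration to compute dimensions. The natural approach, following \cite{SchwarzPrym}, is to specialize $[C,x,\eta]$ to a suitable stable Prym curve whose associated admissible cover $\widetilde{C}_0 \to C_0$ degenerates into a chain or tree of rational/elliptic components, so that the putative limit linear series on $\widetilde{C}_0$ is governed by a limit Prym-Brill-Noether calculation. The point of degenerating is that on the special fibre the existence of a limit $g^r_d$ with the prescribed vanishing forces, via the additivity of Brill-Noether numbers over the dual graph together with the parts I)–III) of Lemma \ref{bnLemma}, a lower bound on the total $\rho$. The involution $\iota$ pairs up the components of $\widetilde{C}_0$ lying over a fixed component of $C_0$, and the norm/conjugation symmetry means the contributions come in symmetric pairs; the single "defect" of $-r$ arises from one component (the elliptic or rational bridge carrying the two conjugate marked points) where the genus-one two-pointed estimate III) only gives $\rho \geq -r$ rather than $\rho \geq 0$.

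**Concretely, the key steps in order are:** (i) describe a one-parameter smoothing family whose generic fibre is $[\widetilde{C},\widetilde{x}_1,\widetilde{x}_2]$ and whose special fibre $\widetilde{C}_0$ is a nodal curve admitting the involution $\iota_0$ compatible with $\eta$; (ii) argue, by upper semicontinuity, that if the generic $\widetilde{C}$ admitted a $g^r_d$ with ramification profiles $\textbf{a}^1,\textbf{a}^2$ at $\widetilde{x}_1,\widetilde{x}_2$, then $\widetilde{C}_0$ would admit a corresponding limit linear series with the same imposed vanishing; (iii) run the Eisenbud–Harris additivity of Brill-Noether numbers across the dual graph of $\widetilde{C}_0$, applying the component-wise non-negativity from Lemma \ref{bnLemma} I), II), IV), V) and the two-pointed elliptic estimate III) on the distinguished bridge component; (iv) sum the component inequalities to conclude $\rho(g,r,d,\textbf{a}^1,\textbf{a}^2) \geq -r$, contradicting the hypothesis $\rho < -r$.

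**The hard part will be** step (i)–(iii): choosing the degeneration of the Prym curve so that the special fibre is both sufficiently general (so each component realizes the relevant genericity hypothesis of Lemma \ref{bnLemma}) and genuinely carries the compatible involution with the correct étale double-cover structure, and then bookkeeping the ramification profiles of the limit linear series across the nodes so that the additivity formula closes up with exactly one $-r$ defect. Controlling where the "$-r$" is concentrated — ensuring it comes only from the single two-pointed genus-one estimate and that all other components contribute non-negatively — is the delicate point; everything else is a matter of carefully transcribing the smoothing argument of \cite{SchwarzPrym} into the pointed setting.
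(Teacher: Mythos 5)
Your overall strategy is indeed the paper's: specialize the pointed Prym curve into the boundary, replace the putative $g^r_d$ on $\widetilde{C}$ by a limit linear series on the special fibre of the cover, and combine Eisenbud--Harris additivity with the component-wise estimates of Lemma \ref{bnLemma}, with a single $-r$ defect coming from part III. Concretely, the paper degenerates to a generic point $[X\cup_{y\sim p}E,\,x,\,\OO_X,\,\eta_E]$ of $\Delta_1\subseteq \overline{\mathcal{C}^1\cR}_g$, with the marked point $x$ on the genus $g-1$ component $X$ where the torsion is \emph{trivial}, and with $\eta_E\neq \OO_E$ on the elliptic tail; the associated cover is then the chain $X_1\cup_{y_1\sim p_1}\widetilde{E}\cup_{p_2\sim y_2}X_2$, where $X_1,X_2$ are copies of the generic two-pointed curve $[X,x,y]$ carrying the marked points $x_1,x_2$, and $\widetilde{E}$ is the connected double cover of $E$ carrying only the two nodes $p_1,p_2$.

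There is, however, a genuine error in your bookkeeping, and it sits exactly at the point you yourself flag as delicate. You place the two conjugate marked points on the bridge and claim the $-r$ comes from applying the two-pointed genus-one estimate III to that bridge. This cannot work: the bridge by definition also carries the two conjugate nodes joining it to the two halves of the cover, so if it carried $\widetilde{x}_1,\widetilde{x}_2$ as well it would be a genus-one curve with \emph{four} special points; part III is a statement about $\mm_{1,2}$ and does not apply, and no part of Lemma \ref{bnLemma} bounds a four-pointed elliptic curve (a priori the deficiency there could be worse than $-r$, since both the node pair and the marked-point pair differ by torsion, being swapped by the deck transformation). Note also that $\widetilde{x}_1,\widetilde{x}_2$ lie over $x$, so they land on the bridge precisely when $x$ specializes onto the component where $\eta$ restricts nontrivially --- the one placement you must avoid (and a rational bridge cannot occur in a compact-type Prym degeneration at all, since $\eta$ must be nontrivial on a positive-genus component). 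The correct choice is the opposite one: $x$ specializes to the trivial side, so each outer component is a generic two-pointed genus $g-1$ curve contributing $\rho\geq 0$ by part VI (which is missing from your list I, II, IV, V), while the bridge $\widetilde{E}$ has exactly the two nodes $p_1,p_2$ as special points; these differ by a $2$-torsion point of $\widetilde{E}$, which is precisely why only the weak bound $\rho\geq -r$ of part III holds there, and why the additivity argument closes at exactly $-r$.
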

\begin{proof} We consider the map $\chi^1_g\colon \mathcal{C}^1\mathcal{R}_g \rightarrow \cM_{2g-1,2/S_2}$ sending $[C,x,\eta]$ to $[\widetilde{C},\widetilde{x}_1+\widetilde{x}_2]$. This map can be extended to a map 
	\[ \chi^1_g\colon \overline{\mathcal{C}^1\cR}_g\rightarrow \mm_{2g-1,2/S_2} \]
where the compactification of $\mathcal{C}^1\mathcal{R}_g$ is as in \cite[Section 6]{BudKodPrym}. We consider $[X\cup_{y\sim p}E, x, \OO_X, \eta_E]$ a generic point in the boundary divisor $\Delta_1$ of $\overline{\mathcal{C}^1\cR}_g$. The image of this point through $\chi_g$ is $[X_1\cup_{y_1\sim p_1} \widetilde{E} \cup_{p_2\sim y_2} X_2, x_1, x_2]$ where $[X_1, x_1, y_1]$ and $[X_2, x_2, y_2]$ are two copies of the generic curve $[X,x,y] \in \cM_{g-1,2}$ and $[\widetilde{E},p_1,p_2]$ is the associated double cover of $[E,p,\eta_E]$, that is $p_1$ and $p_2$ are the points in the preimage of $p$ for the associated double cover. 

If we assume the proposition to be false, we get that $[X_1\cup_{y_1\sim p_1} \widetilde{E} \cup_{p_2\sim y_2} X_2, x_1, x_2]$ admits a limit $g^r_d$ having ramification profiles $\textbf{a}^1$ and $\textbf{a}^2$ at $x_1$ and $x_2$. We denote by $l_1, l_2$ and $l_{\widetilde{E}}$ the aspects of this limit linear series. 

Using the additivity of the Brill-Noether numbers, see \cite[Proposition 4.6]{limitlinearbasic}, together with III and VI of Lemma \ref{bnLemma}, we obtain the contradiction 
\[ -r>\rho(g,r,d,\textbf{a}^1,\textbf{a}^2) \geq \rho(l_1,x_1,y_1) + \rho(l_2,x_2,y_2) + \rho(l_{\widetilde{E}},p_1,p_2) \geq   0+0+(-r) = -r.\]
\end{proof}

To understand how Prym-Brill-Noether loci degenerate to the boundary component $\Delta_0''$, we will require the study of multivanishing orders (with respect to a chain of divisors).

\begin{defi}
Let $l = (L, V)$ be a $g^r_d$ on $C$ and let $\textbf{D}$ be a chain of effective divisors on $C$:
\[ 0 = D_0 < D_1 <\cdots < D_k \]
satisfying $\deg(D_k) > d$. We say that a section $s \in V$ has \emph{multivanishing order~$\deg(D_i)$} with respect to $\textbf{D}$ if 
\[ s \in V\cap H^0(C,L-D_i) \ \textrm{and} \ s \not\in V\cap H^0(C,L-D_{i+1}). \]
 
As before, there are exactly $r+1$ multivanishing orders, giving a
\emph{multivanishing sequence}
\[a^{\ell} (\textbf{D}): 0\leq a_0^{\ell}(\textbf{D}) < a_1^{\ell}(\textbf{D})\cdots < a_r^{\ell}(\textbf{D}) \leq d\]
with respect to~$\textbf{D}$.
\end{defi}

Notice that in this situation, there can exist multiple independent sections having the same multivanishing order $\deg(D_i)$. In fact, there can exist at most $\deg(D_{i+1}) - \deg(D_i)$ such sections. 

Let $\textbf{a}$ be a sequence $0\leq a_0 \leq a_1\leq\cdots \leq a_r \leq d$ and let $r_i$ be the number of times that $i$ appear in this sequence. In this case, the Brill-Noether number is defined as 

\[ \rho(g,r,d,\textbf{a}) = g-(r+1)(g-d+r)- \sum_{j=1}^{n}\sum_{i=0}^{r}a_i + \cdot \frac{r(r+1)}{2} - \sum_{i=0}^d \binom{r_i}{2}.\]

This number represent the expected dimension for the variety parametrizing $g^r_d$'s with multivanishing orders $\textbf{a}$ with respect to a chain of divisors $\textbf{D}$. When this number is negative, a generic pointed curve does not admit such $g^r_d$'s, see \cite{Ossermancompacttype}. If all the Brill-Noether varieties of $g^r_d$'s respecting a multivanishing condition are of expected dimension for a pointed curve $[C, x_1,\ldots, x_n]$ we call the pointed curve \textit{strongly Brill-Noether general}.
\section{Intersection with test curves} \label{testintersection}
A standard way of obtaining relations between the coefficients of a divisor is to intersect it with different test curves. One way to obtain test curves on the moduli space $\rr_g$ is to pullback known test curves in $\mm_g$. This approach was already employed in \cite{FarLud}, \cite{Carlos}, \cite{Bud-adm} and \cite{Bud-newdiv}. We start by defining the test curves we will use in this section. 

Let $[X,p]$ be a generic genus $g-1$ pointed curve. The test curve $A$ in $\mm_g$ is obtained by glueing at the point $p$ an elliptic pencil along a base point. Pulling-back the test curve $A$ to $\rr_g$ we obtain three test curves $A_1$, $A_{g-1}$ and $A_{1:g-1}$ contained in the boundary divisors $\Delta_1$, $\Delta_{g-1}$ and $\Delta_{1:g-1}$ respectively.

Let $g = \frac{r(r+1)}{2}$ and $\cR_g^r$ the locus parametrizing curves $[C,\eta]$ for which $V^r(C,\eta)$ is non-empty. We denote by $\rr^r_g$ the closure of this locus in $\rr_g$. We consider the map $\chi_g\colon \rr_g \rightarrow \mm_{2g-1}$ sending a Prym curve $[C,\eta]$ to the associated double cover $\widetilde{C}$ of $C$. Using this map, we prove:
\begin{prop} \label{testA_g-1}
	We have the intersection number  $A_{g-1} \cdot \rr_g^r = 0$.
\end{prop} 
\begin{proof}
	By definition, we have that $\rr^r_g \subseteq \chi_g^{-1}(\mm_{2g-1,2g-2}^r)$. 
	To conclude our proposition, it is enough to show that the curves in $\chi_g(A_{g-1})$ do not admit any limit $g^r_{2g-2}$. 
		
	The fact that  $\chi_g(A_{g-1})$ and $\mm_{2g-1,2g-2}^r$ do not intersect follows from Proposition \ref{pointedSchwarz} and part II of Lemma \ref{bnLemma}. The conclusion follows from the additivity of Brill-Noether numbers. 
\end{proof}

We also have that the test curve $A_1$ and the divisor $\rr^r_g$ do not intersect. However, the proof is more involved due to the following fact: If we look at the element in the intersection of $A_1$ and $\Delta_0^{\mathrm{ram}}$, the associated double cover is of pseudo-compact type but not of compact type. However, we can describe this double cover, and use the theory of limit linear series for curves not of compact type to conclude that the curve does not admit a limit $g^r_{2g-2}$. We refer the reader to \cite{Ossermandim} and \cite{Ossermancompacttype} for more details on limit linear series for curves not of compact type.
\begin{prop} \label{testA_1}
	We have the intersection number $A_1\cdot \rr_g^r = 0$.
\end{prop}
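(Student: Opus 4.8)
The plan is to prove the stronger statement that the curve $A_1$ is disjoint from $\rr^r_g$, which immediately yields $A_1 \cdot \rr^r_g = 0$. As in Proposition \ref{testA_g-1}, I would use the inclusion $\rr^r_g \subseteq \chi_g^{-1}(\mm_{2g-1,2g-2}^r)$ and reduce to showing that no member of the family $\chi_g(A_1)$ admits a limit $g^r_{2g-2}$. The first step is to identify these double covers. For a general member $[X\cup_p E, \eta] \in A_1$ --- with $\eta$ trivial on $X$ and equal to a nontrivial $2$-torsion bundle on the elliptic tail $E$ --- the associated double cover is the compact-type chain $\widetilde C = X_1 \cup_{\tilde p_1} \widetilde E \cup_{\tilde p_2} X_2$, where $X_1, X_2$ are two copies of the generic pointed curve $[X,p]$ and $[\widetilde E, \tilde p_1, \tilde p_2]$ is the connected \'etale double cover of $E$ marked at the two preimages of $p$.

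On this compact-type curve I would run the same argument as in Proposition \ref{pointedSchwarz}. Since $g = \frac{r(r+1)}{2}$, one computes $\rho(2g-1, r, 2g-2) = (2g-1) - (r+1)^2 = -(r+2)$. If a limit $g^r_{2g-2}$ existed, the additivity of Brill--Noether numbers \cite[Proposition 4.6]{limitlinearbasic} would force
\[ -(r+2) = \rho(2g-1, r, 2g-2) \ge \rho(l_{X_1}, \tilde p_1) + \rho(l_{\widetilde E}, \tilde p_1, \tilde p_2) + \rho(l_{X_2}, \tilde p_2) \ge 0 + (-r) + 0, \]
where the two genus-$(g-1)$ aspects satisfy $\rho \ge 0$ by Lemma \ref{bnLemma}(VI) (the curves $X_i$ are generic) and the genus-$1$ aspect satisfies $\rho \ge -r$ by Lemma \ref{bnLemma}(III). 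As $-(r+2) \ge -r$ is false, the general member of $A_1$ admits no limit $g^r_{2g-2}$.

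The main obstacle is the finitely many special members lying in $A_1 \cap \Delta_0^{\mathrm{ram}}$, where the elliptic tail degenerates to a nodal cubic carrying the ramification-type Prym structure. Here the double cover $\widetilde C$ acquires a non-separating node and is only of pseudo-compact type, so Eisenbud--Harris theory no longer applies and I would instead appeal to Osserman's theory of limit linear series for curves not of compact type \cite{Ossermandim}, \cite{Ossermancompacttype}. The decisive simplification is the norm condition: it forces the two sheets of the cover to be interchanged by the involution, rigidifying the linear series across the loop and reducing the Brill--Noether bookkeeping, once more, to the two generic genus-$(g-1)$ aspects on $X_1, X_2$ together with a controlled contribution from the rational locus over the degenerate tail.

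I would then describe this pseudo-compact-type cover explicitly and establish the analogue of the additivity inequality in Osserman's framework, obtaining the same numerical contradiction: the copies of $[X,p]$ contribute $\rho \ge 0$ by Lemma \ref{bnLemma}(VI), while the ramified rational part should contribute at worst $-r$ after recording the vanishing imposed at the non-separating node, again contradicting $\rho_{\mathrm{total}} = -(r+2)$. The delicate point --- and the reason this case is genuinely harder than Proposition \ref{testA_g-1} --- is checking that the ramification at the loop node is correctly encoded in the multivanishing sequences and that additivity persists in the pseudo-compact-type setting, which is exactly where the norm condition must be used in an essential way. Together the two cases give $\chi_g(A_1) \cap \mm_{2g-1,2g-2}^r = \emptyset$, hence $A_1 \cdot \rr^r_g = 0$.
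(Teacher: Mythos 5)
Your handling of the compact-type members of $A_1$ coincides with the paper's: the cover is the chain $X_1\cup_{\tilde p_1}\widetilde E\cup_{\tilde p_2}X_2$, and parts III and VI of Lemma \ref{bnLemma} together with additivity of Brill--Noether numbers give the contradiction $-r-2=\rho(2g-1,r,2g-2)\geq 0+(-r)+0$. The genuine gap is that for the degenerate members --- which, as the paper itself stresses, are the entire point of this proposition and the reason it is harder than Proposition \ref{testA_g-1} --- you do not give an argument at all: you state that you \emph{would} describe the cover, \emph{would} establish an analogue of additivity in Osserman's framework, and you explicitly flag as ``delicate'' exactly the two steps (how the non-separating nodes are encoded in multivanishing data, and whether any additivity persists) that constitute the proof. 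The paper has to do real work here: it writes down the cover explicitly (two copies of $[X,p]$ attached to a cycle of two rational curves), chooses a smoothing family in $\rr_g$, passes to a crepant resolution, and uses the norm condition in the form $\mathrm{Nm}_{\pi'}(\mathcal{L})\cong\omega_{\mathcal{C}'/\Delta}(\sum D_i)$ together with a parity count of degrees on the inserted rational chains to prove that the limit line bundle is trivial on those chains; only then does \cite[Theorem 5.9]{Ossermandim} identify the relevant data as multivanishing sequences with respect to $0\leq y_i+z_i\leq\cdots\leq g\cdot(y_i+z_i)$, after which one must still enumerate the two possible concentrated multidegrees and run a separate Brill--Noether count for each. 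None of this is sketched, let alone carried out, in your proposal.

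Moreover, the contradiction you anticipate is not the one that actually materializes, and it is doubtful it could be. There is no additivity statement for these pseudo-compact-type covers in which ``the ramified rational part contributes at worst $-r$''; the paper's contradiction is instead an inequality on multivanishing orders: genericity of $[X_i,p_i]$ and strong Brill--Noether generality of the pointed rational components bound the sums $\sum_i c_i^j$ from above, while Osserman's compatibility condition bounds $\sum_i(c_i^1+c_i^2)$ from below, yielding $(r+1)(2g-4)-2\geq (r+1)(2g-4)$. Relatedly, your opening reduction --- showing that \emph{no} member of $\chi_g(A_1)$ admits \emph{any} limit $g^r_{2g-2}$ --- is stronger than what the paper proves at the degenerate points: there the paper only excludes \emph{Prym} limit linear series, i.e.\ limits constrained by the norm condition through the smoothing family, and nothing in your proposal rules out that these non-compact-type covers lie in $\mm^r_{2g-1,2g-2}$. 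So the norm condition is not a convenient ``simplification'' of an argument available without it; it is the hypothesis that makes the degenerate case provable at all, and supplying that argument is precisely what is missing.
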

\begin{proof}
		We assume a curve of compact type in $\chi_g(A_1)$ admits such a limit $g^r_{2g-2}$. Using parts III and VI of Lemma \ref{bnLemma}, together with the additivity of Brill-Noether numbers, we get the contradiction 
	\[ \rho(2g-1,r,2g-2) = -r-2 \geq -r \]
	
The only curve in $A_1$ not associated to a double cover of compact type is the one in $A_1\cap \Delta_0^{\mathrm{ram}}$. Let $[X_1, p_1]$ and $[X_2, p_2]$ two copies of the generic curve $[X,p]$ used in the test curve and let $[R_1, x_1, y_1,z_1]$ and $[R_2, x_2,y_2,z_2]$ two copies of the unique element of $\mathcal{M}_{0,3}$. Then the associated double cover for the curve in $A_1\cap \Delta_0^{\mathrm{ram}}$ is obtained from the curves defined above by glueing together $y_1$ with $y_2$, $z_1$ with $z_2$ and $p_i$ with $x_i$ for $i=1,2$. We denote this curve by $\widetilde{C}$ and the target of the double cover by $C$. The dual graph $\Gamma(\widetilde{C})$ of this curve is 
\begin{figure}[H] \centering
\begin{tikzpicture}[auto, node distance=3cm, every loop/.style={},
thick,main node/.style={circle,draw,font=\sffamily\Large\bfseries}]
	
	\node[main node] (1) {$g-1$};
	\node[main node] (2) [right of=1] {$0$};
	\node[main node] (3) [right of=2] {$0$};
	\node[main node] (4) [right of=3] {$g-1$};
	
	\path[every node/.style={font=\sffamily\small}]
	(1) edge node [] {} (2)
	(2) edge[bend right] node [left] {} (3)
	(3) edge node [right] {} (4)
	(2) edge[bend left] node [right] {} (3);
	\node[above right = 0.5mm of {(4.2,0.5)}] {$y$};
	\node[above right = 0.5mm of {(4.2,-1)}] {$z$};
\end{tikzpicture} \caption{The dual graph of $\widetilde{C}$, decorated with genera of the components}
\end{figure}
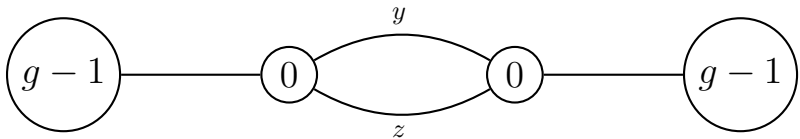 

As remarked in \cite[Theorem 3.3]{Ossermandim}, all components of the curve $\widetilde{C}$ are strongly Brill-Noether general. Next we want to understand how does a linear series $g^r_{2g-2}$ satisfying the norm condition specialize to $\widetilde{C}$. To understand the possible limit linear series above this curve, we look at \cite[Section 3]{Ossermancompacttype}. We assume there exists a Prym limit linear series on $\widetilde{C}$ and we study what multivanishing conditions such a limit linear series satisfies.


We consider a smoothing family of $\pi\colon \widetilde{C} \rightarrow C$ in $\rr_g$
	\[
\begin{tikzcd}
	\widetilde{\mathcal{C}}  \arrow{r}{\pi} \arrow[swap]{dr}{} & \mathcal{C} \arrow{d}{} \\
	&  \Delta
\end{tikzcd}
\]
where $\Delta$ is the unit disk. Let $\Delta^*$ be the disk without the origin and assume that $\widetilde{\mathcal{C}}^*\coloneqq \widetilde{\mathcal{C}} \times_{\Delta}\Delta^*$ admits a line bundle $\mathcal{L}^*$ such that 
\[ \mathrm{Nm}_{\pi}\mathcal{L}^* \cong \omega_{\mathcal{C}^*/\Delta^*} \]

Let $\mathcal{C}'\rightarrow \mathcal{C}$ be the crepant resolution that smooths the singularity at the non-separating node of the central fibre $C_0 = C$. We consider $\widetilde{\mathcal{C}}'\coloneqq \widetilde{\mathcal{C}}\times_{\mathcal{C}} \mathcal{C}'$ and observe that the pullback of $\mathcal{L}^*$ to this space can be extended over the central fibre. Let $\mathcal{L}$ be a line bundle on $\widetilde{\mathcal{C}}'$ so obtained. Because $\pi' \colon \widetilde{\mathcal{C}}' \rightarrow \mathcal{C}'$ is an \'etale double cover, it follows from \cite[6.5.2]{EGA2} that the norm is well-defined and $\mathrm{Nm}_{\pi'}(\mathcal{L})$ is a line bundle on $\mathcal{C}'$ that extends $\omega_{\mathcal{C}'/\Delta^*}$. Hence we have 
\[ \mathrm{Nm}_{\pi'}(\mathcal{L}) \cong \omega_{\mathcal{C}'/\Delta}(\sum D_i)\]
where $D_i$ are irreducible components of the central fibre. 

We look at the chains of rational curves added when smoothing the nodes $y$ and $z$ of the central fibre $\widetilde{C}$. Up to tensoring with irreducible components of the central fibre, we may assume that $\mathcal{L}$: 
\begin{itemize}
	\item restricts to the trivial line bundle on all but at most one rational component in the two chains and 
	\item if it restricts non-trivially to a rational component, then it has degree $1$ on it. 
\end{itemize}

We know from \cite[Proposition 6.5.8]{EGA2} that the norm map is well-behaved with respect to restricting to the central fibre. When looking at the degrees of $\omega_{\mathcal{C}'/\Delta}(\sum D_i)$ on the rational components in the chain, they add up to an even number. From here, it follows that $\mathcal{L}$ must be trivial on the chains added at the nodes $y$ and $z$. 

By construction, the chains have the same number of irreducible components. Hence, the multivanishing orders of the Prym limit linear series (on $R_1$ and $R_2$) are with respect to the following two sequences of divisors:
\[ 0 \leq y_1+ z_1 \leq \cdots \leq g\cdot (y_1+ z_1) \]
and
\[ 0 \leq y_2+ z_2 \leq \cdots \leq g\cdot (y_2+ z_2). \]
 This is a consequence of \cite[Theorem 5.9]{Ossermandim}. 

We have two possibilities for the concentrated multidegrees. It is enough to describe the possible multidegrees concentrated at $X_1$. The ones for the other components are obtained from those by twisting. 

The possible multidegrees concentrated at $X_1$ are: 
\begin{figure}[H] \centering
	\begin{tikzpicture}[auto, node distance=3cm, every loop/.style={},
		thick,main node/.style={circle,draw,font=\sffamily\Large\bfseries}]
		
		\node[main node] (1) {$2g-3$};
		\node[main node] (2) [right of=1] {$0$};
		\node[main node] (3) [right of=2] {$1$};
		\node[main node] (4) [right of=3] {$0$};
		
		\path[every node/.style={font=\sffamily\small}]
		(1) edge node [] {} (2)
		(2) edge[bend right] node [left] {} (3)
		(3) edge node [right] {} (4)
		(2) edge[bend left] node [right] {} (3);
	\end{tikzpicture} \caption{Concentrated multidegree: first possibility}
\end{figure}
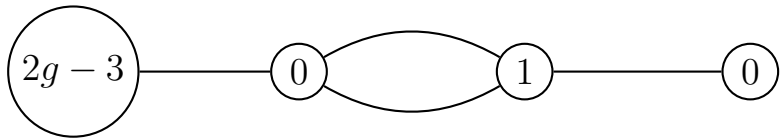 
and 
\begin{figure}[H] \centering
	\begin{tikzpicture}[auto, node distance=3cm, every loop/.style={},
		thick,main node/.style={circle,draw,font=\sffamily\Large\bfseries}]
		
		\node[main node] (1) {$2g-2$};
		\node[main node] (2) [right of=1] {$0$};
		\node[main node] (3) [right of=2] {$0$};
		\node[main node] (4) [right of=3] {$0$};
		
		\path[every node/.style={font=\sffamily\small}]
		(1) edge node [] {} (2)
		(2) edge[bend right] node [left] {} (3)
		(3) edge node [right] {} (4)
		(2) edge[bend left] node [right] {} (3);
	\end{tikzpicture} \caption{Concentrated multidegree: second possibility}
\end{figure}
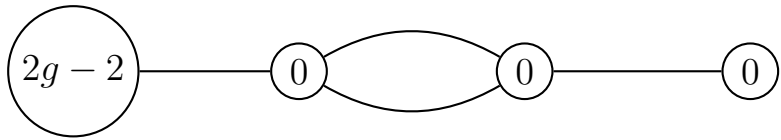 

Assume we are in the first case. Then, on the generic curve $[X_1, p_1]$ we have a $g^r_{2g-3}$ with ramification orders at $p_1$ denoted $0\leq a_0^1 < a_1^1 <\cdots < a_r^1 \leq 2g-3$. 

The genericity of $[X_1,p_1]$ implies 
\[ \rho(g-1,r,2g-3) - \sum_{i=0}^r a_i^1 +\frac{r(r+1)}{2} \geq 0 \]
That is: 
\[ g-1 + (r+1)(g-2-r) + \frac{r(r+1)}{2} \geq \sum_{i=0}^r a^1_i\]
Let $b_0^1,\ldots, b_r^1$ be the ramification orders of the limit linear series at $x_1 \in R_1$. We know 
\[ \sum_{i=0}^r(b_{r-i}^1 + a_i^1) \geq (r+1)(2g-3) \]
From this and the previous inequality we get: 
\[ \sum_{i=0}^rb_i^1 \geq (r+1)(g+r-1) - \frac{r(r+1)}{2} + 1-g \]

If we denote $b_0^2,\ldots, b_r^2$ to be the ramification orders of the limit linear series at $x_2 \in R_2$, we obtain analogously: 
\[ \sum_{i=0}^rb_i^2 \geq (r+1)(g+r-1) - \frac{r(r+1)}{2} + 1-g \] 

We denote by $c^1_0, c^1_1,\ldots, c^1_r$ the multivanishing orders associated to $R_1$ for the sequence of divisors
\[ 0 \leq y_1+ z_1 \leq \cdots \leq g\cdot (y_1+ z_1) \]
We consider $c^2_0,\ldots, c_r^2$ similarly for the rational component $R_2$. 

Because $[R_1, x_1, y_1, z_1]$ is strongly Brill-Noether general, see \cite[Definition 3.2 and Theorem 3.3]{Ossermandim}, it follows that: 
\[ \rho(0,r,2g-3) - \sum_{i=0}^rb_i^1 - \sum_{i=0}^r c_i^1 + r(r+1) \geq 0 \]
Hence 
\[ (r+1)(2g-3) - \sum_{i=0}^rb_i^1 \geq \sum_{i=0}^r c_i^1 \]
Similarly
\[ (r+1)(2g-3) - \sum_{i=0}^rb_2^1 \geq \sum_{i=0}^r c_2^1 \]
Adding the two formulas and using the compatibility condition, see \cite[Definition 2.16]{Ossermandim} we get the contradiction 
\[ (r+1)(2g-4)-2 \geq \sum_{i=0}^{r}(c^1_i+c^2_i) \geq (r+1)(2g-4) \]

The second possibility for the multidegrees is treated analogously. In conclusion, this curve does not admit a Prym limit $g^r_{2g-2}$.
\end{proof}
\section{Pullbacks of the Prym-Brill-Noether divisor} \label{pullbacks}

Another standard way of obtaining relations between the coefficients of a divisor is to understand its pullbacks through different maps. We will separate this section into two, depending on whether the norm condition is necessary in understanding the pullback, or the Brill-Noether number suffices.  
\subsection{Pullbacks and Brill-Noether theory}
Let $\mm_{0,g/\mathrm{S}_{g-1}}$ be the moduli space parametrizing stable $g$-pointed genus 0 curves $[R, p_1+\cdots+p_{g-1}, p_g]$ where the markings $p_1,\ldots, p_{g-1}$ are unordered. On this moduli space, we have the boundary divisors $\epsilon_2,\ldots, \epsilon_{g-2}$, where a generic element of $\epsilon_i$ has two irreducible components and the point $p_g$ is on a component with exactly $i-1$ other markings. Moreover, we consider an elliptic pointed Prym curve $[E,x,\eta_E] \in \cC^1\cR_1$ and take the map: 
\[ i \colon \mzero \rightarrow \rr_g   \]
glueing a copy of $[E,x,\OO_E]$ to each of the points $p_1,\ldots, p_{g-1}$ and a copy of $[E,x,\eta_E]$ to $p_g$. First, we describe the pullback of this map at the level of divisors. 
\begin{prop} \label{Picardmap} Let $ i \colon \mzero \rightarrow \rr_g $ be the map above. Then we have:
	\begin{itemize}
		\item $ i^*\lambda = i^*\delta_0'=i^*\delta_0'' =i^*\delta^{\mathrm{ram}}_0 = 0$
		\item $i^*\delta_{i:g-i} = 0 \ \textrm{for every} \ 1\leq i \leq \floor{\frac{g}{2}}$
		\item  $i^*\delta_i = \epsilon_i \ \textrm{for} \ 2\leq i \leq g-2$
		\item $i^*\delta_{g-1} = -\sum^{g-2}_{i=2}\frac{(i-1)(g-i)}{g-2}\epsilon_i \ \textrm{and}$
		\item $i^*\delta_1 = - \sum_{i=2}^{g-2}\frac{(g-i-1)(g-i)}{(g-2)(g-1)}\epsilon_i$
	\end{itemize}
\end{prop}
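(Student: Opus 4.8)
The plan is to read off the five vanishing statements from the shape of a generic curve in the image of $i$, to treat $\delta_i$ ($2\le i\le g-2$) by a transverse smoothing argument, and to reserve the genuine work for $\delta_1$ and $\delta_{g-1}$, where the image of $i$ lies inside the target divisor and the pullback must be extracted from a normal bundle computation.

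First I would record the shape of a point in the image: gluing $g$ elliptic tails to the markings of a rational spine produces a curve of compact type of arithmetic genus $g$ whose dual graph is a tree, and whose $2$-torsion bundle restricts to $\eta_E\neq 0$ only on the tail glued at $p_g$ and to $\OO$ on the spine and on the remaining $g-1$ tails. Three of the five statements follow at once. Since the elliptic curve $E$ (with its $\eta_E$) is fixed, the Hodge bundle over the family is the constant bundle $\bigoplus_{k}H^0(E,\omega_E)$ coming from the $g$ tails, so $i^*\lambda = 0$. Since every member is of compact type, none acquires a non-separating node, whence $i^*\delta_0'=i^*\delta_0''=i^*\delta_0^{\mathrm{ram}}=0$. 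Finally, any separating node splits such a curve into two connected subcurves exactly one of which contains the $p_g$-tail, so $\eta$ is nontrivial on precisely one side; the image therefore never meets $\Delta_{i:g-i}$ and $i^*\delta_{i:g-i}=0$.

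For $\delta_i$ with $2\le i\le g-2$ I would identify the locus mapping into $\Delta_i$. A generic point of $\epsilon_i$ breaks the spine into a component carrying $p_g$ and $i-1$ of the symmetric markings and a complementary component carrying the remaining $g-i$ markings; after gluing the tails these become subcurves of genus $i$ and $g-i$ meeting at a separating node, with $\eta$ nontrivial on the genus-$i$ side (which contains the $p_g$-tail). Thus the image of $\epsilon_i$ lies in $\Delta_i$, and since smoothing the spine node is transverse to $\Delta_i$ with multiplicity one, $i^*\delta_i=\epsilon_i$. The substantive case is $\delta_1$ and $\delta_{g-1}$. A genus-$1$ subcurve carrying nontrivial $\eta$ can only be the $p_g$-tail, while a genus-$1$ subcurve carrying trivial $\eta$ must be one of the tails at $p_1,\dots,p_{g-1}$; applying the standard elliptic-tail self-intersection formula branch by branch (the normal bundle of a boundary divisor at a node is the tensor product of the two branch tangent lines, and the tangent line along a fixed tail is trivial) gives
\[ i^*\delta_1 = -\psi_{p_g}, \qquad i^*\delta_{g-1} = -\sum_{j=1}^{g-1}\psi_{p_j}, \]
where $\psi_{p_k}$ is the cotangent class of the spine at $p_k$. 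To finish I would rewrite these in the $\epsilon_i$ basis using the genus-$0$ identity $\psi_a=\sum_{a\in S,\,b,c\notin S}\delta_{0,S}$ on $\mm_{0,g}$, averaging it over the admissible reference pairs to produce an $S_{g-1}$-invariant expression and then descending to $\mzero$. Counting, for a partition inducing $\epsilon_i$, the reference pairs that separate the relevant marking yields the coefficients $\tfrac{(g-i)(g-i-1)}{(g-1)(g-2)}$ for $\psi_{p_g}$ and $\tfrac{(i-1)(g-i)}{g-2}$ for $\sum_{j\le g-1}\psi_{p_j}$, which reproduce the stated formulas after the overall sign.

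I expect the main obstacle to be the rigorous justification of the two $\psi$-class identities together with the absence of spurious multiplicities: one must verify that near the separating boundary the cover $\rr_g\to\mm_g$ introduces no extra factor (in contrast with the behaviour along $\Delta_0$), and that the self-intersection formula applies verbatim with each fixed tail contributing a trivial branch, so that the coefficient of each $\epsilon_i$ in $i^*\delta_i$ is exactly $1$. Once these points are secured, the symmetrization producing the $\epsilon_i$-coefficients is routine combinatorial bookkeeping.
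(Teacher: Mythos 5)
Your proof is correct, and it arrives at the same two key intermediate identities as the paper, namely $i^*\delta_1 = -\psi_{p_g}$ and $i^*\delta_{g-1} = -\sum_{j=1}^{g-1}\psi_{p_j}$, but by genuinely different means. The paper handles the vanishing statements and $i^*\delta_i = \epsilon_i$ by composing with $\rr_g\rightarrow\mm_g$ and citing the Eisenbud--Harris flag-curve computation, obtains the two $\psi$-identities by factoring $i$ through $\mm_{g-1,1}$ and invoking the pullback formulas $\pi^*\delta_1=-\psi$, $\pi^*\delta_{g-1}=\delta_{g-2}$ from Proposition 6.1 of the author's earlier paper on $\cR_{g,2}$, and then converts to the $\epsilon_i$-basis by pulling back along the forgetful map $p\colon\mzero\rightarrow\overline{\mathcal{M}}_{0,g-1/S_{g-1}}$ using $p^*\epsilon_i=\epsilon_{i+1}+\epsilon_{g-i}$. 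You instead argue everything directly: the branch-by-branch self-intersection formula for a fixed elliptic tail attached at a moving point (with the trivial tangent factor on the fixed tail) yields the two $\psi$-identities, and the basis conversion comes from symmetrizing the comb identity $\psi_a=\sum_{a\in S,\,b,c\notin S}\delta_{0,S}$ on $\mm_{0,g}$ over reference pairs and descending along the $S_{g-1}$-quotient. Your counting is right: it gives $\psi_{p_g}=\sum_{i=2}^{g-2}\tfrac{(g-i)(g-i-1)}{(g-1)(g-2)}\epsilon_i$ and $\sum_{j=1}^{g-1}\psi_{p_j}=\sum_{i=2}^{g-2}\tfrac{(i-1)(g-i)}{g-2}\epsilon_i$, which are exactly the stated classes after the sign. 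In fact your route sidesteps a misprint in the paper's own argument: the identity stated there as $\sum_{i=1}^{g}\psi_i=\sum_{i=2}^{g-2}\epsilon_i$ should read $\sum_{i=1}^{g}\psi_i=\sum_{i=2}^{g-2}\tfrac{i(g-i)}{g-1}\epsilon_i$, and your coefficients are the ones consistent with the proposition as stated. What the paper's approach buys is brevity and reuse of the pullback computation along $\pi\colon\mm_{g-1,1}\rightarrow\rr_g$, which is needed elsewhere in the paper anyway; what yours buys is self-containedness and an explicit identification of the two points that genuinely require checking (triviality of the branch tangent line on the fixed tails, and the absence of ramification of $\rr_g\rightarrow\mm_g$ along the separating boundary, in contrast to $\Delta_0^{\mathrm{ram}}$), which the paper subsumes under citations and "simple geometric observations."
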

\begin{proof}
	All but the last two formulas follow by simple geometric observations and by looking at the composition map 
	\[ \mzero \xrightarrow{i} \rr_g \rightarrow \mm_g  \]
	whose pullback at the level of Picard groups was computed in \cite{EisenbudHarrisg>23}. 
	
	For computing $i^*\delta_1$ and $ i^*\delta_{g-1}$ we look at the diagram: 
	\[
\begin{tikzcd}
	\mzero \arrow{r}{j}  \arrow[swap]{d}{p} & \mm_{g-1,1} \arrow{d}{} \arrow{r}{\pi} & \rr_{g} \\
	\overline{\mathcal{M}}_{0,g-1/S_{g-1}}\arrow{r}{}& \mm_{g-1}
\end{tikzcd}
\] 
	
	We know from \cite[Proposition 6.1]{BudKodPrym} that $\pi^*\delta_1 = -\psi$ and $\pi^*\delta_{g-1} = \delta_{g-2}$. Using this and \cite[Section 3]{EisenbudHarrisg>23} we get $i^*\delta_1 = -\psi_g$ and $i^*\delta_{g-1} = -\sum_{i=1}^{g-1}\psi_i$. Furthermore
	\[ -\sum_{i=1}^{g-1}\psi_i = p^*(-\sum_{i=1}^{g-1}\psi_i) -\epsilon_2 = p^*(-\sum_{i=2}^{\floor{\frac{g-1}{2}}}\frac{i(g-1-i)}{g-2} \epsilon_i ) -\epsilon_2 \]
But $p^*\epsilon_i = \epsilon_{i+1} + \epsilon_{g-i}$ with the exception $i=\frac{g}{2}$ when $g$ is even, in which case $p^*\epsilon_{\frac{g}{2}} = \epsilon_{\frac{g}{2}+1}$. Consequently we have 
\[ i^*\delta_{g-1} = -\sum_{i=2}^{g-2}\frac{(i-1)(g-i)}{g-2}\epsilon_i \] 
Because $\sum_{i=1}^g \psi_i = \sum_{i=2}^{g-2}\epsilon_i$ we get 
\[ i^*\delta_1 = -\psi_g =  - \sum_{i=2}^{g-2} \frac{(g-i-1)(g-i)}{(g-2)(g-1)}\epsilon_i \]
\end{proof}

We have the following: 
\begin{prop} \label{zeropullback}
	Let $i \colon \mzero \rightarrow \rr_g$ be as above. Then we have $i^*[\rr^r_g] = 0$.
\end{prop}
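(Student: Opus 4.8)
The plan is to show that no curve in the image $i(\mzero)$ carries a Prym-Brill-Noether $g^r_{2g-2}$, so that the pullback $i^*[\rr^r_g]$ is supported on the empty set and hence vanishes. Recall that the image consists of curves $[C,\eta]$ where $C$ is a chain/tree built from a rational backbone $R$ with $g-1$ elliptic tails glued at $p_1,\dots,p_{g-1}$ and one elliptic tail at $p_g$, and the $2$-torsion line bundle $\eta$ is trivial on all tails except the one at $p_g$. The associated double cover $\widetilde{C}$ is of compact type (since $R$ is rational and all the gluings are at the elliptic tails via a base point), so the theory of Eisenbud--Harris limit linear series applies directly, and the norm condition forces $\widetilde C$ to carry a limit $g^r_{2g-2}$ whose existence I will rule out by a Brill-Noether number count.

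The key computation is additivity of the Brill-Noether number. First I would note that $g = \frac{r(r+1)}{2}$, so that on the smooth curve the global Prym-Brill-Noether number is exactly $\rho(g,r,2g-2) = g - 1 - \frac{r(r+1)}{2} = -1$ in the Prym sense, which is why $\cR^r_g$ is a divisor; but after degenerating to $i(\mzero)$, the genus is distributed among $g$ elliptic components and one rational backbone. For each elliptic tail $E_j$ with attaching point $x_j$, part II of Lemma \ref{bnLemma} gives $\rho(1,r,2g-2,\textbf{a}^j) \geq 0$, where $\textbf{a}^j$ is the vanishing profile of the aspect at $x_j$. For the rational backbone $R$, carrying the $g$ marked points with their respective vanishing profiles, part I of Lemma \ref{bnLemma} gives that the Brill-Noether number of the $R$-aspect is also nonnegative. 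Then, by the additivity formula \cite[Proposition 4.6]{limitlinearbasic}, summing the local contributions yields
\[ -1 = \rho(2g-1,r,2g-2) \geq \sum_{j=1}^{g}\rho(1,r,2g-2,\textbf{a}^j) + \rho_R \geq 0, \]
a contradiction.

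The main subtlety I expect is handling the norm condition correctly at the component carrying $\eta$: the tail attached at $p_g$ carries the nontrivial $2$-torsion, so in $\widetilde C$ its preimage is a connected double cover (an elliptic or genus-one-type piece whose aspect must be counted compatibly), whereas the other tails split into two copies. I would argue, as in the proof of Proposition \ref{pointedSchwarz}, that the norm condition $\mathrm{Nm}(L) \cong \omega_C$ reduces the count to one aspect per component of $\widetilde C$ and contributes the global Brill-Noether number $\rho(2g-1,r,2g-2) = -1$ of the genus $2g-1$ cover, rather than a larger deficiency. Once the bookkeeping of which tails split and which stays connected is pinned down, the inequality above closes and forces emptiness. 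Since a divisor class pulls back to the class of the (scheme-theoretic) preimage, and that preimage is empty, we conclude $i^*[\rr^r_g] = 0$.
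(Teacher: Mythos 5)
Your high-level strategy --- rule out any limit $g^r_{2g-2}$ on the double covers lying over $i(\mzero)$ via additivity of Brill--Noether numbers and Lemma \ref{bnLemma} --- is indeed the paper's strategy, but your execution has genuine errors that prevent the proof from closing. First, your displayed inequality decomposes the wrong curve: the sum $\sum_{j=1}^{g}\rho(1,r,2g-2,\textbf{a}^j)+\rho_R$ reflects the structure of the base curve $C$ (one rational backbone, $g$ elliptic tails), whereas the limit linear series lives on the double cover $\widetilde{C}$, which has \emph{two} rational components $R_1,R_2$ (the backbone splits, since $\eta$ restricts trivially to $R$), two copies of each of the $g-1$ tails carrying $\OO_E$, and one \emph{connected} elliptic bridge $\widetilde{E}$ over the tail carrying $\eta_E$, glued to $R_1$ at one point and to $R_2$ at another --- in total $2g-1$ genus-one components and two rational ones. (Your last paragraph says the tails split into two copies, but your count of $g$ elliptic pieces contradicts this.) Second, the number on your left-hand side is wrong: $\rho(2g-1,r,2g-2)=(2g-1)-(r+1)^2=-r-2$, using $2g=r(r+1)$; you have conflated the Prym--Brill--Noether deficiency $g-1-\tfrac{r(r+1)}{2}=-1$ with the ordinary Brill--Noether number of the genus $2g-1$ cover. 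Third, and most importantly, the claim that every genus-one aspect contributes $\geq 0$ via part II of Lemma \ref{bnLemma} fails precisely for the bridge $\widetilde{E}$: it meets the rest of $\widetilde{C}$ in \emph{two} points, so only part III applies, giving a contribution bounded below by $-r$. These errors interact fatally: if you fix only the bridge contribution but keep $-1$ on the left, the inequality becomes $-1\geq -r$, which is true for $r\geq 1$ and yields no contradiction at all. The argument only closes with both corrections in place: $-r-2 = \rho(2g-1,r,2g-2) \geq 0+\cdots+0+(-r) = -r$, which is absurd.

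Your final paragraph half-recognizes the bridge issue but tries to resolve it with the norm condition, which is both unnecessary and, as stated, incorrect (it again rests on the false identity $\rho(2g-1,r,2g-2)=-1$). The paper never invokes the norm condition in this proposition: it uses only the containment $\rr^r_g\subseteq\chi_g^{-1}\text{\large(}\mm^r_{2g-1,2g-2}\text{\large)}$, i.e.\ it forgets the Prym structure entirely and shows that the underlying genus $2g-1$ covers admit no limit $g^r_{2g-2}$ whatsoever. Once you (i) describe $\widetilde{C}$ correctly, (ii) use $\rho(2g-1,r,2g-2)=-r-2$, and (iii) apply part III to the bridge and parts I and II to all remaining components, additivity gives the contradiction above, the preimage $i^{-1}(\rr^r_g)$ is empty, and $i^*[\rr^r_g]=0$ as you intended.
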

\begin{proof}
	We consider the map $\chi_g\colon\rr_g \rightarrow \mm_{2g-1}$ sending $[C,\eta]$ to the associated double cover $\widetilde{C}$ of $C$. Then we have 
	\[ \rr^r_g \subseteq \chi_g^{-1}\text{\large (}\mm^r_{2g-1,2g-2}\text{\large )}\]
	where $\cM^r_{2g-1,2g-2}$ is the locus of curves in $\cM_{2g-1}$ possessing a $g^r_{2g-2}$.  Consequently it is enough to show that $\mathrm{Im}(\chi_g\circ i)$ does not intersect $\mm^r_{2g-1,2g-2}$. But the image consists of curves as in the following figure: 
	\begin{figure}[H] \centering 
		\begin{tikzpicture}
			\draw [ xshift=4cm] plot [smooth, tension=1] coordinates { (0,0) (0.1,1) (0.2,2) (0.3,3)(0.4,4)(0.5,5)(0.6,6)(0.7,7)}
			node[left]{$R_1$};
			\draw [ xshift=4cm] plot [smooth, tension=1] coordinates { (4.5,0) (4.6,1) (4.7,2) (4.8,3)(4.9,4)(5,5)(5.1,6)(5.2,7)}
			node[left]{$R_2$};
			
			\draw[fill=black](4.05,0.5)circle(2pt);
			\draw[fill=black](4.25,2.5)circle(2pt);
			\draw[fill=black](4.55,5.5)circle(2pt);
			\node[above right = 0.5mm of {(4.05,0.5)}] {$x_1$};
			\node[above right = 0.5mm of {(4.25,2.5)}] {$x_2$};
			\node[above right = 0.5mm of {(4.55,5.5)}] {$x_{g-1}$};
			
			\draw[fill=black](4.64,6.45)circle(2pt);
		    \draw[fill=black](9.17,6.5)circle(2pt);
		    \node[above right = 0.5mm of {(4.64,6.45)}] {$y_1$};
		    \node[above right = 0.5mm of {(9.17,6.5)}] {$y_2$};
		    
		    \draw[fill=black](8.55,0.5)circle(2pt);
		    \draw[fill=black](8.75,2.5)circle(2pt);
		    \draw[fill=black](9.05,5.5)circle(2pt);
		    \node[above right = 0.5mm of {(8.55,0.5)}] {$x_g$};
		    \node[above right = 0.5mm of {(8.75,2.5)}] {$x_{g+1}$};
		    \node[above right = 0.5mm of {(9.05,5.5)}] {$x_{2g-2}$};

			\draw[fill=black](6.5,4.5)circle(1pt);
			\draw[fill=black](6.5,3.5)circle(1pt);
			\draw[fill=black](6.5,4)circle(1pt);
			
			\draw [ xshift=4cm] plot [smooth, tension=1] coordinates {(-3,0.3)(-2,0.5)(-1,0.3)(0,0.5)(1,0.4) }
			node[right]{$E_1$};
			
			\draw [ xshift=4cm] plot [smooth, tension=1] coordinates {(-2.8,2.3)(-1.8,2.5)(-0.8,2.3)(0.2,2.5)(1.2,2.4) }
			node[right]{$E_2$};
			
			\draw [ xshift=4cm] plot [smooth, tension=1] coordinates {(-2.5,5.3)(-1.5,5.5)(-0.5,5.3)(0.5,5.5)(1.5,5.4) }
			node[right]{$E_{g-1}$};
			
			\draw [ xshift=4cm] plot [smooth, tension=1] coordinates {(7.5,0.3)(6.5,0.5)(5.5,0.3)(4.5,0.5)(3.5,0.4) }
			node[left]{$E_{g}$};
			
			\draw [ xshift=4cm] plot [smooth, tension=1] coordinates {(7.7,2.3)(6.7,2.5)(5.7,2.3)(4.7,2.5)(3.7,2.4) }
			node[left]{$E_{g+1}$};
			
			\draw [ xshift=4cm] plot [smooth, tension=1] coordinates {(8,5.3)(7,5.5)(6,5.3)(5,5.5)(4,5.4) }
			node[left]{$E_{2g-2}$};

			\draw [ xshift=4cm] plot [smooth, tension=1] coordinates {(-1,6.4)(0,6.3)(1,6.5)(2,6.3)(3,6.4)(4,6.4)(5,6.5)(6,6.4)(7,6.5)}
			node[right]{$E_{2g-1}$};
			
		\end{tikzpicture}\caption{Element in the image of $\chi_g\circ i$}
		\end{figure}
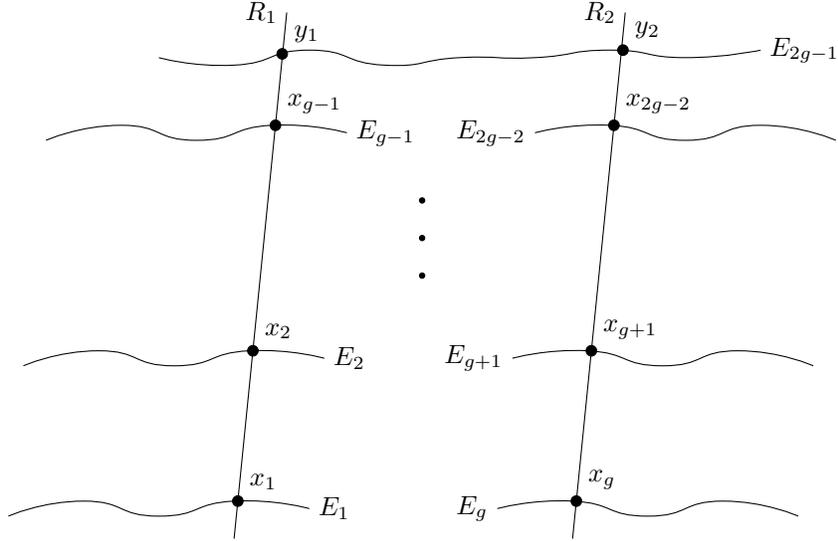
Here $R_1$ and $R_2$ are curves of genus 0, $[E_i,x_i]$ are copies of $[E,x]$ for $1\leq i \leq 2g-2$ and $[E_{2g-1}, y_1, y_2]$ is the double cover associated to $[E,x,\eta_E]$. 

We assume there is a curve in $\mathrm{Im}(\chi_g\circ i)$ admitting a limit $g^r_{2g-2}$. Because of I and II of the Lemma \ref{bnLemma}, the associated Brill-Noether number of all components, except the bridging elliptic curve, is greater or equal to 0. Because of part III in Lemma \ref{bnLemma}, the associated Brill-Noether number of the bridging elliptic curve is greater or equal to $-r$. 

Additivity of the Brill-Noether numbers imply 
\[ -r-2 = \rho(2g-1, r, 2g-2) \geq -r \]
As no curve in the image admits a limit $g^r_{2g-2}$ we deduce our conclusion. 
\end{proof}

Next, we consider $[X,x,\eta_X] \in \mathcal{C}^1\cR_{g-2}$ a generic Prym pointed curve and take the map 
\[ j\colon \mm_{2,1}\rightarrow \rr_g \]
sending a pointed curve $[C,y]$ to $ [X\cup_{x\sim y}C, \eta_X, \OO_C]$.
\begin{prop} \label{weierstrasspull} Let $\overline{\mathcal{W}} \subseteq \mm_{2,1}$ be the Weierstrass divisor. Then we have
	\[ j^*[\rr^r_g]  = c\cdot[\overline{\mathcal{W}}] \]
for some constant $c$. 
\end{prop}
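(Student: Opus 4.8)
The plan is to pull $[\rr^r_g]$ back along $\chi_g\circ j$ and reduce the statement to a limit linear series computation on the associated double cover. Writing $Y = X\cup_{x\sim y}C$, its connected double cover is $\widetilde Y = \widetilde X \cup_{\tilde x_1} C_1 \cup_{\tilde x_2} C_2$, where $\widetilde X\to X$ is the connected \'etale double cover attached to $\eta_X$ (of genus $2g-5$, with the two points $\tilde x_1,\tilde x_2$ over $x$), and $C_1,C_2$ are the two copies of $C$ over the trivial part $\OO_C$, each glued to $\widetilde X$ at a point over $x$. This is a curve of compact type in $\mm_{2g-1}$, and since $\rr^r_g \subseteq \chi_g^{-1}(\mm^r_{2g-1,2g-2})$, the support of $j^*[\rr^r_g]$ is contained in the locus of $[C,y]$ for which $\widetilde Y$ carries a Prym limit $g^r_{2g-2}$. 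I would first record that $\overline{\mathcal{W}}$ is irreducible, so that it suffices to prove two things: that no boundary divisor of $\mm_{2,1}$ lies in this locus, and that in the interior the locus is set-theoretically contained in $\overline{\mathcal{W}}$. Effectivity of the pullback then forces $j^*[\rr^r_g] = c\cdot[\overline{\mathcal{W}}]$ for some constant $c\geq 0$.

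For the boundary of $\mm_{2,1}$ I would argue as in Propositions \ref{testA_g-1} and \ref{testA_1}: a generic point of $\delta_0$ or $\delta_1$ produces, after passing to the double cover, a curve all of whose aspects have nonnegative Brill--Noether number (via Lemma \ref{bnLemma}), so that the additivity of Brill--Noether numbers contradicts $\rho(2g-1,r,2g-2) = -r-2$. Hence these boundary divisors contribute nothing to $j^*[\rr^r_g]$.

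The heart of the argument is the interior statement, where I would combine the genericity of $[X,x,\eta_X]$ with the norm condition. By Proposition \ref{pointedSchwarz}, the aspect of the limit series on $\widetilde X$, carrying ramification $\textbf a^1,\textbf a^2$ at $\tilde x_1,\tilde x_2$, exists only if $\rho(g-2,r,2g-2,\textbf a^1,\textbf a^2)\geq -r$. The decisive extra input is the norm condition: on the trivial part it forces $L|_{C_2}\cong \omega_C(y)\otimes(L|_{C_1})^{-1}$, so the two genus-two aspects are Serre-dual and in particular their degrees sum to $\deg\omega_C(y)=3$. This rigidifies the admissible vanishing data at $y$, so that it is governed by a single genus-two linear series; matching this against the Schwarz constraint through the additivity of Brill--Noether numbers forces the genus-two aspect to have strictly negative adjusted Brill--Noether number. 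By parts IV and V of Lemma \ref{bnLemma} this can only happen when $y$ is a Weierstrass point of $C$, that is $[C,y]\in\overline{\mathcal{W}}$.

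I expect the main obstacle to be exactly this last step. A purely numerical Brill--Noether count is insufficient: without the norm condition the Schwarz bound on $\widetilde X$ and the genus-two bound of Lemma \ref{bnLemma} IV can be satisfied simultaneously by non-Weierstrass data (for instance by concentrating degree on a single genus-two aspect, which produces valid achievable ramification), so the proportionality to $\overline{\mathcal{W}}$ genuinely relies on the degree constraint $\deg(L|_{C_1})+\deg(L|_{C_2})=3$ coming from the norm map together with the Serre-duality linkage of the two aspects. Carrying this out rigorously requires Osserman's theory of limit linear series for the total space, which is not of compact type, and a careful bookkeeping of the twists at the node, in the spirit of the computation in the proof of Proposition \ref{testA_1}.
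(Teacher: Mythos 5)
Your setup and reduction coincide with the paper's: pull back along $\chi_g\circ j$, use irreducibility of $\overline{\mathcal{W}}$, and treat the boundary of $\mm_{2,1}$ and the interior separately. But the step you call the heart of the argument --- and which you leave as a sketch --- rests on a false premise. You claim that a purely numerical Brill--Noether count cannot exclude non-Weierstrass points, and that the norm condition is the decisive input. In fact the purely numerical count is the entire proof, and it is the paper's proof. The double cover $\widetilde{Y} = C_1\cup_{y_1\sim \tilde x_1}\widetilde{X}\cup_{\tilde x_2\sim y_2}C_2$ is of compact type (as you correctly note at the start, before contradicting yourself in the last paragraph), so Eisenbud--Harris limit linear series and additivity of adjusted Brill--Noether numbers apply directly: if $\widetilde{Y}$ carries a limit $g^r_{2g-2}$, then
\[
-r-2 \;=\; \rho(2g-1,r,2g-2) \;\geq\; \rho(l_{\widetilde{X}},\tilde x_1,\tilde x_2) + \rho(l_{C_1},y_1)+\rho(l_{C_2},y_2).
\]
Proposition \ref{pointedSchwarz}, applied to the generic $[X,x,\eta_X]\in\cC^1\cR_{g-2}$, gives $\rho(l_{\widetilde{X}},\tilde x_1,\tilde x_2)\geq -r$, hence $\rho(l_{C_1},y_1)+\rho(l_{C_2},y_2)\leq -2$. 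But part IV of Lemma \ref{bnLemma} holds for \emph{every} pointed genus-two curve outside $\mathcal{W}$, not just generic ones, so if $y$ were not a Weierstrass point both summands would be nonnegative --- a contradiction. Part V rules out the boundary divisors by the same count (note that in your boundary paragraph Lemma \ref{bnLemma} cannot be invoked for the $\widetilde{X}$-aspect, since $\widetilde{X}$ with the two points over $x$ is not a generic pointed curve; one needs Proposition \ref{pointedSchwarz}, which gives only $\geq -r$, but $-r>-r-2$ still yields the contradiction). The norm condition enters only through the containment $\rr^r_g\subseteq\chi_g^{-1}(\mm^r_{2g-1,2g-2})$, i.e.\ by being forgotten.

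The mechanism behind your objection --- ``concentrating degree on a single genus-two aspect'' --- does not exist in the Eisenbud--Harris framework you are otherwise using: every aspect of a limit $g^r_{2g-2}$ has full degree $2g-2$, and Lemma \ref{bnLemma} IV bounds the ramification of any such aspect at a non-Weierstrass point, so there is no freedom of degree distribution to exploit. The picture in which the restrictions to $C_1$ and $C_2$ have degrees summing to $\deg\omega_C(y)=3$ is the multidegree picture for an honest line bundle on the nodal curve; that is the language of Proposition \ref{testA_1} (where the cover genuinely fails to be of compact type), and mixing it with aspect-ramification data is what derails your sketch and leads you to invoke Osserman's theory, which is unnecessary here. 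As written, your interior step is therefore both unproven and aimed at the wrong target; the correct argument is the short count above.
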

\begin{proof} Because the Weierstrass divisor $\overline{\mathcal{W}}$ is irreducible, it is enough to show $j^{-1}(\rr^r_g) \subseteq    \overline{\mathcal{W}}$. Because $\rr^r_g \subseteq \chi_g^{-1}(\mm^r_{2g-1,2g-2})$, it is enough to show 
	\[ (\chi_g\circ j)^{-1}(\mm^r_{2g-1,2g-2}) \subseteq \overline{\mathcal{W}} \] 
But this follows as before using Proposition \ref{pointedSchwarz}, parts IV and V of Lemma \ref*{bnLemma} and additivity of the Brill-Noether numbers. 
\end{proof}

The map $j^*\colon \mathrm{Pic}(\rr_g) \rightarrow \mathrm{Pic}(\mm_{2,1})$ is described in \cite[Proposition 6.1]{BudKodPrym}.
\subsection{Pullbacks and Prym linear series}
Let $[C,\eta] \in \rr_g$ such that $C$ is of compact type and admits a unique irreducible component $X$ satisfying $\eta_X \ncong \OO_X$. For this component $X$, we denote by $p^X_1,\ldots, p^X_{s_X}$ its nodes and by $g^X_1,\ldots, g^X_{s_X}$ the genera of the connected components of $C\setminus X$ glued to $X$ at these points.  For an irreducible component $Y$ of $C$, different from $X$, we denote by $q^Y$ the node glueing $Y$ to the connected component of $C\setminus Y$ containing $X$. Let $p^Y_1,\ldots, p_{s_Y}^Y$ be the other nodes of $Y$. We denote by $g^Y_0, g^Y_1,\ldots, g^Y_{s_Y}$ the genera of the connected components of $C\setminus Y$ glued to $Y$ at these points. Furthermore, we denote by $\pi\colon\widetilde{C}\rightarrow C$ the double cover associated to $[C,\eta]$. With these notations set-up, we define the concept of a Prym limit $g^r_{2g-2}$: 
\begin{defi}
	In the notations above, a Prym limit $g^r_{2g-2}$ on $\pi\colon \widetilde{C}\rightarrow C$ is a crude limit $g^r_{2g-2}$ on $\widetilde{C}$ satisfying the following two conditions: 
	\begin{enumerate}
		\item For the unique component $\widetilde{X}$ of $\widetilde{C}$ above $X$, the $\widetilde{X}$-aspect $L_{\widetilde{X}}$ of the $g^r_{2g-2}$ satisfies
		\[Nm_{\pi_{|\widetilde{X}}} L_{\widetilde{X}} \cong \omega_X(\sum_{i=1}^s2g^X_ip_i)\]
		\item For a component $Y$ of $C$ different from $X$, we denote by $Y_1$ and $Y_2$ the two irreducible components of $\widetilde{C}$ above it. We identify these two components with $Y$ via the map $\pi$. With this identification, the $Y_1$ and $Y_2$-aspects of the $g^r_{2g-2}$ satisfy 
		\[ L_{Y_1}\otimes L_{Y_2} \cong \omega_Y((2g-2+2g^Y_0)q^Y + \sum_{i=1}^sg_i^Yp_i^Y) \]
	\end{enumerate} 
	
\end{defi}

It is immediate that we have the following lemma: 
\begin{lm}
	Let $[C,\eta]\in \rr^r_g$ with $C$ having a unique irreducible component $X$ for which $\eta_X \ncong \OO_X$. Let $\pi\colon\widetilde{C}\rightarrow C$ the double cover associated to $[C,\eta]$. Then $[\pi\colon \widetilde{C}\rightarrow C]$ admits a Prym limit $g^r_{2g-2}$. 
\end{lm}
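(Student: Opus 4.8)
The plan is to realise $[C,\eta]$ as the central fibre of a one-parameter smoothing whose generic fibre lies in the open locus $\mathcal{R}^r_g$, and then to specialise the linear series coming from $V^r$ together with its norm condition, in the same spirit as the proof of Proposition \ref{testA_1}. Since $\rr^r_g$ is by definition the closure of $\mathcal{R}^r_g$, after passing to a disk $\Delta$ (and a finite base change if necessary) there is a smoothing family $\pi\colon \widetilde{\mathcal{C}} \to \mathcal{C} \to \Delta$ with central fibre $\pi\colon \widetilde{C} \to C$ such that, over the punctured disk $\Delta^*$, the family $\widetilde{\mathcal{C}}^* \to \Delta^*$ carries a line bundle $\mathcal{L}^*$ of relative degree $2g-2$ with $h^0 \geq r+1$ on every fibre and $\mathrm{Nm}_\pi(\mathcal{L}^*) \cong \omega_{\mathcal{C}^*/\Delta^*}$. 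The parity condition in the definition of $V^r$ plays no role here, since the notion of Prym limit $g^r_{2g-2}$ only records the line bundle and the norm.

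Next I would check that $\widetilde{C}$ is again of compact type, so that the Eisenbud--Harris theory applies and the phrase ``crude limit $g^r_{2g-2}$'' in the definition is meaningful. Writing $n$ for the number of components of $C$, the double cover has a single component $\widetilde{X}$ over $X$ and two components over every other component, hence $2n-1$ components in total; each of the $n-1$ separating nodes of $C$ has exactly two preimages, giving $2n-2$ nodes. As $\widetilde{C}$ is connected because $\eta \ncong \OO_C$, its dual graph is a connected graph with $2n-1$ vertices and $2n-2$ edges, hence a tree.

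Then, by the completeness of the Eisenbud--Harris variety of limit linear series over the compact-type locus \cite{limitlinearbasic}, after possibly a further base change the line bundle $\mathcal{L}^*$ together with a family of $(r+1)$-dimensional subspaces of its sections extends to a crude limit $g^r_{2g-2}$ on $\widetilde{C}$: one extends $\mathcal{L}^*$ to a line bundle $\mathcal{L}$ on $\widetilde{\mathcal{C}}$ and reads off the aspects $L_Z = (\mathcal{L} \otimes \OO(\sum_v a^Z_v \widetilde{C}_v))|_Z$ obtained by concentrating the total degree $2g-2$ on each component $Z$.

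Finally --- and this is the only point requiring care --- I would verify that this limit linear series satisfies the two norm conditions of the definition. Extending the norm over the family exactly as in Proposition \ref{testA_1}, using \cite[6.5.2, Proposition 6.5.8]{EGA2}, gives $\mathrm{Nm}_\pi(\mathcal{L}) \cong \omega_{\mathcal{C}/\Delta}(\sum_v a_v C_v)$ for suitable integers $a_v$; restricting to the central fibre and applying $\mathrm{Nm}_\pi$ to the twist concentrating degree on $\widetilde{X}$ (respectively, multiplying the twists concentrating degree on $Y_1$ and on $Y_2$) converts the relation $\mathrm{Nm}_\pi(\mathcal{L}^*) \cong \omega$ into the two displayed identities. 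The main obstacle is precisely this bookkeeping on the dual tree: one must track how twisting by components of the central fibre interacts with $\mathrm{Nm}_\pi$ and with the genus decompositions $g = g_X + \sum_i g_i^X$ and $g = g_Y + g_0^Y + \sum_i g_i^Y$. A useful consistency check is the degree count on the étale cover $\widetilde{X}\to X$, where $\mathrm{Nm}_\pi$ preserves degrees: $\deg \omega_X(\sum_i 2g_i^X p_i) = 2g_X - 2 + 2(g-g_X) = 2g-2 = \deg L_{\widetilde X}$, matching condition (1).
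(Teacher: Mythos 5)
Your overall strategy---realize $[C,\eta]$ as the central fibre of a smoothing whose generic fibre lies in $\cR^r_g$, check that $\widetilde{C}$ is of compact type so that Eisenbud--Harris theory produces a crude limit $g^r_{2g-2}$, then push the norm relation to the central fibre---is exactly the argument the paper has in mind; in fact the paper offers no proof at all (it declares the lemma ``immediate''), precisely because the definition of a Prym limit $g^r_{2g-2}$ was engineered to be the specialization of the norm condition. Your first three steps are correct, including the dual-graph count ($2n-1$ vertices, $2n-2$ edges, connected since $\eta\ncong\OO_C$). The problem is that the step you defer as ``bookkeeping'' is the entire content of the lemma, and the mechanism you sketch for condition (2)---``multiplying the twists concentrating degree on $Y_1$ and on $Y_2$''---does not parse: $\mathrm{Nm}_\pi$ is applied to a single line bundle on $\widetilde{\mathcal{C}}$ at a time, so it never sees a product of the two different twists $\mathcal{L}(T_1)$ and $\mathcal{L}(T_2)$. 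Without resolving this, the two displayed identities remain unproved.

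Here is what actually closes the gap. Since $\pi$ is \'etale, the norm preserves degrees componentwise: $\deg\bigl(\mathrm{Nm}_\pi(\mathcal{M})|_Z\bigr)=\deg\bigl(\mathcal{M}|_{\pi^{-1}(Z)}\bigr)$ for every component $Z$ of $C$. Hence if $T_{\widetilde{X}}$ concentrates the degree of $\mathcal{L}$ on $\widetilde{X}$, then $\mathrm{Nm}_\pi\bigl(\mathcal{L}(T_{\widetilde{X}})\bigr)$ is an extension of $\omega_{\mathcal{C}^*/\Delta^*}$ with all degree concentrated on $X$; such an extension is unique up to whole fibres, and its restriction to $X$ is $\omega_X(\sum_i 2g^X_i p_i)$, because the branch of genus $g^X_i$ at $p_i$ feeds degree $2g^X_i-1$ into $X$ on top of the node contribution. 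Together with the compatibility of the norm with restriction to the central fibre, this is condition (1) --- note this is an isomorphism of line bundles, strictly stronger than your degree count. For condition (2), the same argument applied to the twist $T_1$ concentrating on $Y_1$ yields
\[ L_{Y_1}\otimes\bigl(\mathcal{L}(T_1)|_{Y_2}\bigr)\;\cong\;\omega_Y\bigl(2g^Y_0q^Y+\textstyle\sum_i 2g^Y_ip^Y_i\bigr), \]
and the second factor is \emph{not} $L_{Y_2}$. You must compare the twists: $T_2-T_1$ is the divisor supported on the central fibre whose associated flow on the dual tree moves degree $2g-2$ from $Y_1$ to $Y_2$ along the path through $\widetilde{X}$, so its restriction to $Y_2$ is $\OO_{Y_2}\bigl(-(2g-2)q^Y\bigr)$, i.e.\ $\mathcal{L}(T_1)|_{Y_2}\cong L_{Y_2}\bigl((2g-2)q^Y\bigr)$. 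Substituting gives $L_{Y_1}\otimes L_{Y_2}\cong\omega_Y\bigl((2g-2+2g^Y_0)q^Y+\sum_i 2g^Y_ip^Y_i\bigr)$; this discrepancy computation is the one non-formal idea in the proof, it is exactly where the coefficient $2g-2+2g^Y_0$ in the definition comes from, and it is invisible to degree checks. (Incidentally, running your own consistency check on condition (2) would show the coefficients of $p^Y_i$ must be $2g^Y_i$ rather than the $g^Y_i$ printed in the paper's definition---further evidence that this bookkeeping cannot be waved away.)
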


Let $[E,p,\eta_E] \in \mathcal{C}^1\mathcal{R}_1$ generic and consider the map $\pi\colon \mm_{g-1,1} \rightarrow \rr_g$ sending a pointed curve $[Y,q]$ to the Prym curve $[Y\cup_{q\sim p}E,\OO_Y, \eta_E] \in \rr_g$. The pullback of this map at the level of Picard groups was computed in \cite[Proposition 6.1]{BudKodPrym}. We ask what divisors appear in the pullback $\pi^*(\rr^r_g)$. 

Let $g = \frac{r(r+1)}{2}$ and consider the sequence of vanishing orders $\textbf{a} = (0,2,\ldots, 2r)$. Let $\cM^r_{g-1,g+r-1}(\textbf{a})$ be the divisor in $\cM_{g-1,1}$ parametrizing pointed curves $[C,p]$ admitting a $g^r_{g+r-1}$ with vanishing orders greater or equal than $\textbf{a}$ at the point $p$. We have:
\begin{prop} \label{secondrel}Let $g = \frac{r(r+1)}{2}$ and $\textbf{a} = (0,2,\ldots,2r)$. Then there exists a constant $c$ such that at the level of divisorial classes we have: 
	\[ \pi^*(\rr^r_g) = c\cdot[\mm^r_{g-1,g+r-1}(\textbf{a})] + \Delta \] 
	where $\Delta$ is a boundary divisor that does not contain $\Delta_0$ in its support.
\end{prop}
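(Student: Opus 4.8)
The plan is to identify $\pi^{-1}(\rr^r_g)$ with the locus of pointed curves $[Y,q]$ for which the double cover $\widetilde{C}$ of $C = Y\cup_{q\sim p}E$ carries a Prym limit $g^r_{2g-2}$, and then to read off the resulting condition on $[Y,q]$ by a Brill--Noether count on the components of $\widetilde{C}$. Since $\eta$ is trivial on $Y$ and nontrivial on $E$, the cover is disconnected over $Y$ and connected over $E$, so it is the chain $\widetilde{C} = Y_1\cup_{q_1\sim p_1}\widetilde{E}\cup_{p_2\sim q_2}Y_2$, where $Y_1\cong Y_2\cong Y$, the pointed curve $[\widetilde{E},p_1,p_2]$ is the fixed, generic genus-one \'etale double cover of $[E,p,\eta_E]$, and $p_1,p_2$ are the two points over $p$. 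By the lemma preceding the statement, membership of $[C,\eta]$ in $\rr^r_g$ forces such a Prym limit series to exist, and conversely a standard smoothing/regeneration argument in the spirit of \cite{Ossermandim,Ossermancompacttype} shows that the existence of a Prym limit $g^r_{2g-2}$ characterises $\pi^{-1}(\rr^r_g)$ away from a locus of higher codimension. The two norm conditions defining the Prym limit series read $\mathrm{Nm}_{\pi|_{\widetilde{E}}}L_{\widetilde{E}}\cong \omega_E(2(g-1)p)$ on the bridge and $L_{Y_1}\otimes L_{Y_2}\cong \omega_Y(2gq)$ on the two outer components.

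For the non-boundary part I would work with a smooth $[Y,q]$ and use additivity of Brill--Noether numbers \cite[Proposition 4.6]{limitlinearbasic}. Writing $\rho_{Y_1},\rho_{Y_2},\rho_{\widetilde{E}}$ for the contributions of the three aspects and using $\rho(2g-1,r,2g-2) = -r-2$, part III of Lemma \ref{bnLemma} gives $\rho_{\widetilde{E}}\geq -r$, whence $\rho_{Y_1}+\rho_{Y_2}\leq -2$; conversely a divisorial component cannot force $\rho_{Y_i}\leq -2$ on a single outer curve, since such a locus has codimension $\geq 2$ in $\cM_{g-1,1}$. Hence along the generic point of each divisorial component one has $\rho_{\widetilde{E}}=-r$ and $\rho_{Y_1}=\rho_{Y_2}=-1$. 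The norm condition $\mathrm{Nm}(L)\cong\omega_C$ is equivalent to the self-duality $\iota^*L\cong \omega_{\widetilde{C}}\otimes L^{-1}$, and since $\iota$ exchanges $p_1$ and $p_2$, this symmetry together with $\rho_{\widetilde{E}}=-r$ and the genericity of $[E,p,\eta_E]$ pins the vanishing sequences of $L_{\widetilde{E}}$ at $p_1,p_2$ to the balanced gap-two sequence $(g-r-1,g-r+1,\dots,g+r-1)$. By the compatibility of the chain, the $Y$-aspects then have vanishing $\geq (g-r-1,g-r+1,\dots,g+r-1)$ at $q$; twisting down by $(g-r-1)q$ turns $L_{Y_i}$ into a $g^r_{g+r-1}$ with vanishing $\geq \textbf{a} = (0,2,\dots,2r)$ at $q$, i.e. exactly the condition defining $\mm^r_{g-1,g+r-1}(\textbf{a})$.

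It then remains to see that the two outer aspects impose one and the same condition, so that the locus is a divisor rather than a codimension-two cycle. The key point is that the norm relation $L_{Y_1}\otimes L_{Y_2}\cong \omega_Y(2gq)$ translates, after twisting, into the involution $M\mapsto \omega_Y\otimes M^{-1}((2r+2)q)$ on $\mathrm{Pic}^{g+r-1}(Y)$; this map is its own inverse and, by Serre duality, carries a $g^r_{g+r-1}$ with vanishing $\geq\textbf{a}$ at $q$ to another such series. Thus $L_{Y_1}$ and $L_{Y_2}$ are interchanged by this self-duality and determine one another, so membership of $[Y,q]$ in $\pi^{-1}(\rr^r_g)$ is the single divisorial condition $[Y,q]\in\mm^r_{g-1,g+r-1}(\textbf{a})$. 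This gives the set-theoretic equality on the interior; the constant $c$ records the length of the scheme structure together with the finite number of admissible $\widetilde{E}$-aspects, and $\Delta$ collects whatever boundary divisors of $\cM_{g-1,1}$ occur in $\pi^{-1}(\rr^r_g)$.

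Finally I would check that $\Delta_0$ is not among them. For a generic $[Y,q]\in\Delta_0$ the curve $Y$ is irreducible with one non-separating node and normalisation a generic curve of genus $g-2$; glueing to $E$ produces a $\widetilde{C}$ that is no longer of compact type, but whose components are strongly Brill--Noether general in the sense of \cite[Theorem 3.3]{Ossermandim}. Running the additivity estimate for limit linear series on curves not of compact type exactly as in the proof of Proposition \ref{testA_1} yields $\rho(2g-1,r,2g-2)\geq -r$, contradicting $\rho=-r-2$, so no Prym limit $g^r_{2g-2}$ exists over the generic point of $\Delta_0$ and $\Delta_0\not\subseteq\mathrm{supp}(\Delta)$. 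The main obstacle is the middle step: extracting the exact gap-two vanishing sequence on the elliptic bridge from the norm condition, and verifying that the induced self-duality collapses the two $Y$-aspects into a single divisorial condition; the remaining arguments are Brill--Noether bookkeeping parallel to Propositions \ref{testA_1} and \ref{zeropullback}.
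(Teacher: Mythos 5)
Your overall skeleton matches the paper's: the same degeneration $Y_1\cup_{q_1\sim p_1}\widetilde{E}\cup_{p_2\sim q_2}Y_2$, the same additivity bookkeeping forcing $\rho(L_{Y_1},q_1)=\rho(L_{Y_2},q_2)=-1$ and $\rho(L_{\widetilde{E}},p_1,p_2)=-r$ at a generic point of a divisorial component, and the same final identification with $\mm^r_{g-1,g+r-1}(\textbf{a})$ after removing the base locus. However, there is a genuine gap at exactly the step you flag as ``the main obstacle'': pinning the vanishing sequence to $(g-r-1,g-r+1,\ldots,g+r-1)$. Your proposed mechanism --- the $\iota$-symmetry plus $\rho_{\widetilde{E}}=-r$ plus genericity of $[E,p,\eta_E]$ --- is not sufficient. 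What the bridge actually gives is this: $\rho_{\widetilde{E}}=-r$ forces $L_{\widetilde{E}}\cong\OO(\alpha_ip_1+\beta_{r-i}p_2)$ for all $i$, hence $(\alpha_i-\alpha_{i-1})(p_1-p_2)\equiv 0$; since $p_1-p_2$ is $2$-torsion on $\widetilde{E}$, the gaps $\alpha_i-\alpha_{i-1}$ are even, and the symmetry (granting it) gives the centering $\alpha_i+\alpha_{r-i}=2g-2$. But this does not exclude symmetric sequences with some gaps larger than $2$. Concretely, for $r=3$, $g=6$, $d=2g-2=10$, the sequence $(0,4,6,10)$ is symmetric with even gaps, is compatible with the bridge conditions, and satisfies $\rho(L_{Y_i},q_i)=13-14=-1$; so it cuts out an a priori nonempty divisor in $\cM_{5,1}$ that your argument cannot rule out. (A secondary issue: the $\iota$-symmetry of the \emph{vanishing sequences} is itself not automatic, since the definition of a Prym limit series constrains only the aspect line bundles, not the chosen spaces of sections; making this precise requires an argument about limits of residual series.)

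The paper closes this gap from the $Y$-side rather than the bridge: using exactness of the $h^0$'s (which follows from $\rho(L_{Y_i},q_i)=-1$) together with the norm relation $L_{Y_1}\otimes L_{Y_2}\cong\omega_Y(2gq)$ and Riemann--Roch, one gets $h^0\bigl(Y_1,L_{Y_1}(-(2+a_r)q)\bigr)=g+r-1-a_r\geq 0$, hence $a_r\leq g+r-1$, and symmetrically $a_0\geq g-r-1$; only then do the even/$\geq 2$ gaps from the bridge force $a_i=g-r-1+2i$. In the example above this is exactly what kills $(0,4,6,10)$, since $a_r=10>g+r-1=8$. So the norm condition on $Y$ must be used to bound the extremal vanishing orders, not merely (as in your step on the involution $M\mapsto\omega_Y\otimes M^{-1}((2r+2)q)$) to show the two outer aspects determine one another. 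For what it is worth, your final step excluding $\Delta_0$ via a non-compact-type argument parallel to Proposition \ref{testA_1} is sound and is actually spelled out more explicitly than in the paper's own proof, which leaves that point implicit.
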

\begin{proof}
	Let $[Y,q]$ be a generic element of a divisor in the pullback. Then the unramified double cover 
	\[ [Y_1\cup_{q_1\sim p_1}\widetilde{E}\cup_{p_2\sim q_2}Y_2\rightarrow Y\cup_{q\sim p}E]\]
	associated to $[Y\cup_{q\sim p}E, \OO_Y, \eta_E]$ admits a Prym limit $g^r_{2g-2}$. In particular $[Y_1\cup_{q_1\sim p_1}\widetilde{E}\cup_{p_2\sim q_2}Y_2]$ admits a limit $g^r_{2g-2}$ (with aspects denoted $L_{Y_1}, L_{Y_2}$ and $L_{\widetilde{E}}$) and we have the inequality 
	\[ \rho(2g-1,r,2g-2) = -r-2 \geq \rho(L_{Y_1}, q_1) + \rho(L_{Y_2}, q_2) + \rho(L_{\widetilde{E}}, p_1, p_2) \]
	Because $[Y,q]$ was chosen generic in a divisorial component we have $ \rho(L_{Y_1}, q_1) \geq -1$ and $ \rho(L_{Y_2}, q_2) \geq -1$. We also have from \cite[Proposition 1.4.1]{FarkasThesis} that $\rho(L_{\widetilde{E}}, p_1, p_2) \geq -r$ and hence all inequalities are actually equalities. 
	
	We denote by $0\leq a_0 <\cdots < a_r \leq d$ and $0\leq b_0 \coloneqq 2g-2-a_r <\cdots < b_r \coloneqq 2g-2-a_0 \leq d$ the ramification orders at $q_1$ and $q_2$ respectively. 
	
	Because $\rho(L_{Y_1}, q_1) = \rho(L_{Y_2}, q_2) = -1$ we have 
	\[ h^0\text{\large(}Y_1, L_{Y_1}(-a_iq_1)\text{\large)} = r+1-i \ \textrm{for every } \ 0\leq i \leq r \] 
	and 
	\[ h^0\text{\large(}Y_2, L_{Y_2}(-b_iq_2)\text{\large)} = r+1-i \ \textrm{for every } \ 0\leq i \leq r \]
	Using that $L_{Y_1}\otimes L_{Y_2} \cong \omega_Y(2gq)$ and $b_i = 2g-2-a_{r-i}$ we obtain  
	\[ h^0\text{\Large(}Y, \omega_Y\otimes L_{Y_1}^{-1}\text{\large(}(2+a_{r-i})q\text{\large)}\text{\Large)} = r+1-i \]
	This implies by Riemann-Roch: 
	\[ h^0\text{\Large(}Y_1, L_{Y_1}\text{\large(}-(2+a_{r-i})q\text{\large)}\text{\Large)} = g+r-1-a_{r-i}-i\]
	Using that $h^0\text{\Large(}Y_1, L_{Y_1}\text{\large(}-(2+a_r)q\text{\large)}\text{\Large)}\geq 0$ we get 
	\[ a_r \leq g+r-1\]
	Inverting the roles of the $a_i$'s and $b_i$'s we obtain 
	\[ a_0\geq g-r-1 \] 
	Because we have the divisorial equivalences on $\widetilde{E}$: 
	\[ a_ip_1 +b_{r-i}p_2 \equiv  a_jp_1 +b_{r-j}p_2 \]
	for every $0\leq i, j \leq r$, we get $a_i-a_{i-1} \geq 2$ for every $1\leq i \leq r$. This forces the unique possibility $a_i = g-r+2i-1$. Taking out the base locus $a_0q$ we get the conclusion. 
\end{proof}

Because of \cite{FarkasCastel}, computing the slope $\frac{a}{b_0'}$ in Theorem \ref{maintrm} becomes a purely combinatorial problem. Let 
\[ [\mathcal{W}_{g-1}] = \frac{g(g-1)}{2}\psi - \lambda - \sum_{i=1}^{g-2} \binom{g-i}{2} \delta_i  \]
and 
\[ [\mathcal{BN}_{g-1}] = (g+2)\lambda - \frac{g}{6}\delta_0 - \sum_{i=1}^{g-2} i(g-i-1)\delta_i\]
be the Weierstrass and Brill-Noether divisors in genus $g-1$. We know that $[\mm^r_{g-1,g+r-1}(\textbf{a})]$ is a linear combination of $[\mathcal{W}_{g-1}]$ and $[\mathcal{BN}_{g-1}]$. Let $\mu$ and $\nu$ the constants such that 
\[ [\mm^r_{g-1,g+r-1}(\textbf{a})] = \mu\cdot [\mathcal{BN}_{g-1}] + \nu \cdot[\mathcal{W}_{g-1}]\]
We know from \cite[Corollary 1 and Formula (6)]{FarkasCastel} that 
\[ \mu = -\frac{n}{2g(g-2)} + \frac{\Sigma}{2(g-2)(g-3)} \ \mathrm{and} \ \nu = \frac{n}{(g-2)(g-1)g}\]  
where 
\[ n = (g-1)!\cdot 2^{\frac{r(r+1)}{2}}\cdot \frac{(r-1)r^2(r+1)^2(r+2)}{16}\cdot \prod_{i=1}^r\frac{i!}{(2i)!} \]
and 
\[ \Sigma = (g-2)!\cdot 2^{\frac{r(r+1)}{2}}\cdot \prod_{j=1}^{r}\frac{j!}{(2j)!}\cdot \frac{1}{2^{2r-1}}\sum_{i=1}^r\frac{(2i)!(2r-2i+1)!}{(r-i)!\cdot (r-i)!\cdot i!\cdot (i-1)!}\cdot P(r,i) \]
where 
\[ P(r,i) = \frac{1}{16}(r^6+3r^5-21r^4-71r^3-100r^2-68r) + i(6r^3+12r^2+10r+4) -i^2(6r^2+6r+4) \]

Our next goal is to show that $ \Sigma = \frac{(g+1)(g-3)}{g(g-1)}\cdot n$. After simplifying, our goal is to show the equality 
\[ 2^{2r-6}\cdot \text{\large [}r(r+1)+2\text{\large]}(r-2)(r-1)r(r+1)(r+2)(r+3) = 2\sum_{i=1}^ri\cdot\binom{2i-1}{i}\cdot(r-i+1)\cdot\binom{2r-2i+1}{r-i}\cdot P(r,i)\]
This is an immediate consequence of the following three identities: 
\[ \sum_{i=1}^{r}i\cdot \binom{2i-1}{i}\cdot(r-i+1)\cdot \binom{2r-2i+1}{r-i} = \binom{r+1}{2}\cdot 2^{2r-2} \] 
\[ \sum_{i=1}^{r}i^2\cdot \binom{2i-1}{i}\cdot(r-i+1)\cdot \binom{2r-2i+1}{r-i} = 2^{2r-2}\cdot\binom{r+2}{3} + 2^{2r-3}\cdot \binom{r+1}{3}\]
and 
\[\sum_{i=1}^{r}i^3\cdot \binom{2i-1}{i}\cdot(r-i+1)\cdot \binom{2r-2i+1}{r-i} = 2^{2r-2}\cdot \binom{r+3}{4} + 5\cdot2^{2r-3}\cdot\binom{r+2}{4} + 2^{2r-4}\cdot\binom{r+1}{4}\]
These formulas can be obtained by looking at generating functions, by repeated derivations and multiplications, starting with the identity 
\[ \frac{1-\sqrt{1-4x}}{2x} = \sum_{i=0}^\infty \frac{(2i)!}{i!\cdot(i+1)!}x^i\]

As a consequence of this, we conclude that $\mu = \nu$.

Next, we consider the map $\pi_2\colon \mm_{g-1,2} \rightarrow \Delta_0''\subseteq \rr_g$ sending $[C,x,y]$ to $[C_{/x\sim y}, \eta]$ where $\eta$ satisfies $\nu^*\eta \cong \mathcal{O}_C$ for the normalization $\nu\colon C\rightarrow C_{/x\sim y}$. We ask what is the pullback of the divisor $\rr_g^r$ through this map. To answer this question we first define a divisor on $\cM_{g-1,2}$ and then show it is the pullback of $\cR_g^r$. 

Let $[C,x,y] \in \cM_{g-1,2}$ and consider the sequence $D_{\bullet}(x,y)$ of divisors
\[ 0 \leq x+y\leq \cdots \leq n(x+y) \leq \cdots   \]
together with the multivanishing sequence $\textbf{a} = (0,2,\ldots, 2r)$. We consider the locus
\begin{multline*} \cM^r_{g-1, g+r-1}\text{\large (}D_{\bullet}, \textbf{a}\text{\large )} \coloneqq \left\{ \right. [C,x,y] \in \cM_{g-1,2} \ | \ \exists \ (V,L)\in G^r_{g+r-1}(C) \ \textrm{satisfying} \ \\  h^0\text{\Large (}C,V(-i(x+y))\text{\Large )} \geq r+1-i \ \forall \ 0\leq i \leq r \left.\right\} 
\end{multline*}
parametrizing pointed curves having a $g^r_{g+r-1}$ with multivanishing orders with respect to $D_{\bullet}$ greater or equal to $\textbf{a}$.

Theorem \ref{stronglyBNdiv} implies that this locus is an irreducible divisor (except eventually for some higher codimension components).

\begin{prop} \label{sBNpull} \normalfont In the notations above, we have 
	\[\pi_2^*\rr^r_g =  c\cdot[\mm^r_{g-1, g+r-1}\text{\large (}D_{\bullet}, \textbf{a}\text{\large )}] + \Delta \] 
	for some constant $c$ and some boundary divisor $\Delta$ not containing $\Delta_0$ in its support. 
\end{prop}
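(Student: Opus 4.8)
The plan is to follow the strategy of Propositions \ref{testA_1} and \ref{secondrel}, translating the existence of a Prym limit $g^r_{2g-2}$ on the double cover into a multivanishing condition on $C$. First I would describe the cover. For a generic $[C,x,y]\in\mm_{g-1,2}$, its image $\pi_2([C,x,y]) = [C_0,\eta]$ with $C_0 = C/(x\sim y)$ and $\nu^*\eta\cong\OO_C$ lies in $\Delta_0''$, and the associated \'etale double cover $\widetilde{C}_0$ consists of two copies $C^{(1)}, C^{(2)}$ of $C$ glued crosswise at the nodes $x_1\sim y_2$ and $y_1\sim x_2$; this is a connected curve of arithmetic genus $2g-1$ whose dual graph carries a single loop. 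If $[C,x,y]$ is a generic point of a divisorial component of $\pi_2^*\rr^r_g$ not contained in the boundary, then $\widetilde{C}_0$ admits a Prym limit $g^r_{2g-2}$, and the goal is to show that this occurs exactly when $C$ carries a $g^r_{g+r-1}$ with multivanishing orders $\geq\textbf{a} = (0,2,\ldots,2r)$ along $D_\bullet(x,y)$.

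Second, I would analyze the norm condition. Since $\widetilde{C}_0$ is not of compact type, I use Osserman's theory (\cite{Ossermandim}, \cite{Ossermancompacttype}) exactly as in Proposition \ref{testA_1}: a smoothing of the node of $C_0$ smooths both nodes of $\widetilde{C}_0$ simultaneously, and after a crepant resolution the norm-compatibility of \cite[Proposition 6.5.8]{EGA2} forces the extended line bundle to restrict trivially to the chains of rational curves inserted at the two nodes. Consequently the multivanishing of the two aspects $L_1, L_2$ living on $C^{(1)}\cong C^{(2)}\cong C$ is taken with respect to the single chain $D_\bullet(x,y)$, and the norm condition, restricted to the components, yields the relation $L_1\otimes L_2\cong\omega_C(x+y)$ with $\deg L_1 = g+r-1$ and $\deg L_2 = g-r-1$ for the distinguished concentrated multidegree. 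As in Proposition \ref{secondrel}, Serre duality converts this into a complementarity between the multivanishing sequences of $L_1$ and $L_2$.

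Third, I would pin down the sequence. As in Proposition \ref{secondrel}, the genericity of $[C,x,y]$ in a divisorial component makes the relevant Brill--Noether numbers minimal; the complementarity coming from $L_1\otimes L_2\cong\omega_C(x+y)$, together with the divisorial equivalences forced by the two nodes of the loop, makes consecutive multivanishing orders differ by at least $2$. Since $g=\frac{r(r+1)}{2}$, the computation $\rho(g-1,r,g+r-1,\textbf{a}) = -1$ leaves $\textbf{a} = (0,2,\ldots,2r)$ as the only possibility, with the distinguished aspect a $g^r_{g+r-1}$ on $C$; hence $[C,x,y]\in\mm^r_{g-1,g+r-1}(D_\bullet,\textbf{a})$, and conversely any such point lies in the pullback. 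By Theorem \ref{stronglyBNdiv} this locus is irreducible, so it appears in $\pi_2^*\rr^r_g$ with a single multiplicity $c$, all remaining components being boundary divisors.

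Finally, I would verify that $\Delta_0$ is not among them. A generic point of $\Delta_0\subseteq\mm_{g-1,2}$ is an irreducible one-nodal curve of geometric genus $g-2$, which is Brill--Noether general; a dimension count via Lemma \ref{bnLemma} and additivity of Brill--Noether numbers along the relevant normalization shows that such a curve cannot carry a $g^r$ with the required multivanishing, so the multiplicity of $\Delta_0$ in $\pi_2^*\rr^r_g$ vanishes and $\Delta_0\not\subseteq\mathrm{supp}(\Delta)$. The main obstacle is the second step: correctly running the Osserman limit-linear-series analysis on the two-nodal, non-compact-type curve $\widetilde{C}_0$ and checking that the norm condition, rather than imposing two independent vanishing conditions at $x$ and $y$, collapses to the joint multivanishing along $D_\bullet(x,y)$ with the precise sequence $\textbf{a}$.
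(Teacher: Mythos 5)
Your overall strategy coincides with the paper's: describe the crosswise-glued double cover, run the smoothing/norm argument of Proposition \ref{testA_1} to force multivanishing along the single chain $D_\bullet(x,y)$, then use genericity in a divisorial component plus the norm condition to pin down $\textbf{a}=(0,2,\ldots,2r)$, and finally invoke irreducibility from Theorem \ref{stronglyBNdiv}. However, your second step contains a genuine error that propagates into the third. In Osserman's formalism, which you yourself invoke, the aspects of the limit live at \emph{concentrated} multidegrees, and for this two-component, two-node curve these are $(2g-3,1)$ and $(2g-2,0)$; your multidegree $(g+r-1,g-r-1)$ is not concentrated, and the relation $L_1\otimes L_2\cong\omega_C(x+y)$ is the naive restriction formula for an honest line bundle on the nodal cover --- an object that need not exist in the limit, which is precisely why the limit-linear-series machinery is needed. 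The correct relation for the aspects, as in the paper, is $L_1\otimes L_2\cong\omega_C((g-1)(x+y))$ with both aspects of degree $2g-3$ (with the analogous twist in the $(2g-2,0)$ case). The paper then combines the compatibility inequality $a^1_i+a^2_{r-i}\geq 2g-4$ with Proposition \ref{rho-2} to force all multivanishing orders to be distinct; since they are automatically even, they differ by $2$. Your mechanism for this step, ``divisorial equivalences forced by the two nodes of the loop,'' is borrowed from the elliptic bridge of Proposition \ref{secondrel} and has no analogue here, since the two copies of $C$ are glued directly with no intermediate component. Finally, Riemann--Roch applied with the correct norm relation gives $2b^1_0=g-r-2$, hence $a^1_i=g-r-2+2i$, and the $g^r_{g+r-1}$ with multivanishing $\textbf{a}$ only emerges after removing the base locus $b^1_0(x+y)$; the case analysis by the parity of $g-r$ (which of the two concentrated multidegrees can occur) is also absent from your sketch. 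Running your step 3 with the relation as you wrote it would not reproduce this computation.

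Your treatment of $\Delta_0$ also has a gap. Membership of a generic point of $\Delta_0\subset\mm_{g-1,2}$ in $\pi_2^{-1}(\rr^r_g)$ is \emph{not} equivalent to $[C_0,x,y]$ carrying a $g^r$ with the required multivanishing: that equivalence was derived only over smooth curves, whereas $\rr^r_g$ is a closure, so at such a point one must exclude Prym limit linear series on the associated double cover, which now has four nodes and two non-compact-type loops. Moreover, Lemma \ref{bnLemma} concerns limit linear series on the specific smooth and compact-type configurations listed there; it does not apply ``along the relevant normalization'' of an irreducible nodal curve as you assert. The paper sidesteps both difficulties by degenerating further inside the boundary, via the composition $\mm_{g-1,1}\rightarrow\mm_{g-1,2}\xrightarrow{\pi_2}\rr_g$ sending $[C,p]$ to $[C\cup_p\mathbb{P}^1,x,y]$, whose image under $\pi_2$ is exactly the four-component cover already shown in Proposition \ref{testA_1} to admit no Prym limit $g^r_{2g-2}$, and then reuses that result; some such reduction to an already-analyzed degenerate cover (rather than an intrinsic Brill--Noether statement about $[C_0,x,y]$) is what your last step would need.
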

\begin{proof}
	Let $[C,x,y] \in \cM_{g,2}$ generic in a divisorial component of $\pi_2^{-1}(\rr^r_g)$. We want to understand what properties such a $[C,x,y]$ must satisfy. We consider $[C_1, x_1, y_1]$ and $[C_2, x_2, y_2]$ two copies of $[C,x,y]$. The double cover associated to $\pi_2([C,x,y])$ is obtained by glueing together $x_1$ to $y_2$ and $y_1$ to $x_2$.
	
	We assume that the double cover admits a limit $g^r_{2g-2}$ respecting the norm condition. The same argument as in Proposition \ref{testA_1} implies that the multivanishing orders of the limit linear series are considered with respect to the sequences 
	\[ 0 \leq x_1+y_1\leq \cdots \leq g(x_1+y_1)\]
	and 
		\[ 0 \leq x_2+y_2\leq \cdots \leq g(x_2+y_2)\]
	We have two possibilities for the concentrated multidegree at $C_1$: either the multidegree is $(2g-3,1)$ or it is $(2g-2,0)$. 
	
	We assume the concentrated multidegree is $(2g-3,1)$. Then the $C_1$-aspect of the limit linear series is a $g^r_{2g-3}$ and let $0\leq a^1_0 \leq \cdots \leq a^1_r \leq 2g-4$ be its multivanishing orders with respect to $D_\bullet(x_1,y_1)$. Moreover, we denote by $r^1_l$ the number of times the value $2l$ appears in the sequence $a^1_0, \ldots a^1_r$. 
	
	Because $[C_1,x_1,y_1]$ is generic inside a divisor we have from Proposition \ref{rho-2} that: 
	\[ g-1 +(r+1)(2g-3-r-g+1) + \frac{r(r+1)}{2} -\sum_{j=0}^{r}a^1_j - \sum_{l=0}^{g-2} \binom{r^1_l}{2} \geq -1 \]
	Similarly we have: 
	\[ g-1 +(r+1)(2g-3-r-g+1) + \frac{r(r+1)}{2} -\sum_{j=0}^{r}a^2_j - \sum_{l=0}^{g-2} \binom{r^2_l}{2} \geq -1 \]
	But the compatibility condition implies $a^2_{r-i} + a^1_i \geq 2g-4$. It follows that 
	\[ (r+1)(2g-4) - \sum_{j=1}^{2}\sum_{l=0}^{g-2}\binom{r^j_l}{2} \geq \sum_{i=0}^{r}(a^1_i+a^2_i) \geq (r+1)(2g-4)\]
	and implicitly $r^1_l, r^2_l \in \left\{0,1\right\}$ for every $0\leq l\leq g-2$.
	
	We denote by $L_1$ and $L_2$ the $C_1$ and $C_2$ aspects of the limit linear series. Because $\rho\text{\large(}L, D_\bullet(x,y)\text{\large)} \leq -2$ cannot be a divisorial condition above $\mathcal{M}_{g-1,2}$, see Proposition \ref{rho-2}, it follows that 
	\[ h^0\text{\large(}L_1- b^1_i(x_1+y_1) \text{\large)} = r+1-i \ \forall\ 0 \leq i\leq r \] 
	and
		\[ h^0\text{\large(}L_2- b^2_i(x_2+y_2) \text{\large)} = r+1-i \ \forall \ 0\leq i\leq r \] 
   where we denoted $b^1_i \coloneqq \frac{a^1_i}{2}$ and $b^2_i \coloneqq \frac{a^2_i}{2}$. The norm condition implies: 
   \[ L_1\otimes L_2 \cong \omega_C\text{\large(}(g-1)(x+y) \text{\large )} \]
   It follows from here that 
   \[ L_1\text{\large(}b^1_i(x+y)\text{\large)} \cong \omega_C\otimes L_2^\vee\text{\large (}(g-1-b^1_i)(x+y) \text{\large )} \]
   and from Riemann-Roch we have 
   \[ h^0\text{\Large(}C, L_2\text{\large(}-(g-1-b^1_i)(x+y)\text{\large)}\text{\Large)} = r+2-g-i+2b^1_i \]
   But we know $a^1_i + a^2_{r-i} = 2g-4$, hence $g-1-b^1_i = b^2_{r-i} + 1$ and 
   \[ h^0\text{\Large(}C, L_2\text{\large(}-(b^2_{r-i}+1)(x+y)\text{\large)}\text{\Large)} = r+2-g-i+2b^1_i \]
   For $i = 0$ we obtain $2b^1_0 = g-r-2$. Similarly $2b_0^2 = g-r-2$, from where it follows $2b^1_r = g+r-2$. Because the $a^1_i$'s are all even and different, it follows that 
   \[ a^1_i = g-r-2+2i \ \forall \ 0\leq i \leq r \]
   We remark from the above computation that the case of admissible multidegree $(2g-3,1)$ is possible only when $g-r$ is even. The case of the admissible multidegree $(2g-2,0)$ is treated similarly, and in the end we will get 
   \[ a^1_i = a^2_i = g-r-1 + 2i \]
   making it feasible only when $g-r$ is odd. 
   
   As a conclusion, the pullback of $\rr^r_g$ is the irreducible divisor $\cM^r_{g-1, g+r-1}\text{\large (}D_{\bullet}, \textbf{a}\text{\large )}$. Lastly, we consider the composition map: 
   \[ \mm_{g-1,1} \rightarrow \mm_{g-1,2} \xrightarrow{\pi_2} \rr_g \]
   where the first map sends $[C,p]$ to $[C\cup_p \mathbb{P}^1, x,y]$ where $x, y\in \mathbb{P}^1$. The fact that the boundary divisor $\Delta_0$ is not contained in $\pi_2^{-1}(\rr^r_g)$ follows from Proposition \ref{testA_1}.
\end{proof}

\section{A strongly Brill-Noether divisor in $\mm_{g,2}$} \label{stronglysection}

For $d,r> 0$, we will study $g^r_d$'s respecting certain multivanishing conditions for a chain of divisors. Let $x, y$ two points on a curve $C$ of genus $g$ and $D_\bullet(x,y)$ a sequence of divisors as follows:
\[ 0 = D_0 < D_1\coloneqq d^1_1x + d^2_1y<\cdots <D_b \coloneqq  d^1_bx + d^2_by \]
Let $\textbf{a}$ be a sequence $0 \leq a_0 \leq a_1\leq\cdots a_r\leq d$ of multivanishing orders with respect to $D_\bullet(x,y)$. For $0\leq l \leq b-1$ we denote by $r_l$ the number of $a_i$'s equal to $\mathrm{deg}(D_l)$. 

We consider the locus $\cM^r_{g,d}(D_\bullet, \textbf{a})$ parametrizing $2$-pointed curves $[C,x,y]$ admitting a $g^r_d$ with multivanishing order at least $\textbf{a}$ for the divisorial sequence $D_\bullet(x,y)$. More concretely

\begin{multline*} \cM^r_{g, d}\text{\large (}D_\bullet, \textbf{a}\text{\large )} \coloneqq \left\{ \right. [C,x,y] \in \cM_{g,2} \ | \ \exists \ L\in W^r_d(C) \ \textrm{satisfying} \ \\  h^0\text{\Large (}C,L(-D_i)\text{\Large )} \geq r+1-\#\{a_j \ | \ a_j < \deg(D_i) \} \ \forall \ 0\leq i \leq r \left.\right\}.
\end{multline*}
When the expected codimension of this locus in $\cM_{g,2}$ is $2$ or higher, then it has no divisorial component:

\begin{prop} \label{rho-2} If 
	\[ \rho(g,r,d, D_\bullet, \textbf{a}) \coloneqq g-(r+1)(g-r+d)- \sum_{j=0}^{r}(a_j-j) - \sum_{l=0}^{b-1}\binom{r_l}{2} \leq -2 \]
	then every irreducible component of $\cM_{g,d}^r(D_\bullet, \textbf{a})$ has codimension at least $2$ in $\cM_{g,2}$.
\end{prop}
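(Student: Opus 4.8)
The plan is to prove the statement by induction on $g$, using a degeneration to an elliptic tail together with the additivity of the multivanishing Brill--Noether number. It is convenient to induct on the slightly more general assertion in which $[C,x,y]$ is replaced by a curve in $\cM_{g,n}$ carrying the two multivanishing points $x,y$ (with the chain $D_\bullet(x,y)$ and sequence $\textbf{a}$) together with $n-2$ further marked points equipped with ordinary ramification profiles, and $\rho$ is the corresponding multivanishing number. Since every irreducible component of $\cM^r_{g,d}(D_\bullet,\textbf{a})$ is the image of (a component of) the proper space of limit linear series with multivanishing constructed by Osserman (\cite{Ossermandim},\cite{Ossermancompacttype}), I would fix such a component $Z$ and bound its codimension from below by degenerating its generic point into the boundary of $\mm_{g,2}$.

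For the inductive step, I would degenerate a general point of $Z$ to a curve of compact type $X\cup_{q}E$, where $E$ is a general elliptic tail attached at a generic point $q$ disjoint from $x,y$, so that $x,y$ and the divisor chain remain on the genus-$(g-1)$ aspect. A Prym-free limit $g^r_d$ then consists of an $X$-aspect $l_X$ (retaining the multivanishing along $D_\bullet(x,y)$ and acquiring a ramification profile $\beta$ at $q$) and an elliptic aspect $l_E$. Osserman's node-compatibility (\cite[Definition 2.16]{Ossermandim}) together with the additivity of the multivanishing Brill--Noether number gives
\[ \rho(g,r,d,D_\bullet,\textbf{a}) \ \ge\ \rho\bigl(l_X, D_\bullet, \textbf{a}, \beta\text{ at }q\bigr) + \rho(l_E, q), \]
and part II of Lemma \ref{bnLemma} forces $\rho(l_E,q)\ge 0$, whence $\rho(l_X,D_\bullet,\textbf{a},\beta)\le\rho(g,r,d,D_\bullet,\textbf{a})\le -2$. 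The boundary stratum swept out is $\cM_{g-1,3}\times\cM_{1,1}$, so after projecting off the one-dimensional elliptic factor the image of $\overline{Z}\cap\Delta$ in $\cM_{g-1,3}$ has dimension at least $\dim Z - 2$. By the inductive hypothesis (applied for each of the finitely many possible profiles $\beta$, all of which satisfy $\rho\le -2$) this image lies in a locus of codimension at least $2$ in $\cM_{g-1,3}$, and comparing dimensions yields $\operatorname{codim}_{\cM_{g,2}}Z \ge 2$. The base cases in small genus are supplied directly by parts I--V of Lemma \ref{bnLemma}.

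The main obstacle is that this is an \emph{upper} bound on the dimension of the locus, opposite to the lower bound $\dim Z \ge \dim\cM_{g,2}+\rho$ that degeneracy-locus theory produces automatically; proving it honestly means ruling out excess components, which is exactly where the strong Brill--Noether generality of the building blocks of the degenerate curve (\cite[Theorem 3.3]{Ossermandim}) is essential. The delicate bookkeeping is twofold: one must track how the multivanishing orders $\textbf{a}$, and in particular the correction $-\sum_l\binom{r_l}{2}$, distribute across the two aspects under the additivity inequality, and one must ensure that a generic point of $Z$ degenerates so that the $X$-aspect is genuinely generic in its stratum and that $\overline{Z}$ meets the elliptic-tail boundary divisor properly. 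I would also need the two-pointed genus-$1$ multivanishing estimate (the analogue of part III of Lemma \ref{bnLemma} in the presence of a divisor chain) to seed the induction in the event that $x,y$ are forced onto a low-genus component.
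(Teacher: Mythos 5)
Your strategy founders on the step you set aside as ``delicate bookkeeping'': you never justify that the closure $\overline{Z}$ of a divisorial component meets the elliptic-tail boundary divisor at all. Two effective divisors in $\mm_{g,2}$ need not intersect, and this is precisely the difficulty the paper's proof is organized around. The paper argues in two stages. First, specializing to $[C\cup_p\mathbb{P}^1,x,y]\in\Delta_{0,\{1,2\}}$ (both marked points on the rational bridge), strong Brill--Noether generality of $[\mathbb{P}^1,p,x,y]$ plus additivity forces the genus-$g$ aspect to satisfy $\rho(C,p,L_C)\le -2$, and Theorem 1.1 of \cite{IrreducibilityBN} then shows that no divisorial component can meet $\Delta_{0,\{1,2\}}$. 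Second --- and this is the idea your proposal is missing --- a nonempty effective divisor $D$ has nonzero class in $\mathrm{Pic}(\mm_{g,2})$, so $D$ cannot avoid both $\Delta_{0,\{1,2\}}$ and the locus $\mathrm{Im}(\pi)$, where $\pi\colon\mm_{1,3}\to\mm_{g,2}$ glues $[E,p,x,y]$ to a \emph{fixed generic} $[X,p]\in\cM_{g-1,1}$; hence $D$ must meet $\mathrm{Im}(\pi)$. Note that the paper's elliptic component carries \emph{both} marked points $x,y$, opposite to your degeneration, which keeps $x,y$ on the genus-$(g-1)$ component. With the chain $D_\bullet(x,y)$ on $E$, the condition $\rho(E,p,D_\bullet(x,y),L_E)\le -2$ becomes an explicit linear equivalence $(d^1_j-d^1_i)x+(d^2_j-d^2_i)y\equiv(d^1_j-d^1_i+d^2_j-d^2_i)p$ on $E$, and every component of such a locus in $\mm_{1,3}$ meets $\Delta_{0,\{2,3\}}$ --- which pushes $D$ back into $\Delta_{0,\{1,2\}}$, contradicting the first stage. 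This closes the argument with no induction at all; your setup, by keeping the chain on the high-genus component, never reaches a stratum where the multivanishing condition can be analyzed explicitly, and so never produces a terminal contradiction.

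Two further parts of your induction also do not hold up. The base cases are not ``supplied directly by parts I--V of Lemma \ref{bnLemma}'': those are statements about linear series with ordinary vanishing at points on generic curves (or generic points of boundary divisors), not codimension-two statements for loci defined by multivanishing along a chain $D_\bullet(x,y)$; the genus-1 two-pointed multivanishing estimate you admit to needing is exactly what the paper manufactures by hand via the linear-equivalence analysis above. And in the inductive step, even granting that $\overline{Z}$ meets $\Delta_{1,\emptyset}$, that intersection has codimension at most $2$ in $\mm_{g,2}$ and could a priori lie entirely inside $\Delta_{1,\emptyset}\cap\partial\mm_{g,2}$ (e.g.\ inside $\Delta_{1,\emptyset}\cap\Delta_0$, which has codimension exactly $2$), in which case its projection to $\mm_{g-1,3}$ lands in the boundary and the inductive hypothesis, which concerns the open locus $\cM_{g-1,3}$, says nothing about it. So even the dimension count at the heart of your inductive step needs an argument that the generic degenerate point stays of compact type with smooth components, and that argument is absent.
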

\begin{proof}
Let $[C\cup_p\mathbb{P}^1, x,y]$ an element of $\Delta_{0,\left\{1,2\right\}}$ contained in $\cM_{g,d}^r(D_\bullet, \textbf{a})$. Then $[C\cup_p\mathbb{P}^1, x,y]$ admits a limit $g^r_d$ with multivanishing orders greater or equal to $\textbf{a}$ for the sequence $D_\bullet(x,y)$. Because $[\mathbb{P}^1, p,x,y]$ is strongly Brill-Noether general and the Brill-Noether numbers are additive we have $\rho(C,p, L_C) \leq -2$. Because of Theorem 1.1 in \cite{IrreducibilityBN}, we deduce that: 
\begin{enumerate}
	\item The locus $\mm^r_{g,d}(D_\bullet, \textbf{a})$ is not equal to $\mm_{g,2}$ and 
	\item The locus $\mm^r_{g,d}(D_\bullet, \textbf{a})$ has no irreducible divisorial component intersecting $\Delta_{0,\left\{1,2\right\}}$.
\end{enumerate}
Consider now $[X,p] \in \cM_{g-1,1}$ generic and the map 
\[ \pi\colon \mm_{1,3} \rightarrow \mm_{g,2} \]
sending $[E,p,x,y]$ to $[X\cup_pE, x,y]$. 

Assume there exists $D$ a divisorial component of $\mm^r_{g,d}(D_\bullet, \textbf{a})$. Because $D\cap \Delta_{0,\left\{1,2\right\}} = \emptyset$,  we must have $\mathrm{Im}(\pi) \cap D \neq \emptyset$, otherwise $[D] = 0$ in $\mathrm{Pic}(\mm_{g,2})$. 

Let $[X\cup_pE,x,y] \in \mathrm{Im}(\pi)\cap \mm^r_{g,d}(D_\bullet, \textbf{a})$. Then it admits a limit $g^r_d$ with multivanishing sequence at least $\textbf{a}$ for $D_\bullet(x,y)$. The additivity of Brill-Noether numbers, together with the genericity of $[X,p]$ imply that 
\[ \rho(E, p, D_\bullet(x,y), L_E) \leq -2 \]
This implies that there exists $i<j$ such that 
\[ D_i(x,y) + (d-\mathrm{deg}D_i(x,y))p \equiv D_j(x,y) + (d-\mathrm{deg}D_j(x,y))p  \]
That can be rewritten 
\[ (d^1_j-d^1_i) x + (d^2_j-d^2_i)y \equiv (d^1_j-d^1_i +d^2_j-d^2_i) p \]
But, for any $a, b \in \mathbb{Z}_{\geq 0}$, every irreducible component of the locus in $\mm_{1,3}$ defined by 
\[ ax_2 +bx_3 \equiv (a+b)x_1 \]
intersects $\Delta_{0,\left\{2,3\right\}}$. This contradicts the condition $D\cap \Delta_{0,\left\{1,2\right\}} = \emptyset$. Hence, no such divisorial component $D$ exists and the conclusion follows.  

\end{proof}

Next, we restrict our attention to the case of Proposition \ref{sBNpull}; namely $g = \frac{r(r+1)}{2}-1$, the chain $D_\bullet(x,y)$ of divisors is 
\[ 0 \leq x+y\leq \cdots \leq n(x+y) \leq \cdots   \]
and the sequence $\textbf{a}$ of multivanishing orders is 
\[ \textbf{a}  = (a_0, a_1,\ldots, a_r)  = (0,2,\ldots, 2r). \]
Our goal is to prove 
 	\[\pi^*[\mm^r_{g, g+r}\text{\large (}D_{\bullet}, \textbf{a}\text{\large )}] = [\mm^r_{g,g+r}(\textbf{a})].\]
Our approach is to use intersection theory to prove this result. As such, we will need several classes in the cohomology of $C\times C\times \textrm{Pic}(C)$ and $C\times C\times C\times \textrm{Pic}(C)$. We consider the following classes 
\begin{enumerate}
	\item The class $\theta \in H^2 \text{\large(} \textrm{Pic}(C) \text{\large)} $, whose pullback to $C\times \cdots \times C \times \textrm{Pic}(C)$ will still be denoted by $\theta$. 
	\item A symplectic basis $\delta_1, \ldots, \delta_{2g}$ for $H^1(C, \mathbb{Z})\cong H^1(\textrm{Pic}^d(C), \mathbb{Z})$. Moreover, for a product $C\times \cdots \times C \times \textrm{Pic}^d(C)$, we denote by $\delta^i_{\alpha}$ the pullbacks of the symplectic basis via the projection map from $C\times \cdots \times C \times \textrm{Pic}^d(C)$ to its $i$-th entry. 
	\item Using the classes above, we define 
	\[ \gamma_{ij} \coloneqq - \sum_{i=1}^g \text{\Large(} \delta^j_s\delta^i_{g+s}-\delta^j_{g+s}\delta^i_s\text{\Large )}\]
    \item Lastly, by pulling back the class of a point in $C$ via the $i$-th projection map, we obtain the class $\eta_i$ in $H^2\text{\Large (}C\times \cdots \times C \times \textrm{Pic}^d(C)\text{\Large )}$.
\end{enumerate} 
With this notation set, we are ready to prove our result: 
 \begin{prop} \label{FTsimilar} \normalfont
 	In the notation above, we consider the pullback 
 	\[ \pi\colon \mm_{g,1}\rightarrow \mm_{g,2} \]
 	sending $[X,p]$ to $[X\cup_p\mathbb{P}^1,x,y]$ where $[\mathbb{P}^1,p,x,y]$ is the unique curve in $\cM_{0,3}$. Then we have  
 	\[\pi^*[\mm^r_{g, g+r}\text{\large (}D_{\bullet}, \textbf{a}\text{\large )}] = [\mm^r_{g,g+r}(\textbf{a})]\]
  \end{prop}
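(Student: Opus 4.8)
The plan is to use that $\pi$ is an isomorphism of $\mm_{g,1}$ onto the boundary divisor $\Delta_{0,\{1,2\}}\subset\mm_{g,2}$, obtained by attaching a fixed $3$-pointed rational curve $[\mathbb{P}^1,p,x,y]$. Under this identification $\pi^*[\mm^r_{g,g+r}(D_\bullet,\textbf{a})]$ is the class of the restriction of the strongly Brill--Noether divisor to $\Delta_{0,\{1,2\}}$. The generic point of $\Delta_{0,\{1,2\}}$ carries a generic curve $X$ of genus $g$, which does not lie in $\mm^r_{g,g+r}(\textbf{a})$; hence $\mm^r_{g,g+r}(D_\bullet,\textbf{a})$ does not contain $\Delta_{0,\{1,2\}}$ and the pullback is a genuine effective divisor on $\mm_{g,1}$. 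I would therefore first establish the set-theoretic equality $\pi^{-1}\bigl(\mm^r_{g,g+r}(D_\bullet,\textbf{a})\bigr)=\mm^r_{g,g+r}(\textbf{a})$ and then promote it to the stated identity of classes by checking that the multiplicity is one.

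For the set-theoretic equality I would work with Eisenbud--Harris limit linear series (see \cite{limitlinearbasic}) on the compact-type curve $X\cup_p\mathbb{P}^1$, with aspects $L_X$ on $X$ and $V\subseteq H^0(\mathbb{P}^1,\mathcal{O}(g+r))$ on the rational tail carrying $x,y$. Write the vanishing sequence of $V$ at the node as $(c_0,\dots,c_r)$ and that of $L_X$ as $(b_0,\dots,b_r)$, so that node compatibility reads $b_i+c_{r-i}\geq g+r$. The heart of the argument is the termwise bound $c_j\leq g-r+2j$. Indeed, the multivanishing condition gives, for each $i$, a subspace $U_i=V\cap H^0(\mathcal{O}(g+r)-i(x+y))$ of dimension at least $r+1-i$, all of whose nonzero sections vanish to order $\geq i$ at both $x$ and $y$; such a section is divisible by a degree-$2i$ form supported at $x+y$ and hence vanishes to order at most $g+r-2i$ at $p$. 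Thus the top of the vanishing sequence of $U_i$ at $p$ is $\leq g+r-2i$, and nestedness $U_i\subseteq V$ forces $c_{r-i}\leq g+r-2i$, i.e. $c_j\leq g-r+2j$. Substituting into the compatibility inequality yields $b_i\geq 2i$, so $[X,p]\in\mm^r_{g,g+r}(\textbf{a})$. Conversely, given such an $L_X$, I would glue the explicit rational aspect $V=\langle\, t^i(t-1)^i : 0\leq i\leq r\,\rangle$ (placing $x=0$, $y=1$, $p=\infty$), whose multivanishing sequence along $D_\bullet(x,y)$ is exactly $\textbf{a}$ and whose vanishing at $p$ is $g-r+2j$; then $b_i+c_{r-i}\geq 2i+(g+r-2i)=g+r$, so the pair is a limit $g^r_{g+r}$ with the required multivanishing. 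This is precisely the borderline case of Lemma \ref{bnLemma}.I, since the genus-$0$ multivanishing Brill--Noether number of these data is exactly $0$.

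To pass from supports to classes I would show that the multiplicity of $\mm^r_{g,g+r}(\textbf{a})$ in the pullback is one, equivalently that $\mm^r_{g,g+r}(D_\bullet,\textbf{a})$ meets $\Delta_{0,\{1,2\}}$ transversally at a generic point. At a generic $[X,p]\in\mm^r_{g,g+r}(\textbf{a})$ the aspect $L_X$ is the unique (refined) series with minimal vanishing $b_i=2i$; compatibility together with the bound above then forces $c_j=g-r+2j$ exactly, which pins down the rational aspect $V$ uniquely. Realizing the two loci as degeneracy loci over $C\times\mathrm{Pic}^{g+r}(C)$ and $C\times C\times\mathrm{Pic}^{g+r}(C)$ by means of the classes $\theta$, $\gamma_{ij}$ and $\eta_i$, one sees that the defining rank conditions cut out reduced schemes with matching supports, giving multiplicity one.

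The main obstacle is exactly this last transversality/multiplicity step: the set-theoretic identification is clean and essentially forced by the genus-$0$ bound, but upgrading it to an equality of divisor classes requires controlling the scheme structure of the two-point multivanishing locus along the boundary. This is where the degeneracy-locus realization and a first-order smoothing analysis in the style of \cite{Ossermandim} and \cite{Ossermancompacttype} are needed to rule out hidden excess multiplicity.
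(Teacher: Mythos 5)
Your set-theoretic analysis is correct, and it is in fact more detailed than what the paper records: the bound $c_j\leq g-r+2j$ on the vanishing at the node of the $\mathbb{P}^1$-aspect, the inequality $b_i\geq 2i$ for the $X$-aspect obtained from node compatibility, and the explicit rational aspect $\langle t^i(t-1)^i\colon 0\leq i\leq r\rangle$ for the converse direction together give the equality of supports $\pi^{-1}\bigl(\mm^r_{g,g+r}(D_\bullet,\textbf{a})\bigr)=\mm^r_{g,g+r}(\textbf{a})$. The paper dismisses this step in one sentence, so there is no objection on that side.

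The genuine gap is the multiplicity-one claim, which is the actual content of the proposition and which your proposal asserts rather than proves. Uniqueness of the limit linear series at a generic point of $\mm^r_{g,g+r}(\textbf{a})$ does not by itself imply that $\mm^r_{g,g+r}(D_\bullet,\textbf{a})$ meets $\Delta_{0,\{1,2\}}$ with multiplicity one: excess multiplicity in boundary restrictions is a scheme-theoretic phenomenon invisible to a pointwise count of limit linear series, and your sentence ``one sees that the defining rank conditions cut out reduced schemes with matching supports'' is exactly the statement that needs proof. You acknowledge this as the main obstacle, but the remedy you sketch (a first-order smoothing analysis in the style of \cite{Ossermandim}, \cite{Ossermancompacttype}) is not carried out, and it is not clear it would succeed without substantial new input. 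The paper resolves the issue by a completely different, global mechanism: for a fixed generic curve $C$ it realizes the locus $Z\subseteq C\times C\times\mathrm{Pic}^{g+r}(C)$ defined by $h^0\bigl(C,L(-i(x+y))\bigr)\geq r+1-i$ as a flag degeneracy locus, computes the intersection number $[Z]\cdot[\Delta\times\mathrm{Pic}^{g+r}(C)]$ via the Fulton--Pragacz determinantal formula (this is the long Chern class and Vandermonde computation occupying most of the proof), and verifies that it equals $n_{g,r,g+r,\textbf{a}}$, the degree of $\mathcal{G}^r_{g,g+r}(\textbf{a})$ over $\cM_g$ computed in \cite{FarkasCastel}. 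Since by \cite{IrreducibilityBN} the intersection is supported on exactly $n_{g,r,g+r,\textbf{a}}$ points, the agreement of the cohomological count (which includes all multiplicities) with the reduced enumerative count forces every local multiplicity to equal one. Without this computation, or an honest deformation-theoretic substitute for it, your argument establishes only an equality of supports, not the stated equality of divisor classes.
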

\begin{proof} Looking at limit linear series respecting the multivanishing condition, it is clear that the only divisorial component in the preimage $\pi^{-1}\mm^r_{g, g+r}\text{\large (}D_{\bullet}, \textbf{a}\text{\large )}$ is $\mm^r_{g,g+r}(\textbf{a})$. Hence the only thing we need to show is that its multiplicity is $1$. For this we restrict our attention to a unique smooth curve $C$ and consider the locus: 
	\[ Z = \left\{(x,y,L)\in C\times C\times \mathrm{Pic}^{g+r}(C) \ | \ h^0\text{\Large(}C, L\text{\large(}-i(x+y)\text{\large)}\text{\Large)}\geq r+1-i \ \forall \ 0\leq i \leq r\right\} \]
Let $\Delta$ be the diagonal of $C\times C$ and $n_{g,r,g+r, \textbf{a}}$ the degree of $\mathcal{G}^r_{g,g+r}(\textbf{a})$ over $\mathcal{M}_g$ as computed in \cite{FarkasCastel}. Because the locus $\mathcal{G}^r_{g,g+r}(\textbf{a})$ has a unique irreducible component dominating $\mathcal{M}_g$, see \cite[Theorem 1.2]{IrreducibilityBN}, the proposition follows if we prove that 
\[ [Z]\cdot [\Delta\times \mathrm{Pic}^{g+r}(C)] = n_{g,r,g+r, \textbf{a}}. \]
Let $p \in C$ be a general point and $m > g-1-r$. Then, the condition $h^0\text{\Large(}C, L\text{\large(}-i(x+y)\text{\large)}\text{\Large)} \geq r+1-i$ can be rewritten as 
\[ \mathrm{rk}\text{\Large (}H^0\text{\large(}C, L(mp)\text{\large)} \rightarrow H^0\text{\large(}C, L(mp)_{|mp+ix+iy}\text{\large)}\text{\Large )} \leq m+i. \]
This globalizes to a map of vector bundles over $C\times C\times \mathrm{Pic}^{g+r}(C)$:
\[ \pi^*\mathcal{E} \rightarrow \mu_*(\nu^*\mathcal{L}\otimes \OO_{D_i})\eqqcolon \mathcal{M}_i.\] 
Here we denoted: 
\begin{itemize}
	\item $\mathcal{L}$ a Poincar\'e bundle on $C\times \mathrm{Pic}^{g+r+m}(C)$, 
	\item $\mathcal{E}$ the pushforward of $\mathcal{L}$ to $\mathrm{Pic}^{g+r+m}(C)$, 
	\item $ \pi\colon C\times C \times \mathrm{Pic}^{g+r+m}(C) \rightarrow \mathrm{Pic}^{g+r+m}(C)$ the projection onto the third factor, 
	\item $\nu \colon C\times C\times C \times \mathrm{Pic}^{g+r+m}(C) \rightarrow C\times \mathrm{Pic}^{g+r+m}(C)$ the projection onto the first and fourth factors, 
	\item $\mu\colon C\times C\times C \times \mathrm{Pic}^{g+r+m}(C) \rightarrow C\times C \times \mathrm{Pic}^{g+r+m}(C)$ the projection onto the second, third and fourth factors,
	\item $D_i$ is the pullback to $C\times C\times C \times \mathrm{Pic}^{g+r+m}(C)$ of the divisor in $C\times C\times C$ defined as
	\[ D_i \coloneqq m\cdot \left\{p\right\}\times C\times C + i\Delta_{12} + i\Delta_{13}.\]
\end{itemize}

In particular 
\begin{align*}
	[D_i] &= m\eta_1 +(i\eta_1 +i\eta_2 +i\gamma_{12}) + (i\eta_1 +i\eta_3 +i\gamma_{13}) \\
	&= (m+2i)\eta_1 +i\eta_2 +i\eta_3 + i\gamma_{12} + i\gamma_{13}.
\end{align*}  

We have the maps of vector bundles 
\[ \pi^*\mathcal{E} \rightarrow \mathcal{M}_r \twoheadrightarrow \cdots \twoheadrightarrow \mathcal{M}_0 \]
and $Z$ the degeneracy locus. We can compute its class $[Z]$ using the Fulton-Pragacz determinantal formula for flag bundles, see \cite{FultonPragaczformula}. 

We have from \cite[Chapter VIII]{ACGC1} that 
\begin{align*}
	ch(\nu^*\mathcal{L}) =& 1 + (g+r+m) \eta_1 + \gamma_{14}-\eta_1\theta \\ 
	ch(\mathcal{O}_{D_i}) =& 1 - e^{-(m+2i)\eta_1-i\eta_2 - i\eta_3 -i\gamma_{12} -i\gamma_{13}}
\end{align*}
From these we can compute $ch(\nu^*\mathcal{L}\otimes \OO_{D_i}) = ch(\nu^*\mathcal{L})\cdot ch(\OO_{D_i})$. Via the Grothendieck-Riemann-Roch formula we have 
\[ ch(\mathcal{M}_i) = \mu_{*}\text{\Large (}\text{\large(}1+(1-g)\eta_1\text{\large)}\cdot ch(\nu^*\mathcal{L}\otimes \OO_{D_i})\text{\Large )}. \]

From this, we obtain the Chern classes of $\mathcal{M}_i$:
\begin{align*}
	ch_0(\mathcal{M}_i)\ = & \ m+2i \\
	ch_1(\mathcal{M}_i)\ = & \ i(r+1+ig-2i)(\eta_2 + \eta_3) + i(\gamma_{24}+\gamma_{34}) - i^2\gamma_{23} \\
	ch_2(\cM_i) \ = & \ -i(\eta_2+\eta_3)\theta + i^2(2i-r-1-2ig)\eta_2\eta_3 - i^2(\eta_2\gamma_{34} + \eta_3 \gamma_{24}) \\ 
	ch_3(\cM_i) \ = & \ i^2 \eta_2\eta_3\theta 
\end{align*}

We can compute the Chern classes of $\cM_i$ by knowing the Chern character and obtain: 
\begin{align*}
	c_2(\cM_i) \ = & \ i^2\text{\Large [}(r+1+ig-2i)^2 - gi^4 + 2ig +r+1-2i \text{\Large ]} \eta_2 \eta_3 + i^2(r+2+ig-3i)(\eta_2\gamma_{34}+ \eta_3\gamma_{24}) \\ &+(i-i^2)(\eta_2 +\eta_3)\theta + i^2 \gamma_{24}\gamma_{34} \\
	c_3(\cM_i) \ = & \ i^2\text{\Large [} 2(1-i)(r+1+ig-2i) - 4i +2i^2 +1 \text{\Large ]}\eta_2\eta_3\theta
\end{align*}

The Fulton-Pragacz formula gives the class of our locus $Z$ as a determinant with entries of the form $c_j(\cM_i - \mathcal{E})$. Our goal is to compute the intersection of this class with $[\Delta\times \mathrm{Pic}^{g+r}(C)] = \eta_2 + \gamma_{23} +\eta_3$. Consequently, any class that vanishes when multiplied with $\eta_2$, $\gamma_{23}$ and $\eta_3$ is irrelevant for our computation. Hence we can work with the numerically simplified classes below and still get the desired result: 
\begin{align*}
	c_0'(\cM_i) \ \coloneqq & \ c_0(\cM_i) = 1 \\ 
	c_1'(\cM_i) \ \coloneqq & \ c_1(\cM_i) = i(r+1+ig-2i) \eta +i\gamma - i^2 \gamma_{23} \\
	c_2'(\cM_i) \ \coloneqq & \ (i-i^2)\eta\theta + i^2 \gamma_{24}\gamma_{34} \\ 
	c_3'(\cM_i) \ \coloneqq & \ 0
\end{align*} 
Here we denoted $\eta \coloneqq \eta_2 + \eta_3 $ and $\gamma \coloneqq \gamma_{24} + \gamma_{34}$. Using the class \[c_t'(\cM_i-\pi^*\mathcal{E}) \coloneqq c_t'(\cM_i) \cdot c_t(-\pi^*\mathcal{E}) = c_t'(\cM_i)\cdot e^{t\theta}\]
we have 
\begin{align*}
	c_1^{(i)} \ =& \ \text{\Large [}i^2(g-2) + i(r+1)\text{\Large ]}\eta + i\gamma - i^2\gamma_{23} + \theta \\ 
	c_j^{(i)} \ = & \ \frac{\theta^j}{j!} + \text{\LARGE [} \frac{i^2(g-2) + i(r+1)}{(j-1)!} + \frac{i-i^2}{(j-2)!} \text{\LARGE ]}\eta\theta^{j-1} + \frac{i}{(j-1)!}\gamma\theta^{j-1} - \frac{i^2}{(j-1)!}\gamma_{23}\theta^{j-1} + \frac{i^2}{(j-2)!}\gamma_{24}\gamma_{34}\theta^{j-2}
\end{align*}
By the Fulton-Pragacz formula, the intersection we want to compute is equal to 
\[ (\eta_2 + \gamma_{23} +\eta_3)\cdot \mathrm{det}(c^{(r+1-i)}_{r+1-2i+j})_{1\leq i,j\leq r+1}\]
This can be rewritten as  
\[(\eta_2 + \gamma_{23} +\eta_3)\cdot \mathrm{det}(c^{(i)}_{2i-j})_{0\leq i,j\leq r}\]
We consider the $\theta$-pure part of the matrix, that is, we take as the $(i,j)$ entry the coefficient of $\theta^{2i-j}$ in $c^{(i)}_{2i-j} $. We obtain in this way the matrix $(\frac{1}{(2i-j)!})_{0\leq i,j\leq r}$. 

We recall from \cite{FarkasCastel} that  \[\mathrm{det}(\frac{1}{(b_i-j)!})_{0\leq i, j\leq r} = \frac{\prod_{l<k}(b_k-b_l)}{\prod_{j=0}^{r}b_j!}\eqqcolon V(b_0,b_1,\ldots, b_r)\]
Because $\eta\cdot \gamma_{23} = \eta^3 = \gamma_{24}\gamma_{34}\eta = 0$, there are only two ways to obtain non-zero terms when computing $\eta \cdot \mathrm{det}(c^{(i)}_{2i-j})_{0\leq i,j\leq r}$, namely: 
\begin{itemize}
	\item In the determinant, multiply $r$ summands $\frac{\theta^j}{j!}$ with a summand of the form 
	\[ \text{\LARGE [} \frac{i^2(g-2) + i(r+1)}{(j-1)!} + \frac{i-i^2}{(j-2)!} \text{\LARGE ]}\eta\theta^{j-1} \]
	\item  In the determinant, multiply $r-1$ summands $\frac{\theta^j}{j!}$ with two summands of the form 
	\[ \frac{i}{(j-1)!}\gamma\cdot \theta^{j-1} \] 
\end{itemize}  
The first possibility produces a contribution of 
\[ 2\sum_{i=0}^r\text{\Large [}i^2(g-2) +i(r+1) \text{\Large ]}\cdot V(0,2,\ldots, 2i-2, 2i-1, 2i+2,\ldots, 2r) \]
where the sequence has $2k$ as the $(k+1)$-th entry if $k\neq i$ and $2i-1$ on the $(i+1)$-th position. The contribution coming from the second possibility is 
\[ - 4\sum_{0\leq i_1<i_2\leq r}i_1i_2\cdot V(0,2,\ldots, 2i_1-1, \ldots, 2i_2-1,\ldots, 2r)\]
where the sequence has $2k$ as the $(k+1)$-th entry if $k\neq i_1$ or $i_2$; and the entry is one less than that if $k =i_1$ or $i_2$.

Next, we compute $\gamma_{23}\cdot \mathrm{det}(c^{(i)}_{2i-j})_{0\leq i,j\leq r}$. Because $\gamma_{23}\cdot \eta = \gamma^2\cdot \gamma_{23} = 0$, there are three possible ways to obtain a non-zero contribution: 
\begin{itemize}
	\item In the determinant, multiply $r$ summands $\frac{\theta^j}{j!}$ with a summand of the form  
	\[  - \frac{i^2}{(j-1)!}\gamma_{23}\theta^{j-1}\]
	\item In the determinant, multiply $r-1$ summands $\frac{\theta^j}{j!}$ with two summands of the form
	\[\frac{i}{(j-1)!}\gamma\theta^{j-1}\]
	\item In the determinant, multiply $r$ summands $\frac{\theta^j}{j!}$ with a summand of the form  
	\[  \frac{i^2}{(j-2)!}\gamma_{24}\gamma_{34}\theta^{j-2}\]
\end{itemize}
We observe immediately that the third possibility does not contribute to the result because the associated Vandermonde determinant is 0. The first possibility gives a contribution of 
\[ \sum_{i=0}^r 2gi^2\cdot V(0,2,\ldots, 2i-2, 2i-1, 2i+2,\ldots, 2r) \]
while the second possibility contributes 
\[ - 4\sum_{0\leq i_1<i_2\leq r}i_1i_2\cdot V(0,2,\ldots, 2i_1-1, \ldots, 2i_2-1,\ldots, 2r)\]
to the result. Here the sequences are considered as in the computation of $\eta \cdot \mathrm{det}(c^{(i)}_{2i-j})_{0\leq i,j\leq r}$. 

Hence, the intersection $[Z] \cdot [\Delta\times \mathrm{Pic}^{g+r+m}]$ is equal to 

\begin{align*}
	&2\sum_{i=0}^r\text{\Large [}i^2(g-2) +i(r+1) +i^2g\text{\Large ]}\cdot V(0,2,\ldots, 2i-2, 2i-1, 2i+2,\ldots, 2r) \\
	& - 8\sum_{0\leq i_1<i_2\leq r}i_1i_2\cdot V(0,2,\ldots, 2i_1-1, \ldots, 2i_2-1,\ldots, 2r)
\end{align*} 
This is just the formula for $n_{g,r,g+r,\textbf{a}}$ appearing in \cite[Equality (5)]{FarkasCastel}, hence we are done.
\end{proof}

We are now ready to prove Theorem \ref{stronglyBNdiv}. 

\textbf{Proof of Theorem \ref{stronglyBNdiv}:} The same method as in Proposition \ref{rho-2} implies that every irreducible divisorial component of $\mm^r_{g, g+r}\text{\large (}D_{\bullet}, \textbf{a}\text{\large )}$ intersects the boundary divisor $\Delta_{0,\left\{1,2\right\}}$. But Proposition \ref{FTsimilar} implies that $\mm^r_{g, g+r}\text{\large (}D_{\bullet}, \textbf{a}\text{\large )}\cap\Delta_{0,\left\{1,2\right\}}$ is reduced and irreducible. This implies irreducibility of  $\mm^r_{g, g+r}\text{\large (}D_{\bullet}, \textbf{a}\text{\large )}$. 

Using 	
\[\pi^*[\mm^r_{g, g+r}\text{\large (}D_{\bullet}, \textbf{a}\text{\large )}] = [\mm^r_{g,g+r}(\textbf{a})]\]
together with the discussion following Proposition \ref{secondrel}, we deduce $a = g+2$, $b_0 = \frac{g+1}{6}$, $b_{i,\left\{1,2\right\}} = \frac{(g-i)(g+i+1)}{2}$ and $c = \frac{(g+1)!}{g-1}\cdot 2^{g-1}\prod_{i=1}^{r}\frac{i!}{(2i)!}$. That $a_1 = a_2$ is obvious from the symmetry of the situation. 

To conclude that $ a_1 = a_2 = \frac{g^2+g+2}{8}$ we consider a generic curve $[C,y] \in \cM_{g,1}$ and take the test curve $A \coloneqq \left\{[C,x,y] \right\}_{x\in C}$. We know that $A\cdot \psi_1 = (2g-1)$, $A\cdot \psi_2 = 1$, $A\cdot \delta_{0,\left\{1,2\right\}} = 1$ while the intersection of $A$ with $\lambda$ and all other boundary classes is $0$. 

It is a consequence of Proposition \ref{FTsimilar} that 
\[ c\cdot\text{\large[}(2g-1)a_1+a_2-b_{0,\left\{1,2\right\}}\text{\large]} = \eta_2\cdot\mathrm{det}(c^{(i)}_{2i-j})_{0\leq i,j\leq r} \]
This can be rewritten as 
\begin{align*}
	c\cdot \text{\large[}2ga_1 - b_{0,\left\{1,2\right\}} \text{\large]} =& \sum_{i=0}^r\text{\large[}i^2(g-2) +i(r+1)\text{\large]}\cdot  V(0,2,\ldots, 2i-2, 2i-1, 2i+2,\ldots, 2r)- \\
	& - 2\sum_{0\leq i_1<i_2\leq r}i_1i_2\cdot V(0,2,\ldots, 2i_1-1, \ldots, 2i_2-1,\ldots, 2r)
\end{align*}
We want to prove that $a_1 = \frac{g^2+g+2}{4g(g+1)}\cdot b_{0,\left\{1,2\right\}}$. Hence, what we need to show is that 
\begin{align*}
	c\cdot \frac{g^2-g}{2(g+1)}\cdot b_{0,\left\{1,2\right\}} =& \sum_{i=0}^r\text{\large[}i^2(g-2) +i(r+1)\text{\large]}\cdot  V(0,2,\ldots, 2i-2, 2i-1, 2i+2,\ldots, 2r)- \\
	& - 2\sum_{0\leq i_1<i_2\leq r}i_1i_2\cdot V(0,2,\ldots, 2i_1-1, \ldots, 2i_2-1,\ldots, 2r)
\end{align*}

We denote $n \coloneqq g!\cdot 2^{\frac{r(r+1)}{2}}\cdot\prod_{i=1}^r\frac{i!}{(2i)!}\cdot \frac{(r-1)r^2(r+1)^2(r+2)}{16}$ and note that $b_{0,\left\{1,2\right\}} = \frac{n}{2g-2}$. We want to show that 
\begin{align*}
	\frac{1}{4}\cdot n - \frac{1}{4(g+1)}\cdot n =& \sum_{i=0}^r\text{\large[}i^2(g-2) +i(r+1)\text{\large]}\cdot  V(0,2,\ldots, 2i-2, 2i-1, 2i+2,\ldots, 2r)- \\
	& - 2\sum_{0\leq i_1<i_2\leq r}i_1i_2\cdot V(0,2,\ldots, 2i_1-1, \ldots, 2i_2-1,\ldots, 2r)
\end{align*}

Formula $(5)$ in \cite{FarkasCastel} implies 
\begin{align*}
	\frac{1}{4}\cdot n  =& \sum_{i=0}^r\text{\large[}i^2(g-1) +\frac{i(r+1)}{2}\text{\large]}\cdot  V(0,2,\ldots, 2i-2, 2i-1, 2i+2,\ldots, 2r)- \\
	& - 2\sum_{0\leq i_1<i_2\leq r}i_1i_2\cdot V(0,2,\ldots, 2i_1-1, \ldots, 2i_2-1,\ldots, 2r)
\end{align*}
Hence, we want to show that 
\[ \frac{1}{4(g+1)}\cdot n = \sum_{i=0}^r \text{\Large[}i^2-\frac{i(r+1)}{2}\text{\Large]}\cdot  V(0,2,\ldots, 2i-2, 2i-1, 2i+2,\ldots, 2r) \]
After substituting $g = \frac{r(r+1)}{2}-1$ and simplifying common terms, we are left to show 
\[ \frac{(r-1)r(r+1)(r+2)}{16}\cdot 2^{2r-1} = \sum_{i=0}^r\text{\Large[}4i^3-2i^2(r+1)\text{\Large]}\binom{2i-1}{i}\cdot (r-i+1)\binom{2r-2i+1}{r-i} \]
The formulas in the discussion following Proposition \ref{secondrel} imply the conclusion. \hfill$\square$



\section{The Kodaira dimension of $\cR_{14,2}$} 

We will now see that Theorem \ref{maintrm} is just a numerical consequence of the results in the previous sections. Furthermore, we will use the divisor $\rr^5_{15}$ to conclude Theorem \ref{Kodaira}. 

\textbf{Proof of Theorem \ref{maintrm}:}
Let $[\rr^r_g] = a\lambda - b_0'\delta_0'- b_0''\delta_0''-b_0^{\mathrm{ram}}\delta_0^{\mathrm{ram}} - \sum\limits_{i=1}^{\floor{\frac{g}{2}}} (b_i\delta_i + b_{i:g-i} \delta_{i:g-i} + b_{g-i}\delta_{g-i})$. We know from Proposition \ref{testA_g-1} and Proposition \ref{testA_1} that
\[ a-12b_0'+b_{g-1}= a-4b_0'' - 4b_0^{\mathrm{ram}} + b_1 = 0 \]
Furthermore, we get as a consequence of Proposition \ref{Picardmap}, Proposition \ref{zeropullback} and Proposition \ref{weierstrasspull} that
\[ b_i = \frac{(i-1)(g-i)}{g-2}b_{g-1} + \frac{(g-i-1)(g-i)}{(g-1)(g-2)}b_1 \ \textrm{for} \ 1\leq i\leq g-1 \]
and
\[ b_{g-2} = 30b_0'-3a\]

Proposition \ref{secondrel} implies $\frac{a}{b_0'} = 6 + \frac{6}{g}$. Similarly, Proposition \ref{sBNpull} and Theorem \ref{stronglyBNdiv} imply $\frac{a}{b_0''} = \frac{8g+8}{g^2-g+2}$. 

These relations are sufficient to conclude Theorem \ref{maintrm}. \hfill $\square$

Theorem \ref{maintrm} checks out for $r = 3, g= 6$. In fact, for the divisor $\rr^3_6 = \overline{\mathcal{Q}}$ on $\rr_{6}$ appearing in \cite{FarkasGrushevsky} we have: 
\[ \overline{\mathcal{Q}} = 7\lambda - \delta_0' -4\delta_0''-\frac{3}{2}\delta_0^{\textrm{ram}} - 15\delta_1 - 5\delta_5- 14\delta_2- 9\delta_4 -12 \delta_3 - \cdots \]
where the missing coefficients are still unknown. 

The values 
\[ \frac{a}{b_0'} = 6+ \frac{6}{g}, \ \frac{a}{b_0''} = \frac{8g+8}{g^2-g+2} \ \textrm{and} \ \frac{a}{b_0^{\textrm{ram}}} = 4 + \frac{4}{g} \]
are referred as the slopes of the divisor and are usually important in understanding the birational geometry of moduli spaces. Divisors having small slopes are particularly useful for expressing the canonical divisor as a sum of an effective and an ample divisor. This is enough to conclude that the respective moduli space is of general type. We can use the Prym-Brill-Noether divisor to prove Theorem \ref{Kodaira}. 

\textbf{Proof of Theorem \ref{Kodaira}:} We consider the map 
\[ \chi_{14,2}\colon \rr_{14,2}\rightarrow \mm_{28} \] 
sending a tuple $[C,x+y,\eta]$ to the associated double cover $\widetilde{C}$, and the map 
\[ \iota_{14,2}\colon \rr_{14,2}\rightarrow \Delta_0^{\mathrm{ram}}\subseteq \rr_{15}\]
sending $[C,x+y,\eta]$ to $[C\cup_{x,y}R, \eta, \OO_R(1)]$, that is, glueing a rational component at the points $x, y$ and considering a line bundle that restricts to $\eta$ on $C$ and to $\OO_R(1)$ on $R$. 

On $\rr_{14,2}$ we consider the following divisors (up to multiplication with a constant): 
\begin{enumerate}
	\item The pullback to $\rr_{14,2}$ of the Brill-Noether divisor on $\mm_{14}$: 
	\[ [\mathcal{BN}_{14}] = 34\lambda - 5\delta'_0 - 10\delta_0^{\mathrm{ram}}-\cdots \]
	\item The pullback to $\rr_{14,2}$ of the Gieseker-Petri divisor, see \cite[]{Farkosz}, $\overline{\mathcal{GP}}^3_{28,24}$, whose pullback has class 
	\[\chi^*_{14,2}[\overline{\mathcal{GP}}^3_{28,24}] = 19289\psi + 308624\lambda - 47784\delta_0' - 62470\delta_0^{\mathrm{ram}} - \cdots \]
	\item The pullback to $\rr_{14,2}$ of the Prym-Brill-Noether divisor on $\rr_{15}$: 
	\[ \iota_{14,2}^*[\rr^5_{15}] = 15\psi +128\lambda - 20\delta_0' - 30\delta_0^{\mathrm{ram}}-\cdots \]
\end{enumerate}
	We consider the linear combination 
\[ \frac{4603}{63570}\cdot [\mathcal{BN}_{14}] + \frac{1}{50856}\cdot \chi^*_{14,2}[\overline{\mathcal{GP}}^3_{28,24}] + \frac{683}{19560}\cdot  \iota_{14,2}^*[\rr^5_{15}] \]
which is equal to 
\[\text{\Large(}\frac{19289}{50856}+\frac{15\cdot683}{19560}\text{\Large)}\psi +13\lambda - 2\delta_0' - 3\delta_0^{\mathrm{ram}}-\cdots \]
Because $\frac{19289}{50856}+\frac{15\cdot683}{19560} < 1$ and $\psi$ is big and nef, the conclusion follows as in \cite{BudKodPrym}. All the other slope conditions are obviously satisfied. \hfill $\square$

\section{Curves on Nikulin surfaces}

A polarized Nikulin surface of genus $g$ is a smooth polarized $K3$ surface $(S, H)$ equipped with a double cover $f\colon \widetilde{S} \rightarrow S$ branched along eight disjoint rational curves $N_1, \ldots, N_8$ such that $N_i\cdot H = 0$ for all $1\leq i \leq 8$. 

Denoting by $e \in \textrm{Pic}(S)$ the class defined by the relation 
\[ e^{\otimes 2} \cong \OO_S(\sum_{i = 1}^8 N_i)\]
we consider the Nikulin lattice 
\[ \mathfrak{R} \coloneqq \langle e, \OO_S(N_1), \ldots, \OO_S(N_8)\rangle.\]

Out of this lattice, we obtain a primitive embedding $j\colon \Lambda_g \coloneqq \mathbb{Z}\cdot[H] \oplus \mathfrak{R} \rightarrow \textrm{Pic}(S)$ and use it to study Nikulin surfaces. As a consequence, the moduli space $\mathcal{F}^{\mathfrak{R}}_g$ of genus $g$ Nikulin surfaces is an $11$-dimensional subspace of the moduli space of polarized $K3$ surfaces, see \cite{Dolgachev-Nikulin} and \cite{Nikulin}. We can endow this with a tautological $\mathbb{P}^g$ bundle 
\[\mathcal{P}^{\mathfrak{R}}_g \coloneqq \left\{ [S,j\colon \Lambda_g\hookrightarrow \textrm{Pic}(S), C] \ | \ C \in |H| \ \textrm{is a smooth curve of genus} \ g \right\}. \]
This bundle comes equipped with a projection map
\[ \mathcal{P}^{\mathfrak{R}}_g \rightarrow \cR_g \]
sending a tuple $[S,j,C]$ to $[C, e_C \coloneqq e\otimes \OO_C]$. Moving a generic such curve $C$ in a pencil of $|H|$, we obtain a test curve $\Xi_g$ whose intersection with all divisorial classes of $\rr_g$ is known, see \cite[Proposition 1.4]{FarVerNikulintheta}. We have 
\[ \Xi_g\cdot \lambda = g+1, \ \Xi_g\cdot \delta_0'=6g+2, \ \Xi_g\cdot \delta_0^{\textrm{ram}} = 8 \]
while the intersection with all other boundary divisors is $0$. 

For $g = \frac{r(r+1)}{2}$, we immediately compute  
\[ \Xi_g \cdot [\rr^r_g] = c\cdot (1-\frac{g}{3}). \]
As soon as $r \geq 3$, this number is negative, and hence the Nikulin locus is contained in the Prym-Brill-Noether divisor. This concludes Corollary \ref{nikulin}.
\bibliography{main}

\begin{thebibliography}{FGSMV14}

\bibitem[ACGH85]{ACGC1}
E.~Arbarello, M.~Cornalba, P.~Griffiths, and J.~Harris.
\newblock {\em Geometry of algebraic curves. {V}olume {I}}.
\newblock Grundlehren der Mathematischen Wissenschaften. Springer, Heidelberg,
  1985.

\bibitem[BCF04]{Casa}
E.~Ballico, C.~Casagrande, and C.~Fontanari.
\newblock Moduli of {P}rym curves.
\newblock {\em Documenta Mathematica}, \textbf{9}:265--281, 2004.

\bibitem[Bea77]{Beau77}
A.~Beauville.
\newblock Vari\'et\'es de {P}rym et jacobiennes interm\'ediaires.
\newblock {\em Annales scientifiques de l'\'Ecole Normale Sup\'erieure}, 4e
  s{\'e}rie, \textbf{10}:309--391, 1977.

\bibitem[Ber87]{Bertram}
A.~Bertram.
\newblock An existence theorem for {P}rym special divisors.
\newblock {\em Inventiones mathematicae}, \textbf{90}:669--671, 1987.

\bibitem[Bru16]{Bruns}
G.~Bruns.
\newblock {$\overline{\mathcal{R}}_{15}$} is of general type.
\newblock {\em Algebra {\normalfont \&} Number Theory}, \textbf{10}:1949--1964,
  2016.

\bibitem[Bud21]{Bud-adm}
A.~Bud.
\newblock A {H}urwitz divisor on the moduli of {P}rym curves.
\newblock {\em Geometriae Dedicata}, \textbf{216}, 2021.

\bibitem[Bud22a]{BudPrymIrr}
A.~Bud.
\newblock Irreducibility of a universal {P}rym-{B}rill-{N}oether locus.
\newblock {\em International Mathematics Research Notices},
  \textbf{2023}:10174--10180, 2022.

\bibitem[Bud22b]{Bud-newdiv}
A.~Bud.
\newblock Prym enumerative geometry and a {H}urwitz divisor in
  {$\overline{\mathcal{R}}_{2i}$}.
\newblock {\em Preprint, arXiv:2201.12009}, 2022.

\bibitem[Bud24]{BudKodPrym}
A.~Bud.
\newblock The birational geometry of {$\overline{\mathcal{R}}_{g,2}$} and
  {P}rym-canonical divisorial strata.
\newblock {\em Selecta Mathematica}, \textbf{30}, 2024.

\bibitem[Cat83]{Catanese}
F.~Catanese.
\newblock On the rationality of certain moduli spaces related to curves of
  genus $4$.
\newblock {\em Springer Lecture Notes in Mathematics}, \textbf{1008}:30--50,
  1983.

\bibitem[CLRW20]{tropicalPBN-4authors}
S.~Creech, Y.~Len, C.~Ritter, and D.~Wu.
\newblock {Prym–{B}rill–{N}oether {L}oci of {S}pecial {C}urves}.
\newblock {\em International Mathematics Research Notices},
  \textbf{2022}:2688--2728, 2020.

\bibitem[DCP95]{DeConciniPragacz}
C.~De~Concini and P.~Pragacz.
\newblock On the class of {B}rill-{N}oether loci for {P}rym varieties.
\newblock {\em Mathematische Annalen}, \textbf{302}:687--698, 1995.

\bibitem[Deb00]{DebarreLefschetz}
O.~Debarre.
\newblock Th\'eor\`emes de {L}efschetz pour les lieux de d\'eg\'en\'erescence.
\newblock {\em Bulletin de la Soci\'et\'e Math\'ematique de France},
  \textbf{128}:283--308, 2000.

\bibitem[DLC23]{Lelli-Chiesa-lowgenus}
S.~D'Evanghelista and M.~Lelli-Chiesa.
\newblock Double covers of curves on {N}ikulin surfaces.
\newblock {\em Preprint, arXiv:2305.06128, to appear in AMS Contemporary
  Mathematics}, 2023.

\bibitem[Dol85]{Dolgachev}
I.~Dolgachev.
\newblock Rationality of fields of invariants.
\newblock {\em Algebraic Geometry Bowdoin 1985, Proceedings of Symphosia in
  Pure Mathematics}, \textbf{46}-Part 2:3--16, 1985.

\bibitem[Dol96]{Dolgachev-Nikulin}
I.~Dolgachev.
\newblock Mirror symmetry for lattice polarized {K3} surfaces.
\newblock {\em Journal of Mathematical Sciences}, \textbf{81}:2599--2630, 1996.

\bibitem[Don84]{DonagiA5}
R.~Donagi.
\newblock The unirationality of $\mathcal{A}_5$.
\newblock {\em Annals of Mathematics}, \textbf{119}:269--307, 1984.

\bibitem[EH86]{limitlinearbasic}
D.~Eisenbud and J.~Harris.
\newblock Limit linear series: {B}asic theory.
\newblock {\em Inventiones mathematicae}, \textbf{85}:337--372, 1986.

\bibitem[EH87]{EisenbudHarrisg>23}
D.~Eisenbud and J.~Harris.
\newblock The {K}odaira dimension of the moduli space of curves of genus $\geq$
  23.
\newblock {\em Inventiones mathematicae}, \textbf{90}:359--387, 1987.

\bibitem[EH89]{IrreducibilityBN}
D.~Eisenbud and J.~Harris.
\newblock Irreducibility of some families of linear series with {Brill-Noether}
  number. {I}.
\newblock {\em Annales scientifiques de l'\'Ecole Normale Sup\'erieure},
  \textbf{22}:33--53, 1989.

\bibitem[Far00]{FarkasThesis}
G.~Farkas.
\newblock The birational geometry of the moduli space of curves.
\newblock {\em Academisch Proefschrift, Universitet van Amsterdam}, 2000.

\bibitem[Far09]{Farkosz}
G.~Farkas.
\newblock Koszul divisors on moduli spaces of curves.
\newblock {\em American Journal of Mathematics}, \textbf{131}:819--867, 2009.

\bibitem[FGSMV14]{FarkasGrushevsky}
G.~Farkas, S.~Grushevsky, R.~Salvati~Manni, and A.~Verra.
\newblock {Singularities of theta divisors and the geometry of $A_5$}.
\newblock {\em Journal of the European Mathematical Society},
  \textbf{16}:1817--1848, 2014.

\bibitem[FJP20]{FarPayneJensen}
G.~Farkas, D.~Jensen, and S.~Payne.
\newblock The {K}odaira dimensions of {$\overline{\mathcal{M}}_{22}$} and
  {$\overline{\mathcal{M}}_{23}$}.
\newblock {\em Preprint, arXiv:2005.00622}, 2020.

\bibitem[FJP24]{FarR13}
G.~Farkas, D.~Jensen, and S.~Payne.
\newblock The non-abelian {B}rill-{N}oether divisor on
  {$\overline{\mathcal{M}}_{13}$} and the {K}odaira dimension of
  {$\overline{\mathcal{R}}_{13}$}.
\newblock {\em Geometry {\normalfont \&} Topology}, \textbf{28}:803--866, 2024.

\bibitem[FL81]{FultonLazarsfConnected}
W.~Fulton and R.~Lazarsfeld.
\newblock {On the connectedness of degeneracy loci and special divisors}.
\newblock {\em Acta Mathematica}, \textbf{146}:271 -- 283, 1981.

\bibitem[FL10]{FarLud}
G.~Farkas and K.~Ludwig.
\newblock The {K}odaira dimension of the moduli space of {P}rym varieties.
\newblock {\em Journal of the European Mathematical Society},
  \textbf{12}:755--795, 2010.

\bibitem[FT16]{FarkasCastel}
G.~Farkas and N.~Tarasca.
\newblock Pointed {C}astelnuovo numbers.
\newblock {\em Mathematical Research Letters}, \textbf{23}:389--404, 2016.

\bibitem[Ful92]{FultonPragaczformula}
W.~Fulton.
\newblock {Flags, Schubert polynomials, degeneracy loci, and determinantal
  formulas}.
\newblock {\em Duke Mathematical Journal}, \textbf{65}:381 -- 420, 1992.

\bibitem[FV12]{FarVerNikulintheta}
G.~Farkas and A.~Verra.
\newblock Moduli of theta-characteristics via nikulin surfaces.
\newblock {\em Mathematische Annalen}, \textbf{354}:465--496, 2012.

\bibitem[FV16]{FarVerNikulin}
G.~Farkas and A.~Verra.
\newblock Prym varieties and moduli of polarized {N}ikulin surfaces.
\newblock {\em Advances in Mathematics}, \textbf{290}:314--328, 2016.

\bibitem[Gro61]{EGA2}
A.~Grothendieck.
\newblock \'el\'ements de g\'eom\'etrie alg\'ebrique : {II.} {\'etude} globale
  \'el\'ementaire de quelques classes de morphismes.
\newblock {\em Publications Math\'ematiques de l'IH\'ES}, \textbf{8}:5--222,
  1961.

\bibitem[Har84]{KodevenHarris1984}
J.~Harris.
\newblock On the {K}odaira dimension of the moduli space of curves, {II}. {T}he
  even-genus case.
\newblock {\em Inventiones mathematicae}, \textbf{75}:437--466, 1984.

\bibitem[HM82]{KodMg}
J.~Harris and D.~Mumford.
\newblock On the {K}odaira dimension of the moduli space of curves.
\newblock {\em Inventiones Mathematicae}, \textbf{67}:23--88, 1982.

\bibitem[IGS08]{Izadi}
E.~Izadi, M.~Lo Giudice, and G.~Sankaran.
\newblock The moduli space of \'etale double covers of genus 5 curves is
  unirational.
\newblock {\em Pacific Journal of Mathematics}, \textbf{239}:39--52, 2008.

\bibitem[JP21]{JensenPaynesurvey}
D.~Jensen and S.~Payne.
\newblock Recent {D}evelopments in {B}rill-{N}oether {T}heory.
\newblock {\em Preprint, arXiv:2111.00351}, 2021.

\bibitem[LCKV23]{Lelli-Chiesa-uniruled}
M.~Lelli-Chiesa, A.~L. Knutsen, and A.~Verra.
\newblock ({U}ni)rational parametrizations of $\mathcal{R}_{g,2}$,
  $\mathcal{R}_{g,4}$ and $\mathcal{R}_{g,6}$ in low genera.
\newblock {\em Preprint, arXiv:2310.16635}, 2023.

\bibitem[LU21]{tropicalPBN-Len_Ulirsch}
Y.~Len and M.~Ulirsch.
\newblock {Skeletons of {P}rym varieties and {B}rill–{N}oether theory}.
\newblock {\em Algebra {$\&$} Number Theory}, \textbf{15}:785--820, 2021.

\bibitem[MM83]{MoriMukai}
S.~Mori and S.~Mukai.
\newblock The uniruledness of the moduli space of curves of genus $11$.
\newblock {\em Springer Lecture Notes in Mathematics}, \textbf{1016}:334--353,
  1983.

\bibitem[Mum74]{MumfordPrym}
D.~Mumford.
\newblock Prym varieties {I}.
\newblock {\em Contributions to analysis}, pages 325--350, 1974.

\bibitem[Oss16]{Ossermandim}
B.~Osserman.
\newblock Dimension counts for limit linear series on curves not of compact
  type.
\newblock {\em Mathematische Zeitschrift}, \textbf{284}:69--93, 2016.

\bibitem[Oss19]{Ossermancompacttype}
B.~Osserman.
\newblock Limit linear series for curves not of compact type.
\newblock {\em Journal für die reine und angewandte Mathematik (Crelles
  Journal)}, \textbf{2019}:57--88, 2019.

\bibitem[P{\'e}r21]{Carlos}
C.~M. P{\'e}rez.
\newblock Prym curves with a vanishing theta null.
\newblock {\em Preprint, arXiv:2102.03435}, 2021.

\bibitem[Sch17]{SchwarzPrym}
Irene Schwarz.
\newblock Brill–{N}oether theory for cyclic covers.
\newblock {\em Journal of Pure and Applied Algebra}, \textbf{221}:2420--2430,
  2017.

\bibitem[Ste98]{Steffen}
F.~Steffen.
\newblock A generalized principal ideal theorem with an application to
  {B}rill-{N}oether theory.
\newblock {\em Inventiones mathematicae}, \textbf{132}:73--89, 1998.

\bibitem[Ver84]{VerraA5}
A.~Verra.
\newblock A short proof of the unirationality of $\mathcal{A}_5$.
\newblock {\em Indagationes Mathematicae (Proceedings)}, \textbf{87}:339--355,
  1984.

\bibitem[Ver08]{VerraA4}
A.~Verra.
\newblock On the universal principally polarized abelian variety of dimension
  $4$.
\newblock {\em In: Curves and abelian varieties (Athens, Georgia, 2007)
  Contemporary Mathematics}, \textbf{345}:253--274, 2008.

\bibitem[vGS07]{Nikulin}
B.~van Geemen and A.~Sarti.
\newblock Nikulin involutions on k3 surfaces.
\newblock {\em Mathematische Zeitschrift}, \textbf{255}:731 -- 753, 2007.

\bibitem[Wel85]{Welters}
G.~Welters.
\newblock A theorem of {G}ieseker-{P}etri type for {P}rym varieties.
\newblock {\em Annales Scientifiques de l'École Normale Supérieure},
  \textbf{18}:671--683, 1985.

\end{thebibliography}
\bibliographystyle{alpha}
\Addresses
\end{document}